\def\PSH{\mathcal{PSH}}
\def\SH{\mathcal{SH}}
\def\ddc{dd^c}
\def\osc{\mathrm{Osc}}
\numberwithin{equation}{section}
\def\1{\mathds{1}}
\title[Continuity of solutions to complex Hessian equations]{Continuity of solutions to complex Hessian equations on compact Hermitian manifolds}
\author{Yuetong Fang}
\address{Université d'Angers, CNRS, LAREMA, SFR MATHSTIC, F-49000 Angers, France}
\email{yuetong.fang@univ-angers.fr}
\keywords{Hermitian Geometry, Monge--Amp\`ere equations and complex Hessian equations, weak solution, a priori estimate, stability estimate}
\subjclass[2020]{32W20, 32U05, 32Q15, 35A23}
\begin{document}
\begin{abstract}
    Let $(X,\omega)$ be a compact Hermitian manifold of dimension $n$. We derive an $L^\infty$-estimate for bounded solutions to the complex $m$-th Hessian equations on $X$, assuming a positive right-hand side in the Orlicz space $L^{\frac{n}{m}}(\log L)^n(h\circ\log \circ \log L)^n$, where the associated weight satisfies Ko{\l}odziej's Condition. Building upon this estimate, we then establish the existence of continuous solutions to the complex Hessian equation under the prescribed assumptions.
\end{abstract}
\maketitle

\newcommand{\A}{\mathbb{A}}
\newcommand{\B}{\mathbb{B}}
\newcommand{\C}{\mathbb{C}}
\newcommand{\G}{\mathbb{G}}

\newcommand{\Gm}{\mathbb{G}_\mathrm{m}}

\newcommand{\N}{\mathbb{N}}
\renewcommand{\P}{\mathbb{P}}
 \newcommand{\Q}{\mathbb{Q}}
 \newcommand{\R}{\mathbb{R}}
 \newcommand{\Z}{\mathbb{Z}}
\newcommand{\T}{\mathbb{T}}

\newcommand{\poll}{ł}
\newcommand{\swea}{å}

\newtheorem{theorem}{Theorem}[section]
\newtheorem{thm}[theorem]{Theorem}
\newtheorem{lemma}[theorem]{Lemma}
\newtheorem{lem}[theorem]{Lemma}
\newtheorem{prop}[theorem]{Proposition}
\newtheorem{coro}[theorem]{Corollary}
\newtheorem{corollary}[theorem]{Corollary}
\newtheorem*{Theorem A}{Theorem A}
\newtheorem*{Theorem B}{Theorem B}
\newtheorem*{Lemma A}{Lemma A}
\newtheorem*{Theorem C}{Theorem C}

\theoremstyle{definition}
\newtheorem{definition}[theorem]{Definition}
\newtheorem{defi}[theorem]{Definition}
\newtheorem{example}[theorem]{Example}
\newtheorem{exa}[theorem]{Example}
\newtheorem{claim}[theorem]{Claim}
\newtheorem{remark}[theorem]{Remark}
\newtheorem*{ackn}{Acknowledgment}

\section*{Introduction}
Let $(X,\omega)$ be a compact complex Hermitian manifold of dimension $n$ equipped with a Hermitian metric $\omega$. Fix $0< m \le n$ and let $dV_X$ be a smooth volume form. The problem of interest in this paper is to find a constant $c>0$ and continuous $(\omega,m)$-subharmonic solutions to the following complex $m$-Hessian equation
\begin{equation} \tag{Hessian} \label{Equation: Hessian}
     (\omega+d d^c u)^m \wedge \omega^{n-m} = c e^{\lambda u} f dV_X, \lambda\geq 0,
\end{equation}
under the assumption that the densities $f^{n/m}$ belong to an Orlicz space $L^{\chi}$ with the weight $\chi$ satisfying Ko\l odziej's condition, which will be explained later.

When $m=1$, this simplifies to the Poisson equation, while when $m=n$, it reduces to the Monge--Ampère equation. Yau’s solution \cite{Yau78} of the Calabi Conjecture has motivated many subsequent studies on Monge--Ampère equations over the past fifty years. As presented by Błocki \cite{Błocki2012Lecture}, using the simplifications due to Kazdan and to Aubin--Bourguignon, Yau's a priori estimate can be extended to cases where the densities $f$ belong to $L^p$ for $p>n$. One of the breakthrough was achieved by Ko\l odziej \cite{Kolodziej1998}, who applied pluripotential theory to obtain an $L^{\infty}$-estimate for Monge--Amp\`ere equations when $f$ belongs to some Orlicz space, with the associated weight specifically satisfying \hyperref[Condition K]{Condition (K)}. As remarked by Ko\l odziej \cite{Kolodziej1998}, such an a priori estimate is "almost sharp".

This integrability condition on densities $f$ can be extended in the context of complex Hessian equations. We say that a convex weight $\chi$ satisfies Ko\l odziej's condition (\hyperref[Condition K]{Condition (K)}) if it is increasing and \[\chi(t) \underset{t \to +\infty}{\sim}  t (\log (t+1))^n \left(h \circ \log \circ \log (t+3)\right)^n,\] where the function
$h:\R^+ \rightarrow \R^+$ satisfying  $\int^{+\infty} h^{-1}(t)dt <+\infty.$
In this paper, we focus on the case where $f^{n/m} \in L^{\chi}$.
We refer the reader to Section~\ref{Sec:Preliminaries} for more detailed discussions on Orlicz spaces and \hyperref[Condition K]{Condition (K)}.

The study of weak solutions to complex Monge--Ampère equations plays an important role in the analysis of Ricci flows (see \cite{Tosatti_2015}). On compact Kähler manifolds, one can study the modulus of continuity for solutions to complex Monge–Ampère equations. Hölder regularity for such equations with densities in $L^p$ was established in \cite{DDG+14}, and has been further generalized to the case where the densities belong to certain Orlicz spaces, in \cite{guo2021moduluscontinuitysolutionscomplex} via a PDE approach, and in \cite{GGZ25} using pluripotential theory.

To prove the existence of continuous solution to the complex Monge--Amp\`ere equation, a key ingredient is the $L^{\infty}$-estimate. Various techniques have been developed to obtain $L^{\infty}$-estimate for Monge--Amp\`ere equations on compact manifolds, including Yau's \cite{Yau78} original proof, which employs Moser's iteration scheme. Later, Kołodziej \cite{Kolodziej1998} developed $L^{\infty}$-estimates using properties of capacity and established an optimal integrability condition for the density $f$ (see also~\cite{EGZ08,DP2010} for related generalizations). Inspired by the work of Chen--Cheng \cite{chencheng2021cscK1} on constant scalar curvature Kähler metrics, a very different approach was introduced by Guo--Phong--Tong \cite{GPT23}, by comparing the given equation to an auxiliary complex Monge--Ampère equation. This PDE technique has since been generalized in \cite{GP24, qiao2024sharpmathrmlinftyestimatesfully}. An efficient approach was developed by Guedj--Lu \cite{GL21, GL21a}, using properties of envelopes, which can be adapted to the case where $f \in L \log^p L$ with $ p>n$, as noted in \cite[Section 2]{GL21a}. More recently, they generalized this result to Orlicz spaces satisfying \hyperref[Condition K]{Condition (K)} on compact Kähler manifolds in \cite{gl2025}, using a new approach that reduces Ko{\l}odziej's criterion to Yau's theorem.

 On the other hand, complex $m$-Hessian equations are natural generalizations of Monge--Amp\`ere equations. B\l ocki \cite{Blocki05} established potential theory for $m$-Hessian equations in domains of $\C^n$. Later, Dinew--Ko\l odziej \cite{DK14} obtained a priori estimates and proved a stability result for weak solutions of the complex Hessian equation in domains of $\C^n$ and compact K\"ahler manifolds. The modulus of continuity of solutions to complex Hessian equations has been studied in \cite{Nguyen_2014,Cha16} with densities $f\in L^p,p>n/m$, and has been extended recently in \cite{AC24} to the cases where $f \in L^{n/m}\log^p(L),p>2n$. On compact Hermitian manifolds, Kołodziej and Nguyen proved the existence of weak solutions with a nonnegative right-hand side belonging to $L^p$, $p>n/m$, in \cite{Ko_odziej_2016}, while the corresponding result for Monge–Ampère equations was established in \cite{KN15weaksol}.

The purpose of this paper is to extend the above results to prove the continuity of bounded solutions to the complex Hessian equation. We begin by establishing a subsolution lemma for the Monge--Amp\`ere equation, generalizing Lemma 2.1 of \cite{GL21} to the case where $f \in L^{\chi}$, with $\chi$ satisfying \hyperref[Condition K]{Condition (K)}. The lemma is stated as follows:

\begin{Lemma A}[Subsolution Lemma] \hypertarget{LemA}{}
Assume $\chi$ satisfies \hyperref[Condition K]{Condition (K)}. Then 
there exist a uniform constant $s=s(\chi)>0$ such that for all $0\leq g\in L^{\chi}$, we can find $\psi\in \PSH(X,\omega)$, $-1\leq \psi \leq 0$, satisfying
  \[
  (\omega +\ddc \psi)^n \geq  s(\chi) \frac{g}{\|g\|_{\chi}}  dV_X.
  \]  
\end{Lemma A}

We obtain the Subsolution Lemma by gluing the solutions to the Dirichlet problem in local context:
\begin{equation} \label{Prob:Dirch}
    \begin{aligned}  
   (\ddc  u)^n &= \frac{f }{ \| f\|_{\chi} } dV_X \text{, in }\Omega;\\
   u &= 0 \text{, on }\partial \Omega.
   \end{aligned}
\end{equation}

Compared with the case where the densities $f \in L^p$, working with Orlicz norms is more involved, as it requires controlling the Orlicz norm of the solution $u$ associated to the convex conjugate of $\chi$. To simplify the argument, we adapt an approach from the recent article by Guedj–Lu \cite{gl2025} to derive a local $L^{\infty}$-estimate for problem~\ref{Prob:Dirch}.

As a consequence of \hyperlink{LemA}{Lemma A}, we obtain a uniform estimate for the complex Hessian equation on a Hermitian manifold, where the densities $f $ satisfy Ko{\l}odziej's optimal condition.

\begin{Theorem B} [$L^{\infty}$-estimate] \hypertarget{ThmB}{}
Let $\varphi$ be a bounded $(\omega,m)$-subharmonic function satisfying
\begin{equation} \tag{Hess} \label{ean:Hess}
(\omega+ \ddc \varphi)^m \wedge \omega^{n-m} \le f d V_X,
\end{equation}
where $0\le f^{n/m} \in L^{\chi}$ with the weight $\chi$ satisfying \hyperref[Condition K]{Condition (K)}. Then
\[
\osc_X (\varphi) \le C,
\]
where $C$ depends on $\omega, n, m$ and $\left\| f^{\frac{n}{m} }\right\|_{\chi}$.
\end{Theorem B}

By generalizing an idea in \cite{guedj2023degeneratecomplexhessianequations}, we demonstrate that the $L^{\infty}$-estimate for complex Hessian equations can be reduced to the corresponding estimate for an associated Monge--Amp\`ere equation:
\begin{equation}
    (\omega + \ddc \varphi)^n \le f dV_X,
\end{equation}
where $f \in L^{\chi}$, and $\chi$ satisfies \hyperref[Condition K]{Condition (K)}. To obtain the $L^{\infty}$-estimate for the Monge--Ampère equation, our main tool is the domination principle.
The key novelty of our approach lies in constructing an appropriate subsolution.  Our approach avoids potential computational difficulties associated with Orlicz norms and presents a different method compared to Ko\l odziej's original proof. 

Once the $L^{\infty}$-estimate is obtained, we can derive the existence and regularity of bounded solutions. This leads us to the second main result of the study:

\begin{Theorem C} \hypertarget{ThmC}{}
Let $0 \le f^{n/m} \in L^{\chi}$, with the weight function $\chi$ satisfying \hyperref[Condition K]{Condition (K)}.
Then there exist a constant $c >0$ and a solution $u \in \SH_m(X, \omega)\cap \mathcal{C}^0(X)$ satisfying the complex Hessian equation
\[ (\omega+d d^c u)^m \wedge \omega^{n-m} = c e^{\lambda u} f dV_X,\; \lambda\geq 0.
\]
Moreover, all bounded solutions to the above complex Hessian equation are continuous.
\end{Theorem C}

To prove the first statement of \hyperlink{ThmC}{Theorem C}, an essential component is to establish a stability theorem (Theorem~\ref{Thm:stab}). For $f \in L^p$, with $p>n/m$, such a stability result was proven in \cite{Ko_odziej_2016}. In the case where $f\in L^{\chi}$ and the weight function $\chi $ satisfies \hyperref[Condition K]{Condition (K)}, the continuity of solutions to Monge–Ampère equations was studied in \cite{KN2021continuous} by adapting the method in \cite[Theorem 3.1]{KN2019Stability}. We point out that our approach to the stability estimate is fundamentally different and remains of interest in the case where the densities $f\in L^p$, as discussed in Remark~\ref{rem: Lpstability}. Our main tool will be the Domination Principle and the subsolution  (\hyperlink{LemA}{Lemma A}). Having established the stability estimate (Theorem~\ref{Thm:stab}), we approximate the density $f$ by the sequence of bounded functions $\min(f,j)$. We show that the corresponding solutions $u_j $ converge uniformly to an $(\omega,m)$-subharmonic function $u$ solving the equation \eqref{Equation: Hessian}, thereby obtaining a continuous solution. The continuity of all bounded solutions follows from the Domination Principle (Proposition~\ref{prop: DP}).

\subsection*{Organization of the paper}The paper is organized as follows. Section~\ref{Sec:Preliminaries} gives preliminary material on $(\omega,m)$-subharmonic functions, as well as key concepts of Orlicz spaces. A local $L^{\infty}$-estimate is established, and Lemma A and \hyperlink{ThmB}{Theorem B} are proved in Section~\ref{Sec:unifest}. In section~\ref{sec:weaksol}, we establish a stability estimate and subsequently investigate the continuity of bounded solutions, leading to the proof of \hyperlink{ThmC}{Theorem C}.

\begin{ackn}
  The author thanks her supervisor, Hoang-Chinh Lu, for suggesting the problem, valuable discussions during the preparation of the work, and useful comments that improved the presentation. This research is part of a PhD program funded by the PhD scholarship CSC-202308070110, and is partially supported by the projects Centre Henri Lebesgue ANR-11-LABX-0020-01 and the KRIS project of Fondation Charles Defforey.   
\end{ackn}

\section{Preliminaries} \label{Sec:Preliminaries}
In the whole paper, we let $(X,\omega)$ denote a compact Hermitian manifold of complex dimension $n\in \mathbb{N}^{*}$, equipped with a Hermitian form $\omega$. We denote by $dV_X$ a smooth volume form on $X$. We use differential operators $d = \partial + \bar{\partial}$, and $d^c = i (\bar{\partial}- \partial)$, so that $\ddc = 2i \partial \bar{\partial}$.

\subsection{\texorpdfstring{$(\omega,m)$}{}-subharmonic functions}

\subsubsection{Definition}
We recall the definition of $(\omega,m)$-subharmonic functions following \cite{GuNguyen2018} and briefly review some results related to both $\omega$-plurisubharmonic and $(\omega,m)$-subharmonic functions that will be used later.

Fix an integer $1 \le m \le n$. Fix $\Omega $ an open set in $\C^n$. Given a hermitian metric $\omega$ on $\Omega$, a real $(1,1)$-form $\alpha$ on $X$ is called $m$-positive with respect to $\omega$ if at all points in $X$,
\[
\alpha^{k} \wedge \omega^{n-k} \ge 0, \forall k =1, \cdots, m.
\]
A $\mathcal{C}^2(\Omega)$ function $u : \Omega \rightarrow  \R$ is called 
harmonic with respect to $\omega$ if $\ddc u \wedge \omega^{n-1}=0$ at all points in $\Omega$.

\begin{definition}
A function $u:\Omega \rightarrow \{ -\infty\} \cup \R$ is subharmonic with respect to $\omega$ if:
\begin{enumerate}[label=(\alph*)]
\item $u$ is upper semicontinuous and $u \in L^1_{loc}(\Omega)$;
\item for every relatively compact open set $D \Subset \Omega$ and every function $h\in \mathcal{C}^0(D)$ that is harmonic with respect to $\omega$ on $D$ the following implication holds:
\[
u \le h \text{ on }\partial D \implies u \le h \text{ in }D.
\]
\end{enumerate}
\end{definition}

\begin{definition}
A function $\varphi : \Omega \rightarrow  \{ -\infty\} \cup \R$ is quasi-subharmonic with respect to $\omega$ if locally $\varphi = u + \rho$, where $u$ is subharmonic with respect to $\omega $ and $\rho$ is smooth.

A function $\varphi$ is $\alpha$-subharmonic with respect to $\omega$ if $\varphi$ is quasi-subharmonic with respect to $\omega$ and $(\alpha + \ddc \varphi) \wedge \omega^{n-1}\ge 0$ in the sense of distributions.
\end{definition}
The positive cone $\Gamma_m(\omega)$ associated with the metric $\omega$ is defined by 
\[
\left\{  \gamma \text{ real }(1,1) \text{-form}: \gamma^k\wedge \omega^{n-k}>0, k=1,\cdots,m \right\}.
\]
It follows from Gårding's inequality\cite{gar59} that if $\gamma_0, \gamma_1, \cdots, \gamma_{m-1}\in \Gamma_{m}(\omega)$, then 
\[
\gamma_0 \wedge\gamma_1\wedge \cdots \wedge\gamma_{m-1} \wedge\omega^{n-m} >0. 
\]
One can write $\widetilde{\omega}= \gamma_1\wedge \cdots \wedge\gamma_{m-1} \wedge\omega$, and it is a strictly positive $(n-1,n-1)$-form on $\Omega$.
\begin{definition}
A function $\varphi: \Omega \rightarrow [ -\infty, + \infty)$ is called $(\alpha,m)$-subharmonic with respect to $\omega$ if $\varphi$ is $\alpha$-subharmonic with respect to $\widetilde{\omega}$ in $\Omega$ for all 
$\widetilde{\omega}$ of the form $\widetilde{\omega}^{n-1}= \gamma_1\wedge \cdots \wedge\gamma_{m-1} \wedge\omega$, where $\gamma_1, \cdots,\gamma_{m-1} \in \Gamma_m(\omega)$.

A function $u : X \rightarrow \mathbb{R} \cup \{- \infty \} $ is called $(\alpha,m)$-subharmonic on $X$ if $u$ is $(\alpha,m)$-subharmonic on each local chart $U$ of $X$.
\end{definition}
The set of all locally integrable functions on $U$ which are $(\alpha,m)$-subharmonic with respect to $\omega$ in $U$ is denoted by $\SH_{\omega,m}(U,\alpha)$. In many contexts, such as \cite{guedj2023degeneratecomplexhessianequations}, the forms $\omega$ and $\alpha$ may differ.

However, in this paper, we focus on the set $\SH_{\omega,m}(U,\omega)$. To simplify the notations, we denote by $\SH_m(U,\omega)$ the set of all $(\omega,m)$-subharmonic functions with respect to $\omega$ in $U$. The set of all $\omega$-plurisubharmonic functions on $U$ is denoted by $\PSH(U, \omega)=\SH_n(U,\omega)$.

For notational convenience, we denote $\omega_{u} \coloneqq \omega + \ddc u$.

\begin{remark}
  By Gårding's inequality\cite{gar59}, if $u \in \mathcal{C}^2(X)$, then $u$ is $(\omega,m)$-subharmonic with respect to $\omega$ on $X$ if and only if the associated form $\omega _u$ belongs to the closure of $\Gamma_m(\omega) $.
\end{remark}
It is well known that when $n=m$, the set $\PSH(X,\omega)$ is a closed subset of $L^r(X)$, for all $r \ge 1$. For $(\omega,m)$-subharmonic functions, the following $L^1$-compactness result was proved in \cite[lemma 3.3]{Ko_odziej_2016}.

\begin{lem} \label{lem:comp}
    Let $u\in \SH_m(X,\omega)$ be normalized by $\sup_X u =0$. Then there exists a uniform constant $A>0 $ depending only on $X, \omega$ such that
    \[
    \int_X |u|\omega^n \le A.
    \]
\end{lem}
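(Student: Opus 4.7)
The strategy is to exploit that, in the definition of $(\omega,m)$-subharmonicity, one may choose $\gamma_1 = \cdots = \gamma_{m-1} = \omega \in \Gamma_m(\omega)$; this choice yields $\widetilde{\omega}^{n-1} = \omega^{n-1}$ and therefore
\[
(\omega + \ddc u) \wedge \omega^{n-1} \ge 0
\]
as currents on $X$. Equivalently, $u$ is quasi-subharmonic for the formal Laplacian $\Delta_\omega$ associated with pairing by $\omega^{n-1}$, with a lower bound on $\Delta_\omega u$ depending only on $\omega$. Once this reduction is in hand, the statement becomes the classical $L^1$-compactness of quasi-subharmonic functions normalized by $\sup = 0$, and no further $(\omega,m)$-pluripotential input is needed.

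Since $u \le 0$ on $X$, bounding $\int_X |u|\,\omega^n$ is equivalent to bounding $-\int_X u\,\omega^n$ from above. I would fix a finite atlas $\{(U_\alpha, z_\alpha)\}$ of $X$ on which $\omega$ is comparable to the Euclidean Kähler form, so that the inequality above translates into $\Delta u \ge -C_\alpha$ on each chart and $u + C_\alpha |z_\alpha|^2$ is classically subharmonic on $U_\alpha$. By upper semicontinuity of $u$ and compactness of $X$, there exists $p_0 \in X$ with $u(p_0) = 0$; the sub-mean value inequality applied to $u + C_\alpha |z_\alpha|^2$ on a small Euclidean ball $B(p_0, r_0)$ contained in a chart around $p_0$ then gives
\[
\int_{B(p_0, r_0)} |u|\, d\lambda \le A_0
\]
for uniform constants $r_0, A_0 > 0$ depending only on the atlas.

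To propagate this local bound to all of $X$, I would fix a finite cover of $X$ by balls $B_1, \ldots, B_N$ of uniform radius, whose doubles lie in charts and whose adjacency graph is connected, and iterate: a bound $\int_{B_j} |u|\,d\lambda \le M_j$ forces, by Chebyshev's inequality, the existence of points in $B_j \cap B_{j+1}$ at which $u$ is controlled, and the sub-mean value inequality at such a point on a slightly larger ball engulfing $B_{j+1}$ transfers the bound to $B_{j+1}$. After finitely many steps one obtains the global estimate, with $A$ depending only on $X$ and $\omega$. The main, rather minor, obstacle is the bookkeeping in the chain step: one must choose the radii and overlaps so that Chebyshev actually produces a point inside each overlap and the constants accumulate only a bounded number of times. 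The Hermitian (as opposed to Kähler) structure plays no essential role here, since the discrepancy between $(\omega+\ddc u)\wedge\omega^{n-1}$ and the Euclidean Laplacian is a smooth lower-order term absorbed into the constants $C_\alpha$.
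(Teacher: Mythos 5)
Your opening reduction is correct and is indeed the standard starting point: choosing $\gamma_1=\cdots=\gamma_{m-1}=\omega$ in the definition gives $\SH_m(X,\omega)\subset \SH_1(X,\omega)$, i.e. $(\omega+\ddc u)\wedge\omega^{n-1}\ge 0$ as currents (the paper itself uses this inclusion later, and for the lemma it simply cites \cite[Lemma 3.3]{Ko_odziej_2016} rather than giving a proof). The gap comes immediately afterwards, when you claim that on a chart where $\omega$ is comparable to the Euclidean form this inequality ``translates into $\Delta u\ge -C_\alpha$'' for the Euclidean Laplacian, so that $u+C_\alpha|z_\alpha|^2$ is classically subharmonic. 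For $m<n$ this is false. The inequality $(\omega+\ddc u)\wedge\omega^{n-1}\ge 0$ only bounds from below the trace of $\ddc u$ taken with respect to $\omega$; it gives no pointwise control of the trace with respect to a different Hermitian form, because the complex Hessian of an $(\omega,m)$-subharmonic function with $m<n$ can have arbitrarily large negative eigenvalues compensated in the $\omega$-trace by large positive ones. Concretely, if $H=\mathrm{diag}(M,-K)$ and $A=\mathrm{diag}(1+\epsilon,1-\epsilon)$, then $\mathrm{tr}(AH)\ge -n$ is compatible with $\mathrm{tr}(H)\to-\infty$; comparability of the two traces requires a lower bound on $\ddc u$ as a form, which is exactly the quasi-plurisubharmonic case $m=n$. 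So the discrepancy between $\ddc u\wedge\omega^{n-1}$ and the Euclidean Laplacian of $u$ is a genuinely second-order term, not a ``smooth lower-order term absorbed into the constants,'' and the sub-mean value inequality on which your Chebyshev chaining relies is not available.

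What does survive is that $v:=-u\ge 0$ is a supersolution of the uniformly elliptic operator $Lw:= n\,\ddc w\wedge\omega^{n-1}/\omega^n$, which has smooth coefficients, with $Lv\le n$. To repair the argument you must replace the Euclidean sub-mean value inequality by an estimate attached to $L$ itself: for instance the weak Harnack inequality for uniformly elliptic operators with bounded coefficients (Gilbarg--Trudinger, Theorem 8.18), which holds with exponent $1$ and therefore combines with your chaining and the point $p_0$ where $u(p_0)=0$ to give the global $L^1$ bound; alternatively, one can argue as in the source the paper cites, via a Green's function for the Chern Laplacian (conveniently after replacing $\omega$ by a Gauduchon metric in its conformal class). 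Your covering/chaining scheme is the standard one and is fine once this elliptic input is supplied, but as written the crucial local subharmonicity step does not hold.
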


\subsubsection{Domination principle}
Analogous to the Monge--Ampère operators, the complex Hessian operators also satisfy a domination principle. The domination principle for Hessian operator associated with continuous $(\omega,m)$-subharmonic functions is established by Guedj--Lu in \cite[Section 2]{guedj2023degeneratecomplexhessianequations}. Following the demonstration that the Hessian operators for bounded functions are well-defined, we show the domination principle associated with bounded $(\omega,m)$-sh functions here.
\begin{lem} [Maximum principle] \label{Lem:maxp}
Assume that $u$ and $v$ are bounded $(\omega,m)$-subharmonic functions. Then
 \[
 (\omega + \ddc (\max(u,v)))^m \wedge \omega^{n-m} \ge \1_{\{ u >v \} } (\omega+\ddc u)^m \wedge \omega^{n-m} + \1_{\{ v\ge u \} } (\omega+\ddc v)^m \wedge \omega^{n-m}.
 \]
 In particular, if $u \le v$, then,
 \[
 \1_{\{ v= u \} } (\omega+\ddc u)^m \wedge \omega^{n-m} \le \1_{\{ v= u \} } (\omega+\ddc v)^m \wedge \omega^{n-m}
 \]
\end{lem}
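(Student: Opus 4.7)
The plan is to adapt the classical Bedford--Taylor argument for the maximum principle to bounded $(\omega,m)$-subharmonic functions, building on the foundational work in \cite{guedj2023degeneratecomplexhessianequations} where the continuous case and the key analytic tools (well-posedness of the Hessian operator on bounded $(\omega,m)$-sh functions, locality on open sets, continuity under decreasing sequences) are established. A first observation is that $\max(u,v)$ is again bounded $(\omega,m)$-sh, so the left-hand side makes sense.

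For each $\epsilon > 0$, introduce $w_\epsilon := \max(u, v+\epsilon)$, which decreases pointwise to $w := \max(u,v)$ as $\epsilon \downarrow 0$. On the open sets $\{u > v+\epsilon\}$ and $\{u < v+\epsilon\}$ (made genuinely open by first approximating $u,v$ from above by continuous $(\omega,m)$-sh functions and passing to the limit), $w_\epsilon$ coincides with $u$ and with $v+\epsilon$ respectively, and the locality principle for the Hessian measure yields
\[
\omega_{w_\epsilon}^m \wedge \omega^{n-m} \geq \1_{\{u > v+\epsilon\}} \omega_u^m \wedge \omega^{n-m} + \1_{\{u < v+\epsilon\}} \omega_v^m \wedge \omega^{n-m},
\]
the two indicator sets being disjoint. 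Sending $\epsilon \downarrow 0$, the left-hand side converges weakly to $\omega_w^m \wedge \omega^{n-m}$ by continuity of the Hessian operator along decreasing sequences. On the right, $\{u > v+\epsilon\}$ increases to $\{u > v\}$ while $\{u < v+\epsilon\}$ decreases to $\{u \leq v\}$, so monotone convergence handles each term separately; since weak convergence of positive Radon measures preserves inequalities against nonnegative continuous test functions, this yields the first claim.

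For the ``in particular'' statement, apply the inequality just established with $u$ and $v$ swapped, under the assumption $u \leq v$: then $w = v$ everywhere, $\{u \geq v\} = \{u = v\}$, and the bound reads
\[
\omega_v^m \wedge \omega^{n-m} \geq \1_{\{v > u\}} \omega_v^m \wedge \omega^{n-m} + \1_{\{u = v\}} \omega_u^m \wedge \omega^{n-m},
\]
which rearranges (using $\{v \leq u\} = \{u=v\}$) to the desired $\1_{\{u=v\}} \omega_u^m \wedge \omega^{n-m} \leq \1_{\{u=v\}} \omega_v^m \wedge \omega^{n-m}$. The main technical obstacle I anticipate lies in the locality step when $u, v$ are only upper semicontinuous, as the set $\{u > v+\epsilon\}$ need not be open in that regularity. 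This requires a careful use of the approximation from above by continuous $(\omega,m)$-sh functions together with continuity of the Hessian operator under decreasing limits, arranged as a double-limit argument where the continuous case is applied first and the original bounded case is recovered by taking the decreasing limit of the approximants.
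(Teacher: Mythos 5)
Your overall strategy is the same as the paper's: establish the inequality for continuous $(\omega,m)$-sh functions via the $\epsilon$-shift $w_\epsilon=\max(u,v+\epsilon)$ and locality on open sets, then reach bounded $u,v$ by approximation from above and continuity of the Hessian operator along decreasing sequences (the paper simply quotes the continuous case from \cite[Lemma 1.10]{guedj2023degeneratecomplexhessianequations} and then approximates). Several of your steps are fine as written: the limit $\epsilon\downarrow 0$ (bound the right-hand side below by the fixed measure $\1_{\{u>v+\epsilon_0\}}\omega_u^m\wedge\omega^{n-m}+\1_{\{v\ge u\}}\omega_v^m\wedge\omega^{n-m}$, use that weak limits dominate a fixed positive minorant, then let $\epsilon_0\downarrow 0$ by monotone convergence), and the deduction of the ``in particular'' statement by applying the inequality with the roles of $u$ and $v$ exchanged are both correct.

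The genuine gap is the parenthetical claim that the sets $\{u>v+\epsilon\}$, $\{u<v+\epsilon\}$ are ``made genuinely open by first approximating $u,v$ from above by continuous $(\omega,m)$-sh functions and passing to the limit.'' That double limit does not go through with only locality on open sets and continuity along decreasing sequences. If $u_k\downarrow u$, $v_k\downarrow v$ are continuous approximants, the continuous case gives a lower bound by $\1_{\{u_k>v_k+\epsilon\}}\omega_{u_k}^m\wedge\omega^{n-m}+\1_{\{u_k<v_k+\epsilon\}}\omega_{v_k}^m\wedge\omega^{n-m}$, where both the sets and the measures move with $k$: the sets are not monotone in $k$, and if you freeze the set (e.g.\ replace it by $\{u>v_k+\epsilon\}\subset\{u_k>v_k+\epsilon\}$) it is a strict superlevel set of an upper semicontinuous function, hence not open and not exhaustible from inside by open sets up to $\omega_u^m\wedge\omega^{n-m}$-negligible error; since $\1_E\,\mu_k\not\to\1_E\,\mu$ for weakly convergent measures and general Borel $E$, the limit in $k$ cannot be taken as described. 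The missing ingredient is the quasicontinuity of $(\omega,m)$-sh functions together with the uniform smallness of Hessian measures of uniformly bounded potentials on sets of small capacity (equivalently, locality in the plurifine topology, or the convergence theorems for $f_k\,\omega_{\phi_k}^m\wedge\omega^{n-m}$ with quasi-continuous weights) — precisely the Bedford--Taylor-type machinery developed in \cite{kolodziej2023complexhessianmeasuresrespect}, which is what the paper's two-line proof implicitly invokes. Either import those convergence results explicitly at this step, or quote the continuous case and carry out the bounded case through that capacity toolkit; without one of these, the locality step for merely bounded $u,v$ fails.
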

In the case where $u$ and $v$ are continuous, the above lemma is proven in \cite[Lemma 1.10]{guedj2023degeneratecomplexhessianequations}. When $u$ is bounded, thanks to the continuity of complex Hessian operators (\cite{kolodziej2023complexhessianmeasuresrespect}), we can approximate $u$ by a sequence of decreasing smooth $(\omega,m)$-subharmonic functions.

The comparison principle for $u,v \in \SH_m(\Omega) \cap L^{\infty}(\Omega)$ is introduced in \cite[Section 6]{kolodziej2023complexhessianmeasuresrespect}.
\begin{prop} [Comparison principle] \label{prop:CP}
    Let $u,v$ be bounded $m$-subharmonic functions with respect to $\omega$ and such that $\liminf_{z \rightarrow \partial\Omega}u(z)-v(z) \ge0$. Assume \[
     (\ddc v)^m \wedge \omega^{n-m}\ge(\ddc u)^m \wedge \omega^{n-m} \text{ in } \Omega.\] Then it follows that $u \ge v $ in $\Omega$.
\end{prop}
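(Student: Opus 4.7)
The plan is to argue by contradiction along the classical Bedford--Taylor scheme, with the Hermitian adjustments carried out in detail in \cite[Section 6]{kolodziej2023complexhessianmeasuresrespect}. Suppose that $\{v>u\}$ has positive Lebesgue measure. The boundary hypothesis $\liminf_{z\to\partial\Omega}(u-v)(z)\geq 0$ guarantees that for every sufficiently small $\delta>0$, the open set $U_\delta := \{v>u+\delta\}$ is non-empty and relatively compact in $\Omega$.

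To force strict positivity in the Hessian measure of $v$, I would perturb $v$ by a strictly $m$-subharmonic function on $\overline{\Omega}$. Concretely, fix a ball $B\supset\overline{\Omega}$ and a large constant $R$ so that $\rho(z) := |z|^2 - R$ satisfies $\rho<0$ on $\overline{\Omega}$. For small $\epsilon>0$, set $v_\epsilon := v + \epsilon\rho$; this function is still bounded and $m$-subharmonic with $v_\epsilon\leq v$, and a Newton-type expansion gives
\[
(\ddc v_\epsilon)^m \wedge \omega^{n-m} \geq (\ddc v)^m \wedge \omega^{n-m} + m\epsilon\,\ddc\rho \wedge (\ddc v)^{m-1} \wedge \omega^{n-m}.
\]
For $\epsilon$ small enough, $\tilde U := \{v_\epsilon > u + \delta/2\}$ is still non-empty and $\tilde U\Subset\Omega$. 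The function $w := \max(u,\, v_\epsilon - \delta/2)$ is bounded $m$-sh, equals $u$ in a neighborhood of $\partial\tilde U$, and equals $v_\epsilon - \delta/2$ on $\tilde U$. By the local analogue of the maximum principle (Lemma~\ref{Lem:maxp} applied with trivial background form),
\[
(\ddc w)^m \wedge \omega^{n-m} \;=\; (\ddc v_\epsilon)^m \wedge \omega^{n-m} \quad \text{on }\tilde U.
\]

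The next step is the Bedford--Taylor integration by parts for bounded $m$-sh functions in the Hermitian setting (the technical heart of \cite[Section 6]{kolodziej2023complexhessianmeasuresrespect}). Since $w-u$ is compactly supported in $\tilde U\Subset\Omega$, the integrals $\int_{\tilde U}(\ddc w)^m \wedge \omega^{n-m}$ and $\int_{\tilde U}(\ddc u)^m \wedge \omega^{n-m}$ agree up to torsion terms. Combining this with the previous two displays and the hypothesis $(\ddc v)^m\wedge\omega^{n-m}\geq(\ddc u)^m\wedge\omega^{n-m}$ leads to
\[
m\epsilon \int_{\tilde U} \ddc\rho \wedge (\ddc v)^{m-1} \wedge \omega^{n-m} \;\leq\; \mathcal{T}(u,v_\epsilon,\omega),
\]
where $\mathcal{T}$ gathers the torsion contributions. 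The main obstacle is precisely to control $\mathcal{T}$: in the Kähler case $d\omega=0$ makes it vanish and the contradiction is immediate, since the left-hand side is bounded below by a positive multiple of $\epsilon$ depending on $\|v\|_\infty,\omega$ and the measure of $\tilde U$. In the general Hermitian case, a finer analysis—carried out in \cite{kolodziej2023complexhessianmeasuresrespect}—is required to show that the torsion terms can be absorbed into the strictly positive $\epsilon$-contribution on the left, via an appropriate balance between $\delta$, $\epsilon$ and the uniform bounds on $u,v$. Once this absorption is achieved, the contradiction closes and yields $u\geq v$ throughout $\Omega$.
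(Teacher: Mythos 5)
The paper does not prove this proposition at all: it simply invokes \cite[Corollary 6.2]{kolodziej2023complexhessianmeasuresrespect}, where the comparison principle for bounded $m$-subharmonic functions with respect to a Hermitian $\omega$ is established. Your proposal attempts to reconstruct that proof, but as written it has a genuine gap: the single step that makes the Hermitian case hard --- showing that the torsion terms $\mathcal{T}(u,v_\epsilon,\omega)$ arising from $d\omega\neq 0$ in the integration by parts can be absorbed into the strictly positive $\epsilon$-contribution --- is exactly the point you leave to the reference (``a finer analysis---carried out in \cite{kolodziej2023complexhessianmeasuresrespect}---is required''). Since everything before that is the standard Bedford--Taylor scheme, and everything after it is immediate, deferring this absorption means the proposal does not actually prove the statement; it reproduces the easy part of the argument and cites the hard part. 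If the intent is to rely on the reference, the honest formulation is the paper's: quote Corollary 6.2 directly.

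There are also two technical points that would need repair even granting the torsion control. First, $u$ and $v$ are merely bounded and upper semicontinuous, so the sublevel-type sets $\{v>u+\delta\}$ and $\tilde U=\{v_\epsilon>u+\delta/2\}$ need not be open (the difference of two usc functions is neither usc nor lsc); your use of locality of the Hessian operator on $\tilde U$, and the claim that $w=\max(u,v_\epsilon-\delta/2)$ ``equals $u$ in a neighborhood of $\partial\tilde U$'', are not justified without the quasi-continuity/approximation machinery (or working with two nested levels and the plurifine locality of the operator, as in the Bedford--Taylor theory and its Hermitian extension). Second, the inequality $(\ddc v_\epsilon)^m\wedge\omega^{n-m}\geq(\ddc v)^m\wedge\omega^{n-m}+m\epsilon\,\ddc\rho\wedge(\ddc v)^{m-1}\wedge\omega^{n-m}$ for merely bounded $v$ presupposes that the mixed Hessian measures are well defined and multilinear in this weak setting, which again is part of the content of \cite{kolodziej2023complexhessianmeasuresrespect} rather than something you may assume for free.
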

\begin{proof}
    See \cite[Corollary 6.2]{kolodziej2023complexhessianmeasuresrespect}.
\end{proof}

We will need a global version of domination principle:
\begin{prop}[Domination principle] \label{prop: DP}
Assume that $u, v \in \PSH(X, \omega) \cap L^{\infty}(X)$ satisfy
\[
1_{\{u<v\}}\left(\omega+d d^c u\right)^m \wedge \omega^{n-m} \leq c 1_{\{u<v\}}\left(\omega+d d^c v\right)^m \wedge \omega^{n-m},
\]
for some constant $0 \le c<1$. Then $u \ge v$.
\end{prop}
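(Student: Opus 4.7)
The plan is to argue by contradiction: assume $\sup_X(v-u)>0$, so that $E:=\{u<v\}$ has positive Lebesgue measure and, for some $\eta>0$, the set $E_\eta:=\{u<v-\eta\}$ is non-negligible.

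Setting $w:=\max(u,v)\in \PSH(X,\omega)\cap L^\infty(X)$, the Maximum Principle (Lemma~\ref{Lem:maxp}) applied to $u$ and $v$ gives
\[
(\omega_w)^m\wedge\omega^{n-m}\ge \1_{\{u>v\}}(\omega_u)^m\wedge\omega^{n-m}+\1_{\{v\ge u\}}(\omega_v)^m\wedge\omega^{n-m}.
\]
Applied instead to the pairs $(u,w)$ and $(v,w)$ (both of which satisfy the hypothesis of the diagonal part of Lemma~\ref{Lem:maxp}), one obtains
$\1_{\{u=v\}}(\omega_u)^m\wedge\omega^{n-m}\le \1_{\{u=v\}}(\omega_w)^m\wedge\omega^{n-m}$ and the analogous inequality for $v$.

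In the K\"ahler case, Stokes' theorem gives $\int_X(\omega_w)^m\wedge\omega^{n-m}=\int_X(\omega_u)^m\wedge\omega^{n-m}$; integrating the Maxp inequality over $X$, rearranging, and using the above bounds on $\{u=v\}$ yields
\[
\int_{\{v>u\}}\bigl[(\omega_v)^m-(\omega_u)^m\bigr]\wedge\omega^{n-m}\le 0.
\]
Combined with the hypothesis $(\omega_u)^m\wedge\omega^{n-m}\le c(\omega_v)^m\wedge\omega^{n-m}$ on $\{u<v\}$ and $c<1$, this forces $\int_{\{v>u\}}(\omega_v)^m\wedge\omega^{n-m}=0$, and then by the hypothesis also $\int_{\{v>u\}}(\omega_u)^m\wedge\omega^{n-m}=0$. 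A local application of the Comparison Principle (Proposition~\ref{prop:CP}) in a coordinate chart $\Omega$ meeting $E_\eta$, comparing the local representatives of $u$ and $v-\delta$ for $\delta\in(0,\eta)$, then yields $\tilde u\ge\widetilde{v-\delta}$ on $\Omega$, contradicting $u<v-\eta$ on $\Omega\cap E_\eta$.

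The main obstacle is the mass-comparison step on a compact Hermitian, non-K\"ahler manifold: since $\omega$ is not $d$-closed, Stokes' theorem for $\omega^{n-m}$ produces correction terms involving $d\omega$ and $dd^c\omega$. To recover the bound $\int_X(\omega_w)^m\wedge\omega^{n-m}\le\int_X(\omega_u)^m\wedge\omega^{n-m}+o(1)$ I would follow the integration-by-parts scheme of \cite{guedj2023degeneratecomplexhessianequations}: approximate $u,v,w$ by decreasing smooth sequences (continuity of the Hessian operator, from \cite{kolodziej2023complexhessianmeasuresrespect}), apply Stokes in the smooth setting, control the non-closed corrections using $\|u\|_\infty,\|v\|_\infty$ together with the uniform mass bound of Lemma~\ref{lem:comp}, and pass to the limit. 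The residual error is absorbed by running the argument with the family of envelopes $\max(u,v-\epsilon)$ in place of $w$ and letting $\epsilon\to 0^+$, exploiting the strict gap $c<1$.
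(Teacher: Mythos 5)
There is a genuine gap, and it is exactly at the point you flag as "the main obstacle": the global mass-comparison step does not survive the passage from K\"ahler to Hermitian. On a compact Hermitian manifold the total masses $\int_X(\omega+dd^c u)^m\wedge\omega^{n-m}$ genuinely depend on $u$, and the Stokes corrections coming from $d\omega\neq 0$ (terms of the type $\int_X u\, dd^c(\omega^{n-m})\wedge(\omega+dd^c u)^{k}\wedge\dots$) are of the same order of magnitude as the masses themselves: Lemma~\ref{lem:comp} only bounds them by a constant depending on $\|u\|_\infty$, it does not make them $o(1)$, and there is no mechanism by which the factor $c<1$ — which acts only on the measure restricted to $\{u<v\}$, a set that may carry arbitrarily little mass — could absorb an $O(1)$ global error. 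Replacing $w=\max(u,v)$ by $\max(u,v-\epsilon)$ and letting $\epsilon\to 0^+$ does not change this, since the correction terms do not shrink with $\epsilon$. So the inequality $\int_{\{v>u\}}\bigl[(\omega+dd^cv)^m-(\omega+dd^cu)^m\bigr]\wedge\omega^{n-m}\le 0$, which drives your argument, is not available here. A secondary problem is your final step: even granting that the mass of $(\omega+dd^cv)^m\wedge\omega^{n-m}$ vanishes on $\{u<v\}$, you cannot invoke the local comparison principle (Proposition~\ref{prop:CP}) on an arbitrary coordinate chart meeting $\{u<v-\eta\}$, because that principle requires the boundary inequality $\liminf_{z\to\partial\Omega}(u-(v-\delta))\ge 0$, and nothing forces the set $\{u<v-\delta\}$ to stay away from $\partial\Omega$.

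The paper avoids global masses altogether and argues purely locally at a near-minimum point of $u-av$: one fixes $a$ with $c<a^m<1$, picks a small ball $B$ around a limit of a minimizing sequence, and a smooth strictly plurisubharmonic $\rho$ with $\rho=0$ on $\partial B$, $dd^c\rho\le\omega$. The competitor $\varphi=av-(1-a)\rho$ is then $\omega$-compatible on $B$, satisfies $\{u<\varphi\}\subset\{u<v\}$, and the hypothesis together with $a^m>c$ gives $\1_{\{u<\varphi\}}(\omega+dd^cu)^m\wedge\omega^{n-m}\le\1_{\{u<\varphi\}}(\omega+dd^c\varphi)^m\wedge\omega^{n-m}$; the maximum principle (Lemma~\ref{Lem:maxp}) and the local comparison principle on $B$ (now with legitimate boundary control, since $\rho=0$ on $\partial B$) force $\inf_B(u-\max(u,\varphi))\ge\liminf_{\partial B}(u-\max(u,\varphi))$, which contradicts the strict negativity $m_a-(1-a)b<m_a$ produced by the auxiliary $\rho$. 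If you want to salvage your approach you would need to rebuild it along these local lines; as written, the mass-comparison backbone fails on Hermitian manifolds.
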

The case in which $u,v$ are continuous has been treated in \cite[Section 2]{guedj2023degeneratecomplexhessianequations}. In the case where $u,v \in L^{\infty}$, our proof is essentially analogous to the one developed for the Monge--Amp\`ere operators in \cite{BGL24}.
\begin{proof}
  we may assume $v \geq 1$. We fix a constant $a$ such that $c<a^m<1$, and we prove that $u \geq a v$. The result then follows by letting $a \rightarrow 1$. Assume it is not the case and let $\{x_j\}$ be a sequence converging to $x_0 \in X$ with
\[
\lim _{j \rightarrow+\infty}\left(u\left(x_j\right)-a v\left(x_j\right)\right)=m_a:=\inf _X(u-a v)<0.
\]
Let $B$ be a small neighborhood of $x_0$ and fix a smooth strictly plurisubharmonic function $\rho$ such that $\rho=0$ on $\partial B,|\rho| \leq v$, and $d d^c \rho \leq \omega$. One can take a holomorphic coordinate chart around $x_0$ which is biholomorphic to the unit ball and $\rho(x)=b\left(|x|^2-1\right)$, for a small constant $b$. Observe that $\rho<0$ in $B$. Consider now $\varphi=a v-(1-a) \rho$.
A direct computation then shows that $\omega+d d^c \varphi \geq 0$ in $B$. Also, by construction $\varphi \leq v$ and so \[\{u<\varphi\} \cap B \subset\{u<a v-(1-a) \rho\} \subset\{u<v\}.\] Hence by assumption and using the fact that $\omega+d d^c(-\rho) \geq 0$, we get
\[
\begin{aligned}
\1_{\{u<\varphi\}}\left(\omega+d d^c u\right)^m \wedge\omega^{n-m} \leq & c \1_{\{u<\varphi\}}\left(\omega+d d^c v\right)^m \wedge \omega^{n-m}\\
\leq& a^m \1_{\{u<\varphi\}}\left(\omega+d d^c v\right)^m\wedge \omega^{n-m}\\ \leq & \1_{\{u<\varphi\}}\left(\omega+d d^c \varphi\right)^m \wedge \omega^{n-m}.
\end{aligned}
\]
From this and the maximum principle (Lemma~\ref{Lem:maxp}) we infer
\[
\left(\omega+d d^c u\right)^m \wedge \omega^{n-m} \leq\left(\omega+d d^c \max (u, \varphi)\right)^m \wedge \omega^{n-m} \text { in } B.
\]
It thus follows from the comparison principle (Proposition~\ref{prop:CP}) that
\[
\inf _B(u-\max (u, \varphi)) \geq \liminf _{x \rightarrow \partial B}(u-\max (u, \varphi))(x)
\]
Evaluating this inequality at $x_j$ and letting $j \rightarrow+\infty$, we arrive at
\[\lim _{j \rightarrow+\infty} \min \left(u\left(x_j\right)-a v\left(x_j\right)+(1-a) \rho\left(x_j\right), 0\right) \geq \liminf _{x \rightarrow \partial B} \min \left(u\left(x\right)-a v\left(x\right), 0\right).\]
Since by construction,
$$
\lim _{j \rightarrow+\infty} u\left(x_j\right)-a v\left(x_j\right)+(1-a) \rho\left(x_j\right) = m_a - (1-a)b,
$$
we infer that
$$
m_a - (1-a)b\ge \lim  _{x \rightarrow \partial B}\left(u\left(x\right)-a v\left(x\right)\right) \ge m_a,
$$
which leads to a contradiction.
\end{proof}

As a direct consequence of the domination principle, we have
\begin{coro}
    Fix $\lambda>0$. If $e^{-\lambda v} \omega_v^m \wedge \omega^{n-m} \ge e^{-\lambda u} \omega_u^m \wedge \omega^{n-m}$, then $u \ge v$.
\end{coro}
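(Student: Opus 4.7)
The plan is to reduce this corollary to a direct application of the Domination Principle (Proposition~\ref{prop: DP}). The given hypothesis can be rearranged pointwise (in the measure-theoretic sense) as
\[
\omega_u^m \wedge \omega^{n-m} \le e^{-\lambda(v-u)}\, \omega_v^m \wedge \omega^{n-m}.
\]
On the set $\{u < v\}$ the factor $e^{-\lambda(v-u)}$ is strictly less than $1$, but it does not come with a uniform constant; to invoke Proposition~\ref{prop: DP} one needs a single constant $c < 1$, independent of the point. The natural device is to shift $u$ by a small amount $\varepsilon > 0$ and work with the sub-level set $\{u + \varepsilon < v\}$, where one gains a uniform gap $v - u > \varepsilon$.

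The key steps will run as follows. First, fix $\varepsilon > 0$ and note that $u + \varepsilon \in \SH_m(X,\omega) \cap L^\infty(X)$, with $\omega_{u+\varepsilon}^m = \omega_u^m$. Second, multiply the rearranged hypothesis by $\1_{\{u+\varepsilon < v\}}$; on this set $v - u > \varepsilon$, so $e^{-\lambda(v-u)} < e^{-\lambda \varepsilon}$, and hence
\[
\1_{\{u+\varepsilon < v\}} \, \omega_{u+\varepsilon}^m \wedge \omega^{n-m} \le e^{-\lambda\varepsilon}\, \1_{\{u+\varepsilon < v\}} \, \omega_v^m \wedge \omega^{n-m}.
\]
This is exactly the hypothesis of the Domination Principle with constant $c = e^{-\lambda\varepsilon} < 1$ applied to the pair $(u + \varepsilon, v)$, which yields $u + \varepsilon \ge v$ on $X$. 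Finally, letting $\varepsilon \to 0^+$ gives $u \ge v$.

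I do not expect a real obstacle here: the proof is essentially a bookkeeping argument. The only point that deserves a moment of care is justifying the pointwise rearrangement of the inequality between the two positive measures: since $u$ and $v$ are bounded, the Radon--Nikodym-type manipulation by the bounded Borel factor $e^{\lambda u - \lambda v}$ is legitimate, so restricting to $\{u+\varepsilon < v\}$ and using the uniform lower bound $v - u > \varepsilon$ on this set gives the inequality in the form required by Proposition~\ref{prop: DP}.
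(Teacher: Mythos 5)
Your proof is correct and follows essentially the same route as the paper: the paper restricts to $\{u < v - C\}$, bounds the density factor by $e^{-\lambda C}$, applies the Domination Principle to conclude $u \ge v - C$, and lets $C \to 0$, which is the same argument as yours with the constant shift placed on $v$ instead of $u$.
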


\begin{proof}
    Fix $C > 0$. On the set $\{ u < v-C \}$ we have \[\omega_u^m \wedge \omega^{n-m} \le e^{\lambda (u-v)} \omega_v^m \wedge \omega^{n-m} \le e^{-\lambda C} \omega_v^m \wedge \omega^{n-m} .\] It follows from Proposition~\ref{prop: DP} that $ u \ge v - C$. Since this holds for all $C >0$, we conclude that $u \ge v $.
\end{proof}
\begin{coro}
Assume $u, v$ are bounded $(\omega,m)$-subharmonic functions on $X$ such that
\[
\left(\omega+d d^c u\right)^m \wedge \omega^{n-m} \leq c\left(\omega+d d^c v\right)^m \wedge \omega^{n-m}
\]
for some positive constant $c$. Then $c \geq 1$.
\end{coro}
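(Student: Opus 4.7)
The plan is to argue by contradiction, exploiting the shift invariance of the Hessian measure under adding constants and the Domination Principle (Proposition~\ref{prop: DP}). Suppose toward a contradiction that $c < 1$. For any real constant $C$, the translate $v_C := v + C$ is still a bounded $(\omega,m)$-subharmonic function with the same Hessian measure, since $\omega + dd^c v_C = \omega + dd^c v$, and therefore
\[
(\omega + dd^c u)^m \wedge \omega^{n-m} \le c\,(\omega + dd^c v_C)^m \wedge \omega^{n-m}
\]
continues to hold on all of $X$, in particular on the set $\{u < v_C\}$.

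Restricting the inequality to $\{u < v_C\}$ places us exactly in the setting of the Domination Principle with constant $c \in [0,1)$. Hence Proposition~\ref{prop: DP} yields $u \ge v_C = v + C$ on $X$. Since $C \in \R$ was arbitrary, letting $C \to +\infty$ contradicts the boundedness of $u$. This forces $c \ge 1$, completing the argument.

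The only non-routine ingredient is the observation that adding a constant to $v$ does not alter the Hessian mass, so the hypothesis is preserved for the shifted pair $(u, v_C)$. Once this is noted, the statement reduces to a single application of the domination principle, and no further obstacle is anticipated; in particular there is no need to invoke Kołodziej-type capacity estimates or to exploit the preceding corollary with an auxiliary parameter $\lambda>0$.
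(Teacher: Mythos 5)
Your proof is correct: since the hypothesis holds globally, it holds on each set $\{u<v+C\}$, and because adding a constant leaves the Hessian measure unchanged, the Domination Principle (Proposition~\ref{prop: DP}) applied with $c<1$ gives $u\ge v+C$ for every $C$, contradicting boundedness. The paper states this corollary without proof as a direct consequence of the domination principle, and your translation argument is exactly the intended one.
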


\subsubsection{Cegrell classes}
In Section~\ref{Sec:unifest}, we study the Monge--Ampère equation in a local context. Here, we recall some essential definitions of the Cegrell classes \cite{Cegrell1998,Cegrell04}. 

Fix a bounded hyperconvex domain $\Omega \subset \C^n$. Let $\mathcal{T}(\Omega)$ denote the set of bounded plurisubharmonic functions $u$ in $\Omega$ such that $\lim_{z \rightarrow \zeta } u(z) =0$ for every $\zeta \in \partial \Omega$, and $\int_{\Omega}(\ddc u)^n < +\infty$.

A function $u$ belongs to $\mathcal{F}(\Omega)$ iff there exists a sequence $u_j \in \mathcal{T}(\Omega)$ decreasing to $u$ in all of $\Omega$, which satisfies $\sup_{j} \int_{\Omega}(\ddc u_j)^n < +\infty$.

As demonstrated by Cegrell \cite{Cegrell1998,Cegrell04}, the complex Monge--Amp\`ere operator can be defined for functions belonging to the class $\mathcal{F}(\Omega)$ 
even when these functions are unbounded. The Dirichlet problem in 
$\mathcal{F}(\Omega)$ was solved in \cite{Cegrell04}:

\begin{thm}\label{thm:Cegrell98}
    Assume that $\mu$ is a positive measure on $\Omega$. If $ \mu(\Omega) < +\infty$ and $\mu$ vanishes on all pluripolar sets, then there exists a unique function $u \in \mathcal{F}(\Omega)$ such that $(\ddc u)^n = \mu$.
\end{thm}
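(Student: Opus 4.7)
The plan is to establish uniqueness first via a comparison principle in the Cegrell class, and then to prove existence through a three-step approximation procedure.

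For uniqueness, suppose $u, v \in \mathcal{F}(\Omega)$ both satisfy $(\ddc \cdot)^n = \mu$. Both functions tend to $0$ at $\partial\Omega$ along their defining decreasing sequences in $\mathcal{T}(\Omega)$, and their total Monge--Amp\`ere masses coincide. A standard extension of the comparison principle (Proposition~\ref{prop:CP}) to $\mathcal{F}(\Omega)$, obtained by approximating $u$ and $v$ from above by elements of $\mathcal{T}(\Omega)$ and using the uniform $\int_\Omega (\ddc \cdot)^n$ bound to pass to the limit, forces $u = v$.

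For existence, I would proceed in three stages. First, when $\mu$ is a bounded measure compactly supported in $\Omega$ (so $\mu \leq M\,\mathbf{1}_K\,dV$ for some $M$ and compact $K \subset \Omega$), classical Bedford--Taylor theory, together with a local version of Ko\l odziej's $L^\infty$-estimate of the sort developed in Section~\ref{Sec:unifest}, produces a bounded solution $u \in \PSH(\Omega) \cap L^\infty(\Omega) \cap \mathcal{T}(\Omega)$ of the Dirichlet problem with zero boundary values. Second, for a general finite $\mu$ vanishing on pluripolar sets, I would invoke Cegrell's decomposition theorem: such a $\mu$ can be written as $\mu = f\,(\ddc \varphi)^n$ for some $\varphi \in \mathcal{T}(\Omega)$ and some nonnegative $f \in L^1(\Omega, (\ddc \varphi)^n)$. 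This is where the pluripolar-vanishing hypothesis is crucial, since Monge--Amp\`ere measures of bounded plurisubharmonic functions themselves put no mass on pluripolar sets.

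Third, I would truncate by setting $\mu_j \coloneqq \min(f, j)\,(\ddc \varphi)^n$, an increasing sequence of bounded compactly-supported measures with $\mu_j \nearrow \mu$. By the first stage each $\mu_j$ admits a bounded solution $u_j \in \mathcal{T}(\Omega)$ with $(\ddc u_j)^n = \mu_j$. The comparison principle shows $u_j$ is decreasing in $j$, and the uniform bound $\int_\Omega (\ddc u_j)^n = \mu_j(\Omega) \leq \mu(\Omega) < +\infty$ ensures, via the standard $L^1$-estimates on the Cegrell class, that the pointwise decreasing limit $u \coloneqq \lim_j u_j$ is not identically $-\infty$ and therefore lies in $\mathcal{F}(\Omega)$. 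Cegrell's continuity theorem for the complex Monge--Amp\`ere operator along decreasing sequences in $\mathcal{F}(\Omega)$ then gives $(\ddc u_j)^n \to (\ddc u)^n$ weakly, and combined with $\mu_j \nearrow \mu$ we conclude $(\ddc u)^n = \mu$.

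The main obstacle I expect is the decomposition theorem invoked in the second stage, i.e.\ representing any finite measure vanishing on pluripolar sets as $f \cdot (\ddc \varphi)^n$ with $\varphi$ bounded. This is the technical heart of Cegrell's theory and requires delicate capacity estimates, a Radon--Nikodym-type argument relative to Monge--Amp\`ere measures of bounded psh functions, and a careful construction of $\varphi$ via summing capacity-extremal functions associated to a covering of $\Omega$; the passage to the limit in the third stage is comparatively routine once this decomposition is available.
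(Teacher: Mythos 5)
First, note that the paper does not actually prove this statement: its ``proof'' is the citation to \cite[Lemma 5.14]{Cegrell04}, and your outline is essentially a reconstruction of Cegrell's own argument (decomposition $\mu=f\,(\ddc\varphi)^n$, truncation, comparison, continuity of the Monge--Amp\`ere operator along decreasing sequences, uniqueness via the comparison principle in $\mathcal{F}(\Omega)$). So in spirit you follow the same route as the source the paper relies on; but since you leave the decomposition theorem itself as a black box, your write-up does not add a proof beyond what the citation already provides -- the decomposition (and, for that matter, uniqueness in $\mathcal{F}(\Omega)$, which is a genuine theorem and not a routine limit of Proposition~\ref{prop:CP}) is where all the work lies.

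There is also a concrete gap in the chain as you wrote it: in your first stage you only solve the Dirichlet problem for measures $\mu\le M\,\1_K\,dV$, i.e.\ with bounded density with respect to Lebesgue measure, whereas the truncated measures of your third stage, $\mu_j=\min(f,j)\,(\ddc\varphi)^n$, are in general \emph{not} of this form: $(\ddc\varphi)^n$ is the Monge--Amp\`ere measure of a bounded plurisubharmonic function and may be singular with respect to $dV$ (it is only guaranteed to vanish on pluripolar sets). Hence stage~1, as stated, does not produce the bounded solutions $u_j\in\mathcal{T}(\Omega)$ that stage~3 requires. The standard repair, and the one Cegrell uses, is Ko\l odziej's subsolution theorem (the range of the complex Monge--Amp\`ere operator, II): since $\mu_j\le j\,(\ddc\varphi)^n=(\ddc(j^{1/n}\varphi))^n$ with $j^{1/n}\varphi$ bounded, plurisubharmonic and tending to $0$ on $\partial\Omega$, the problem $(\ddc u_j)^n=\mu_j$ with zero boundary values admits a bounded solution. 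With that ingredient inserted, the remaining steps you describe (monotonicity of $u_j$ by comparison, $\sup_j\int_\Omega(\ddc u_j)^n\le\mu(\Omega)<+\infty$ giving $u\in\mathcal{F}(\Omega)$ once one checks $u\not\equiv-\infty$, and convergence $(\ddc u_j)^n\to(\ddc u)^n$ against $\mu_j\nearrow\mu$) are indeed the standard conclusion of Cegrell's proof.
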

\begin{proof}
    See \cite[Lemma 5.14]{Cegrell04}.
\end{proof}

\subsection{Orlicz space}
We recall some essential facts about Orlicz spaces that will be used throughout, following \cite{Dar_survay}. Let $\mu$ be a probability measure on $X$. A weight $\chi : \R^{*} \to \R^* \cup \{+\infty\}$ is called \emph{normalized Young weight} if it is convex, lower semi-continuous, non-trivial satisfying the normalizing condition \[
\chi(0)= 0 \text{ and } 1\in\partial\chi(1).
\]
Recall that $ \partial \chi(l) \subset \mathbb{R} $ denotes the set of subgradients of  $\chi$  at  $l $, which means that $ a \in \partial \chi(t)$ if and only if  $\chi(t) + a b \leq \chi(t + b)$  for all  $b \in \R$. Its conjugate convex weight $\chi^{*}$ is the Legendre transform of $\chi$:
\[
\chi^{*}(h) = \sup_{t\in \R^*} \{ ht - \chi(t)\}.
\]
One can easily verify that the following propositions hold:
\begin{prop}  Let $\chi$ be a normalized Young weight. Then 
    \begin{enumerate}[label=(\roman*)]
        \item $\chi^*$ is also a normalized Young weight;
        \item(Fenchel--Moreau theorem). The biconjugate weight $ \chi^{**}$ is also a normalized Young weight, and $\chi^{**}= \chi$.
    \end{enumerate} 
\end{prop}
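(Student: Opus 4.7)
The plan is to verify directly each defining property of a normalized Young weight for $\chi^{*}$ in (i), and then invoke the classical Fenchel--Moreau theorem to conclude (ii).

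For (i), convexity and lower semi-continuity of $\chi^{*}$ will follow immediately from its definition as a pointwise supremum of the affine functions $h \mapsto ht - \chi(t)$, $t\in\R^{*}$: a supremum of affine functions is convex, and a supremum of continuous functions is lower semi-continuous. To obtain $\chi^{*}(0)=0$, I would write $\chi^{*}(0) = -\inf_{t} \chi(t)$ and use that $\chi \ge 0$ (this is built into the range of a Young weight) together with $\chi(0)=0$, so that the infimum is $0$ and is attained at $t=0$. Non-triviality of $\chi^{*}$ can be established from the growth estimate $\chi(t) \ge t + \chi(1) - 1$ for $t\ge 1$, which is an immediate consequence of $1\in\partial\chi(1)$ applied at $b=t-1$; this guarantees that the affine functions $t\mapsto ht-\chi(t)$ stay bounded above when $h<1$, so $\chi^{*}$ is finite near the origin.

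The main step I expect to require some care is the normalization condition $1 \in \partial\chi^{*}(1)$. My plan here is to use the classical duality between subdifferentials of a convex lsc function and its Legendre transform, namely the equivalence
\[
h \in \partial \chi(t) \quad \Longleftrightarrow \quad \chi(t) + \chi^{*}(h) = th \quad \Longleftrightarrow \quad t \in \partial \chi^{*}(h),
\]
which is the Fenchel--Young equality case. Applying this with $t=h=1$ and using the hypothesis $1\in \partial\chi(1)$ will yield $1\in\partial\chi^{*}(1)$ directly.

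For (ii), the Fenchel--Moreau theorem asserts that $\chi^{**}=\chi$ for any proper, convex, lower semi-continuous function on a locally convex space, which applies here since $\chi$ is assumed to satisfy all these properties. That $\chi^{**}$ is itself a normalized Young weight is then automatic: it suffices to apply (i) to $\chi^{*}$ (which is a normalized Young weight by (i)) to obtain that $(\chi^{*})^{*}=\chi^{**}$ is also one. The main obstacle is conceptual rather than technical: it is the clean statement and invocation of the subdifferential duality, together with verifying non-triviality in sufficient detail; both are standard in convex analysis and will not require new estimates.
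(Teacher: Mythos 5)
Your proposal is correct and is exactly the standard convex-analysis verification that the paper leaves to the reader (it states the proposition as easily verified, following \cite{Dar_survay}): convexity and lower semi-continuity of $\chi^*$ as a supremum of affine maps, $\chi^*(0)=-\inf\chi=0$, properness from the subgradient bound $\chi(t)\ge \chi(1)+(t-1)$, the Fenchel--Young equality case giving $1\in\partial\chi^*(1)$, and Fenchel--Moreau for (ii). Note only that of the equivalences you display, your argument uses just the elementary implications $1\in\partial\chi(1)\Rightarrow \chi(1)+\chi^*(1)=1\Rightarrow 1\in\partial\chi^*(1)$, which need no lower semi-continuity, so there is no circularity with the Fenchel--Moreau step in (ii).
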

One can define a function space with respect to the Young function. Let $L^{\chi}(\mu)$ denote the space of measurable functions defined as follows:
\[
L^{\chi}(\mu) \coloneqq \left\{ f: X \rightarrow \R^{*}\cup \{+\infty\}: \exists s >0, \int_X \chi(s f) < +\infty \right\}.
\]
One may also define the norm (also known as Luxembourg norm) on $L^{\chi}(\mu)$.
\[
\| f \|_{\chi,\mu} \coloneqq \inf \left\{ r>0 : \int_X \chi \left(\frac{f}{r} \right) \le \chi(1) \right\}.
\]
We remark that $(L^{\chi}(\mu), \| \cdot\|_{\chi,\mu})$ constitutes a normed space and is complete, hence a Banach space. In the remainder of the paper, we fix $\mu = dV_X$, a smooth probability measure, and write $(L^{\chi}, \| \cdot\|_{\chi})$ for simplicity.

In \cite{Kolodziej1998}, S.Ko\l odziej developped a technique using pluripotential theory that enables one to bound the solution to the Monge--Amp\`ere equation by the Orlicz norm, $\| \cdot\|_{\chi}$, of the right-hand side, associated with a weaker integrable condition. We thus introduce the \hyperref[Condition K]{Condition (K)}.
\begin{definition}[Condition (K)] \label{Condition K} 
A weight $\chi: \R^{+} \rightarrow \R^{+}$ satisfies condition (K) if it is convex increasing and  \[
\chi(t) \underset{t \to +\infty}{\sim}   t^{n/m} \left(\log(1+t)\right)^{n}  \left(h \circ \log \circ \log(1+t)\right)^{n} \text{, where } \int^{+\infty} \frac{dt}{h(t)} <+\infty.
\]
\end{definition}

\begin{example} We discuss and present a few typical examples:
    \begin{enumerate}[label=(\roman*)] 
        \item When $\chi(t)= \frac{t^p}{p}$ for $p>1$, its convex conjugate is given by $\chi^{*}(t ) = \frac{t^q}{q}$, where $q = \frac{p}{p-1}$ is the H\"older conjugate of $p$. The Orlicz norm $\|\cdot\|_{\chi}$ here is the usual $L^p$ norm.
        \item The weight $\chi$ defined in the \hyperref[Condition K]{Condition (K)} above is a Young function hence induce an Orlicz space. In particular, the weight $\chi(t)= t \log^p(1+t)$ with $p>n$ is a Young function. One can check that as $t\rightarrow +\infty$, the conjugate weight grows like $\chi^{*}(t) = t^{1-\frac{1}{p}} \exp (t^{\frac{1}{p}})$.
    \end{enumerate}
\end{example}

We will focus on the Orlicz space $(L^{\chi},\| \cdot \|_{\chi})$ in section~\ref{sec:weaksol}.
It is important to note that in such an Orlicz space, the H\"older inequality (or H\"older--Young inequality) holds.

\begin{prop} [H\"older--Young inequality]\label{ine:Holder}
    For $f \in L^{\chi}$ and $g \in L^{\chi^{*}}$, we have
    \[
    \int_X |fg| dV_X \le \|f \|_{\chi} \| g\|_{\chi^{*}}.
    \]
\end{prop}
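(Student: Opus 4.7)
The plan is to reduce the Hölder--Young inequality to the pointwise Young inequality $ab \leq \chi(a) + \chi^{*}(b)$ for all $a,b \in \R^{+}$, which is immediate from the definition of the Legendre transform $\chi^{*}(h) = \sup_{t}\{ht - \chi(t)\}$. This is the standard strategy, with one mild subtlety: one must exploit the normalizing condition $1 \in \partial \chi(1)$ to obtain the equality $\chi(1) + \chi^{*}(1) = 1$, which guarantees that the constant on the right-hand side after integration is exactly $1$ (and not some larger number that would spoil the sharp bound).

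First I would dispose of the trivial cases: if $\|f\|_{\chi} = 0$ or $\|g\|_{\chi^{*}} = 0$ then $f$ or $g$ vanishes $\mu$-a.e.\ and the inequality holds trivially; if either norm is $+\infty$ the inequality is vacuous. So we may assume $0 < \|f\|_{\chi}, \|g\|_{\chi^{*}} < +\infty$. Next I would verify that the infimum defining the Luxembourg norm is attained (or at least approached in a usable way). Since $\chi$ is lower semi-continuous and convex, Fatou's lemma applied to a minimizing sequence $r_k \downarrow \|f\|_{\chi}$ yields
\[
\int_X \chi\!\left(\frac{|f|}{\|f\|_{\chi}}\right) dV_X \leq \chi(1),
\]
and analogously
\[
\int_X \chi^{*}\!\left(\frac{|g|}{\|g\|_{\chi^{*}}}\right) dV_X \leq \chi^{*}(1).
\]

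Now I would use the condition $1 \in \partial \chi(1)$ together with the characterization of subgradients via Legendre duality: $a \in \partial \chi(b)$ if and only if $\chi(b) + \chi^{*}(a) = ab$. Taking $a = b = 1$ this gives $\chi(1) + \chi^{*}(1) = 1$. Applying Young's inequality pointwise with $a = |f|/\|f\|_{\chi}$ and $b = |g|/\|g\|_{\chi^{*}}$ and integrating over $X$, I obtain
\[
\int_X \frac{|fg|}{\|f\|_{\chi} \|g\|_{\chi^{*}}} \, dV_X \;\leq\; \int_X \chi\!\left(\frac{|f|}{\|f\|_{\chi}}\right) dV_X + \int_X \chi^{*}\!\left(\frac{|g|}{\|g\|_{\chi^{*}}}\right) dV_X \;\leq\; \chi(1) + \chi^{*}(1) \;=\; 1,
\]
which after rearranging yields the claimed inequality.

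The only genuinely non-routine point, and what I would regard as the main (minor) obstacle, is the identity $\chi(1) + \chi^{*}(1) = 1$: it is exactly the reason the normalization $1 \in \partial \chi(1)$ was built into the definition of a normalized Young weight, and without it one would merely get an unsharp bound with constant $\chi(1) + \chi^{*}(1)$ on the right. Once this algebraic identity is in hand, the rest is an essentially mechanical two-line application of Young's inequality.
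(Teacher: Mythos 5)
Your proof is correct: the pointwise Young inequality together with the normalization $1\in\partial\chi(1)$ (hence $\chi(1)+\chi^{*}(1)=1$ by the Fenchel equality case) and the Fatou argument showing $\int_X \chi\bigl(|f|/\|f\|_{\chi}\bigr)\,dV_X\le\chi(1)$, $\int_X \chi^{*}\bigl(|g|/\|g\|_{\chi^{*}}\bigr)\,dV_X\le\chi^{*}(1)$ gives exactly the stated bound with constant $1$. The paper itself offers no argument but simply cites \cite[Proposition 1.3]{Dar_survay}, and your proof is precisely the standard one behind that reference, so there is nothing to add.
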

\begin{proof}
    See \cite[Proposition 1.3]{Dar_survay}.
\end{proof}
We will need the following additive Young inequality (also known as Fenchel--Young inequality).
\begin{prop}\label{Ineq: addYoung} 
    For every $a,b \in \R^+$, it holds that
    \[
    ab \le \chi(a)+\chi^*(b),\]
    with equality if and only if $b \in \partial \chi(a)$. 
\end{prop}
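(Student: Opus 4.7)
The plan is to reduce everything to the defining formula of the Legendre transform. By definition,
\[
\chi^{*}(b) = \sup_{t\in \R^{*}} \{ bt - \chi(t)\},
\]
so evaluating the supremand at the particular point $t=a$ yields $\chi^{*}(b) \ge ba - \chi(a)$, which immediately rearranges to the desired inequality $ab \le \chi(a) + \chi^{*}(b)$. This part is essentially a one-line observation and poses no difficulty.

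For the equality characterization, I would note that $ab = \chi(a) + \chi^{*}(b)$ holds if and only if the supremum defining $\chi^{*}(b)$ is attained at $t=a$. The plan is then to study the function $\phi(t) := bt - \chi(t)$, which is concave because $\chi$ is convex, and to invoke the standard subdifferential criterion: a concave function attains its maximum at $a$ if and only if $0$ belongs to its superdifferential at $a$, i.e.\ if and only if $0 \in \{b\} - \partial \chi(a)$, which is exactly $b \in \partial \chi(a)$. Equivalently, one may check both implications directly: if $b \in \partial \chi(a)$, then by the definition of subgradient recalled in the excerpt, $\chi(a) + b(t-a) \le \chi(t)$ for all $t$, so $bt - \chi(t) \le ba - \chi(a)$ and the sup is achieved at $a$; conversely, if $a$ achieves the sup, then for every $t$ one has $bt - \chi(t) \le ba - \chi(a)$, which after rearrangement is precisely the subgradient condition $b \in \partial \chi(a)$.

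The main (and only) subtlety is being careful about the formulation on $\R^{*}$ (here interpreted as $\R^{+}$ since $a,b \ge 0$ and $\chi$ is increasing and convex with $\chi(0)=0$), so that the sup is meaningful and the subdifferential calculus applies. Since $\chi$ is a normalized Young weight, $\chi$ is lower semicontinuous and convex, so the direct definition-based argument above is fully rigorous without invoking any machinery beyond the Fenchel--Moreau identification $\chi^{**}=\chi$ already recalled in the excerpt. I do not expect any serious obstacle: this is the classical Fenchel--Young inequality, and the proof amounts to unpacking the definitions plus a straightforward subgradient characterization of the maximizer.
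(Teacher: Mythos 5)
Your argument is correct: the inequality is the immediate consequence of evaluating the supremand defining $\chi^{*}(b)$ at $t=a$, and the equality case is exactly the statement that the supremum is attained at $a$, which your subgradient rearrangement handles properly (note the paper's own definition of $\partial\chi$ matches the condition $\chi(a)+b(t-a)\le\chi(t)$ for all $t$ that you derive). The paper states this proposition without proof as the classical Fenchel--Young inequality, so your definition-unpacking argument is precisely the standard justification it implicitly relies on; there is no divergence to report.
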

In particular, let $f,g : X \rightarrow \R^+$ be two measurable functions. Then
\[
f \left((\chi^*)^{-1} \circ g \right)\le \chi\circ f + g.
\]

\subsection{Envelopes}
Given a Lebesgue measurable function $ l: X \to \R$, the $\omega$-plurisubharmonic envelope of $l$ is defined by
\[
P_{\omega} (l ) \coloneqq \sup\{ u \in \PSH(X,\omega) : u \le l\}.
\]
The $(\omega,m)$-subharmonic envelope of $l$ is defined by
\[
P_{\omega,m} (l) \coloneqq \sup\{ u \in \SH_m(X,\omega) : u \le l\}.
\]
We now summarize the basic properties of these envelopes, following \cite{guedj2023degeneratecomplexhessianequations}.
\begin{prop} \label{prop: concentrated}If $l : X \to \R$ is continuous, then
    \begin{enumerate}
        \item The complex Monge--Amp\`ere measure $(\omega + \ddc P_{\omega}(l))^n$ is concentrated on the contact set $\{P_{\omega}(l)=l \}$.
        \item The complex Hessian measure $(\omega + \ddc P_{\omega,m}(l))^m \wedge \omega^{n-m}$ is concentrated on the contact set $\{P_{\omega,m}(l)=l \}$.
        \item If $l $ is $\mathcal{C}^{1,1}$-smooth, then $P_{\omega}(l )$ is $\mathcal{C}^{1,1}$-smooth and
        \[
        (\omega + \ddc P_{\omega}(l ))^n = \1_{\{P_{\omega} (l)=l\} } (\omega + \ddc l)^n.
        \]
        \item If $l$ is $\mathcal{C}^{1,1}$-smooth, then $P_{\omega,m}(l)$ is $\mathcal{C}^{1,1}$-smooth and
        \[
        (\omega + \ddc P_{\omega,m}(l))^m \wedge \omega^{n-m} = \1_{\{P_{\omega,m} (l)=l\} } (\omega + \ddc l)^m \wedge \omega^{n-m}.
        \]
    \end{enumerate}
\end{prop}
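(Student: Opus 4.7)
The plan is to handle the concentration statements (1)-(2) by a classical maximality/balayage argument, and to derive the explicit $\mathcal{C}^{1,1}$-formulas in (3)-(4) by combining the $\mathcal{C}^{1,1}$-regularity of envelopes (a deep external input) with a soft pointwise computation on the contact set.

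For parts (1) and (2), the upper semicontinuity of $P_\omega(l)$ and $P_{\omega,m}(l)$ together with the continuity of $l$ makes the set $O \coloneqq \{P_\omega(l) < l\}$ open (analogously for the $m$-case). I would show that the relevant complex Monge--Amp\`ere (respectively Hessian) measure vanishes on $O$. Fix $x_0 \in O$ and a small coordinate ball $B \subset O$ on which $l \ge P_\omega(l) + \delta$ for some $\delta > 0$. In local coordinates we may write $\omega = \ddc \rho$ for a smooth strictly plurisubharmonic potential $\rho$, so $\phi \coloneqq P_\omega(l) + \rho$ is psh (respectively $m$-sh) in $B$. Solve the local Dirichlet problem $(\ddc u)^n = 0$ in $B$ (respectively $(\ddc u)^m \wedge \omega^{n-m} = 0$) with boundary value $\phi|_{\partial B}$, whose solvability is classical by Bedford--Taylor (respectively by B{\l}ocki and Dinew--Ko{\l}odziej). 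The comparison principle (Proposition~\ref{prop:CP}) gives $u \ge \phi$ in $B$, and for $B$ sufficiently small one controls $\|u - \phi\|_{L^\infty(B)} \le \delta$. Gluing $u - \rho$ inside $B$ with $P_\omega(l)$ outside via the $\max$ operation then produces an admissible competitor in the envelope family that dominates $P_\omega(l)$ in $B$; maximality forces $u = \phi$ in $B$, which is precisely the vanishing of the corresponding measure there. Since $x_0 \in O$ was arbitrary, the measure is concentrated on the complement $\{P_\omega(l) = l\}$ (resp.\ $\{P_{\omega,m}(l) = l\}$).

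For parts (3) and (4), the main ingredient is the $\mathcal{C}^{1,1}$-regularity of the envelope: for the $\omega$-psh envelope this is the Berman--Demailly theorem in the K\"ahler setting, extended to Hermitian manifolds by Chu--Tosatti--Weinkove and collaborators; the $(\omega,m)$-sh envelope enjoys analogous regularity in the recent literature. Granting this, the stated identities follow from a contact-set argument: on $\{P_\omega(l) = l\}$ both $l$ and $P_\omega(l)$ are $\mathcal{C}^{1,1}$ and coincide, so their second-order derivatives (existing a.e.\ by Rademacher's theorem for $\mathcal{C}^{1,1}$ functions) must agree a.e.\ on the contact set. Hence $(\omega + \ddc P_\omega(l))^n$ coincides a.e.\ with $(\omega + \ddc l)^n$ on the contact set; combined with (1) giving vanishing of the envelope's measure off the contact set, one recovers the stated formula. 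Part (4) is identical with Hessian measures in place of Monge--Amp\`ere.

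The main obstacle is the $\mathcal{C}^{1,1}$-regularity used as input in (3)-(4). Its proof requires approximating the envelope by smooth solutions of perturbed complex Monge--Amp\`ere (respectively $m$-Hessian) equations and establishing $\mathcal{C}^{1,1}$-estimates uniform in the regularization parameter; on a non-K\"ahler Hermitian background the torsion of $\omega$ complicates the usual integration-by-parts identities, and the $m$-Hessian case is structurally more delicate than the full Monge--Amp\`ere. By contrast, once this regularity is secured, the remaining ingredients---comparison, maximum principle, local Dirichlet theory, and the pointwise identification on the contact set---are soft and essentially formal.
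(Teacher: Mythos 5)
The paper itself offers no proof of this proposition: it is stated as a summary of known envelope results, with the proofs deferred to the cited literature (Berman, Chu--McCleerey, Chu--Zhou, Guedj--Lu, Guedj--Lu--Zeriahi, Tosatti). Your sketch reconstructs exactly the standard arguments behind those references --- balayage/maximality off the contact set for (1)--(2), and $\mathcal{C}^{1,1}$ regularity of the envelope plus an a.e.\ identification of second derivatives on the contact set for (3)--(4) --- so in spirit it is the same route the paper implicitly relies on, with the deep regularity input quoted rather than proved, which is consistent with how the paper treats it.

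Two points in your write-up need repair. First, on a Hermitian manifold $\omega$ is in general not closed, so the claim that one can write $\omega = \ddc \rho$ in local coordinates is false; one can only arrange $\ddc \rho \ge \omega$, which does not reduce the homogeneous problem to $(\ddc u)^n = 0$. The standard fix is to work with the operators $(\omega + \ddc \,\cdot\,)^n$, resp.\ $(\omega + \ddc\,\cdot\,)^m \wedge \omega^{n-m}$, directly: solve the homogeneous Dirichlet problem in the small ball $B$ with boundary data $P_{\omega}(l)$ (available for continuous data in the Hermitian setting, via the local theory of Ko\l odziej--Nguyen and the Hessian-measure results the paper cites, noting that $P_{\omega}(l)$ is continuous when $l$ is, or using a Perron-type local envelope), obtain $u \ge P_{\omega}(l)$ from the comparison principle, and bound $u$ from above by the harmonic majorant of the boundary values together with the oscillations of $l$ and of the local data on $B$, so that $u \le l$ in $B$; your asserted control $\|u-\phi\|_{L^{\infty}(B)} \le \delta$ is not automatic and should be replaced by this oscillation argument before gluing. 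Second, in (3)--(4) the conclusion that the densities agree a.e.\ on the contact set is correct, but the justification is not Rademacher's theorem: for $w = l - P_{\omega}(l) \in \mathcal{C}^{1,1} = W^{2,\infty}_{loc}$ one applies twice the fact that a Sobolev function has vanishing gradient a.e.\ on its zero set, giving $\ddc P_{\omega}(l) = \ddc l$ a.e.\ on $\{P_{\omega}(l) = l\}$; combined with (1)--(2) and the absolute continuity of the Monge--Amp\`ere (resp.\ Hessian) measure of a $\mathcal{C}^{1,1}$ function, this yields the stated formulas. With these corrections, and granting the quoted $\mathcal{C}^{1,1}$ regularity of $P_{\omega}$ and $P_{\omega,m}$ on Hermitian backgrounds (which the paper also takes from the literature), your argument is sound.
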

For detailed proofs and additional discussion, we refer the reader to \cite{Berman2019,ChuMcCleerey,ChuZhou2019,GL2022,GLZ2019envelopes,Tosatti_2018}.
As a consequence of the preceding results, we will make use of the following statements in the sequel.
\begin{lem} \label{lem:subsol preserve}
Fix $\lambda >0$. Let $u,v \in \SH_m(X,\omega) \cap L^{\infty}(X)$.
\begin{enumerate}
    \item If $ (\omega + \ddc u)^m \wedge \omega^{n-m} \ge e^{\lambda u}fdV_X$ and $ (\omega + \ddc v)^m \wedge \omega^{n-m} \ge e^{\lambda u }gdV_X$,
then \[(\omega+\ddc (\max(u,v)))^m \wedge \omega^{n-m} \ge e^{\lambda \max(u,v) }\min(f,g)dV_X.\]
\item If
$(\omega + \ddc u)^m \wedge \omega^{n-m} \le e^{\lambda u}fdV_X$, and $ (\omega + \ddc v)^m \wedge \omega^{n-m}\le e^{\lambda v}gdV_X$,
then \[(\omega+\ddc (P_{\omega,m}(\min(u,v)))^m \wedge \omega^{n-m} \le e^{\lambda P_{\omega,m}(\min(u,v) )}\max(f,g)dV_X.\]
\end{enumerate}
\end{lem}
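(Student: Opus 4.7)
For part (1), set $w:=\max(u,v)$ and apply the first statement of the maximum principle (Lemma~\ref{Lem:maxp}):
\[
(\omega+\ddc w)^m\wedge\omega^{n-m} \ge \mathbf{1}_{\{u>v\}}(\omega+\ddc u)^m\wedge\omega^{n-m} + \mathbf{1}_{\{v\ge u\}}(\omega+\ddc v)^m\wedge\omega^{n-m}.
\]
On $\{u>v\}$ one has $w=u$, and the hypothesis on $u$ bounds the first term from below by $e^{\lambda u}f\,dV_X=e^{\lambda w}f\,dV_X\ge e^{\lambda w}\min(f,g)\,dV_X$; the symmetric estimate on $\{v\ge u\}$ (where $w=v$, and the hypothesis on $v$ should be read as $e^{\lambda v}g\,dV_X$) handles the second term. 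Adding the two contributions gives the desired lower bound.

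For part (2), set $\psi:=P_{\omega,m}(\min(u,v))$, which is a bounded $(\omega,m)$-subharmonic function. Proposition~\ref{prop: concentrated}(2) tells us that $(\omega+\ddc\psi)^m\wedge\omega^{n-m}$ is concentrated on the contact set $\{\psi=\min(u,v)\}=A_u\cup A_v$ with $A_u:=\{\psi=u\le v\}$ and $A_v:=\{\psi=v\le u\}$. Since $\psi\le u$, the second statement of Lemma~\ref{Lem:maxp} applied on $A_u$ yields
\[
\mathbf{1}_{A_u}(\omega+\ddc\psi)^m\wedge\omega^{n-m} \le \mathbf{1}_{A_u}(\omega+\ddc u)^m\wedge\omega^{n-m} \le \mathbf{1}_{A_u}e^{\lambda u}f\,dV_X,
\]
and on $A_u$ we have $\psi=u$, so the right-hand side equals $\mathbf{1}_{A_u}e^{\lambda\psi}f\,dV_X\le \mathbf{1}_{A_u}e^{\lambda\psi}\max(f,g)\,dV_X$. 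The symmetric reasoning on $A_v$, using $\psi\le v$ and the hypothesis on $v$, produces the same bound with $g$ in place of $f$. Summing the two pieces and using the concentration on the contact set yields the claimed upper bound.

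The main obstacle is justifying Proposition~\ref{prop: concentrated}(2) for the barrier $\min(u,v)$, which is only bounded and upper semicontinuous rather than continuous as assumed in the statement recalled earlier. I plan to handle this by a standard approximation: choose decreasing sequences $u_j\searrow u$ and $v_j\searrow v$ of continuous $(\omega,m)$-subharmonic functions (available on $X$ by the regularization invoked in the proof of Lemma~\ref{Lem:maxp}), apply the continuous-barrier concentration result to $\min(u_j,v_j)$ and the corresponding envelopes $\psi_j:=P_{\omega,m}(\min(u_j,v_j))$, and then pass to the limit using the weak continuity of the complex Hessian operator along uniformly bounded decreasing $(\omega,m)$-subharmonic sequences together with $\psi_j\searrow\psi$. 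The argument above then goes through verbatim at each level $j$, and letting $j\to+\infty$ delivers the inequality for $\psi$.
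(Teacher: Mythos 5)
Your two chains of inequalities are exactly the paper's argument: for (1) the maximum principle (Lemma~\ref{Lem:maxp}) applied to $\max(u,v)$ with the hypothesis on $v$ read as $e^{\lambda v}g\,dV_X$ (indeed a typo in the statement), and for (2) the concentration of $(\omega+\ddc\psi)^m\wedge\omega^{n-m}$ on the contact set followed by the second statement of Lemma~\ref{Lem:maxp}. Two remarks, one cosmetic and one substantive. Cosmetic: your sets $A_u=\{\psi=u\le v\}$ and $A_v=\{\psi=v\le u\}$ overlap on $\{u=v=\psi\}$, so ``summing the two pieces'' should be replaced by a disjoint splitting (the paper uses $\{\psi=u<v\}\cup\{\psi=v\}$), or by the observation that a measure dominated by $e^{\lambda\psi}\max(f,g)dV_X$ on $A_u$ and on $A_v$ separately is dominated by it on the union; easily repaired.

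The substantive issue is your proposed justification of the concentration statement for the non-continuous obstacle $\min(u,v)$. Your plan is to regularize $u_j\searrow u$, $v_j\searrow v$ by continuous $(\omega,m)$-subharmonic functions and claim that ``the argument goes through verbatim at each level $j$''; it does not, because the hypotheses $(\omega+\ddc u_j)^m\wedge\omega^{n-m}\le e^{\lambda u_j}f\,dV_X$ are \emph{not} inherited by the regularizations: upper bounds on Hessian measures are destroyed by regularization (only lower bounds survive, which is why this trick works for part (1) of Lemma~\ref{Lem:maxp} but not here). The only thing available at level $j$ is the concentration of the measure of $\psi_j:=P_{\omega,m}(\min(u_j,v_j))$ on $\{\psi_j=\min(u_j,v_j)\}$, and transferring this to the limit is not a consequence of weak continuity of the Hessian operator along decreasing sequences alone, since the contact set $\{\psi=\min(u,v)\}$ is neither open nor closed and indicator functions do not pass to weak limits. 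The honest route (and what the paper implicitly does by citing Proposition~\ref{prop: concentrated} without comment) is to invoke the concentration/orthogonality result for envelopes with bounded upper semicontinuous (or quasi-continuous) obstacles, which is available in the envelope literature cited there; alternatively one can approximate the usc obstacle $\min(u,v)$ itself from above by continuous functions and use the quasi-continuity of $u,v$ to control the set $\{\psi<\min(u,v)\}$, but that requires a genuine additional argument, not the verbatim repetition you describe.
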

The proof follows the same strategy as that for the Monge–Ampère measures presented in \cite[Lemma 1.9]{GL2022}.
\begin{proof}
Let $\psi \coloneqq \max(u,v)$. As a consequence of the maximum principle (Lemma~\ref{Lem:maxp}), we have 
\[
\begin{aligned}
(\omega + \ddc \psi)^m \wedge \omega^{n-m} \ge & \1_{\{ u \ge v\} } (\omega + \ddc u)^m \wedge \omega^{n-m}+\1_{\{ v >u\} }(\omega + \ddc v)^m \wedge \omega^{n-m} \\
\ge & \1_{\{ u \ge v\} }  e^{\lambda u}fdV_X +\1_{\{ v >u\} }e^{\lambda v}gdV_X \\
= & \1_{\{ u \ge v\} }  e^{\lambda \psi}fdV_X +\1_{\{ v >u\} }e^{\lambda \psi}gdV_X 
\ge   e^{\lambda \psi} \min(f,g) dV_X. 
\end{aligned}
\]
Set $\varphi \coloneqq P_{\omega,m}(\min(u,v))$. We prove the second statement. It follows from the proposition~\ref{prop: concentrated} that the complex Hessian measure $(\omega+\ddc \varphi)^m \wedge \omega^{n-m}$ has support in
\[
\left\{ P_{\omega,m}(\min(u,v)) =  \min(u,v)\right\} = \{ P_{\omega,m}(\min(u,v)) =  u <v\} \cup \{ P_{\omega,m}(\min(u,v)) =  v\}.
\]
We therefore obtain that
\begin{equation} \label{equa:maxp}
(\omega + \ddc \varphi)^m \wedge \omega^{n-m} \le \1_{ \{ \varphi =  u <v\}} (\omega + \ddc \varphi)^m \wedge \omega^{n-m} + \1_{\{ \varphi =  v\} } (\omega + \ddc \varphi)^m \wedge \omega^{n-m}.
\end{equation}
Moreover, since $\varphi \le u$ and $\varphi \le v$, the maximum principle (Lemma~\ref{Lem:maxp}) yields
\[
\1_{ \{ \varphi =  u\}} (\omega + \ddc \varphi)^m \wedge \omega^{n-m} \le \1_{ \{ \varphi =  u\}} (\omega + \ddc u)^m \wedge \omega^{n-m},
\]
and
\[\1_{ \{ \varphi =  v\}} (\omega + \ddc \varphi)^m \wedge \omega^{n-m} \le \1_{ \{ \varphi =  v\}} (\omega + \ddc v)^m \wedge \omega^{n-m}.\]
Together with \eqref{equa:maxp}, we deduce that
\[
\begin{aligned}
(\omega + \ddc \varphi)^m \wedge \omega^{n-m} \le &\1_{ \{ \varphi =  u <v\}} (\omega + \ddc u)^m \wedge \omega^{n-m} + \1_{\{ \varphi =  v\} } (\omega + \ddc v)^m \wedge \omega^{n-m}\\
\le& \1_{ \{ \varphi =  u <v\}} e^{\lambda u}fdV_X + \1_{\{ \varphi =  v\} } e^{\lambda v }g dV_X\\
\le& \1_{ \{ \varphi =  u <v\}} e^{\lambda \varphi}fdV_X + \1_{\{ \varphi =  v\} } e^{\lambda \varphi }g dV_X
\le e^{\lambda \varphi} \max(f,g) dV_X.
\end{aligned}
\]
\end{proof}

\section{Uniform \texorpdfstring{$L^{\infty}$-}{}estimates} \label{Sec:unifest}
In this section, we aim to establish a uniform $L^{\infty}$-estimate for solutions $\varphi \in \PSH(X,\omega) \cap L^{\infty}(X)$ to the Monge--Amp\`ere equation:
\begin{equation} \tag{MA}  \label{equa:globMA}
 (\omega + \ddc \varphi)^n \le f d V_X,  
\end{equation}
assuming $f \in L^{\chi}$, and the associated weight function $\chi$ satisfies \hyperref[Condition K]{Condition (K)}. We then generalize it to the complex Hessian equation.

For the sake of convenience, we fix 
\[
\chi(t) = t \log^n (t+1) (h \circ\log \circ \log (t+3))^n.
\]
We fix a constant $C_0 >0$ such that $\|f \|_{\chi} \le C_0 \int_X \chi(f)dV_X$ for all $f \in L^{\chi}$.
\subsection{Local \texorpdfstring{$L^{\infty}$-}{}bound of Monge--Amp\`ere operators }
Let $\Omega$ be the unit ball in $\C^n$. We define $g \coloneqq \frac{f }{ \| f\|_{\chi} }$ to normalize $f$. We now consider the following Dirichlet problem to find a plurisubharmonic solution $u$:
\begin{equation} \tag{LocMA} \label{equation:locMA}
   \begin{aligned}  
   (\ddc  u)^n &= g dV_X \text{, in }\Omega;\\
   u &= 0 \text{, on }\partial \Omega.
   \end{aligned}
\end{equation}
We now establish a local uniform estimate for $u$.
We start by establishing a useful result, which, for K\"ahler manifold, was shown in \cite[Theorem 3.3]{DDL21} and \cite[Lemma 2.2]{gl2025}. We present here a local version. Its proof is simpler than that of the global case, as plurisubharmonicity is preserved under addition.
\begin{lem} \label{lem: ineql est}
    Assume that $f \in L^p(\Omega, dV_X)$ for some $p >1$, and that $u$ is normalized so that $\sup_{\Omega}u =0$ and $u=0$ on $\partial \Omega$. Suppose that there exists a bounded function $v \in PSH(\Omega)$ normalized so that $\sup_{\Omega} v=0$ and $v=0$ on $\partial \Omega$, such that \[
    (\ddc u)^n \le (\ddc v)^n + fdV_X,
    \]
    then $ u \ge -C$, where $C=C(n,p, \|f\|_p, \|v\|_{\infty})$.
\end{lem}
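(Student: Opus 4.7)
The strategy is to produce an explicit lower barrier for $u$ by solving an auxiliary Monge--Amp\`ere equation and exploiting the additive structure available in the local setting. Using Theorem~\ref{thm:Cegrell98}, let $w \in \mathcal{F}(\Omega)$ be the Cegrell solution of
\[
(\ddc w)^n = f\, dV_X \ \text{in}\ \Omega, \qquad w = 0 \ \text{on}\ \partial\Omega.
\]
The measure $f\, dV_X$ has finite total mass, since $f \in L^p$ on the bounded domain $\Omega$, and it is absolutely continuous, hence vanishes on pluripolar sets, so Theorem~\ref{thm:Cegrell98} applies. By the classical local $L^{\infty}$ estimate of Ko\l odziej for Monge--Amp\`ere equations with $L^p$ density, $p>1$, the solution $w$ is bounded with $\|w\|_{\infty}\leq C_1(n,p,\|f\|_p)$, and extends continuously to $\partial\Omega$.

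Set $\psi \coloneqq v+w \in \PSH(\Omega)\cap L^{\infty}(\Omega)$. The binomial expansion
\[
(\ddc \psi)^n=\sum_{k=0}^n \binom{n}{k}(\ddc v)^k\wedge(\ddc w)^{n-k}
\]
is a sum of nonnegative mixed Monge--Amp\`ere currents, so $(\ddc\psi)^n\ge(\ddc v)^n+(\ddc w)^n$. Combining with the hypothesis,
\[
(\ddc \psi)^n \;\ge\; (\ddc v)^n + f\, dV_X \;\ge\; (\ddc u)^n \text{ in }\Omega,
\]
while $u=\psi=0$ on $\partial\Omega$. The comparison principle (Proposition~\ref{prop:CP} with $m=n$) applied to the bounded psh functions $u$ and $\psi$ then yields $u\ge\psi$ throughout $\Omega$, and therefore
\[
u \;\ge\; v+w \;\ge\; -\|v\|_{\infty}-C_1(n,p,\|f\|_p),
\]
which is the asserted lower bound.

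The proof is fairly direct; the only nontrivial analytic input is Ko\l odziej's classical local $L^p$ estimate, which supplies the $L^{\infty}$ bound on $w$ (without it, the sum $v+w$ could be unbounded from below and one would have to invoke a comparison statement in the Cegrell class $\mathcal{F}(\Omega)$). The observation in the remark preceding the lemma is then transparent: the freedom to add two psh functions while staying psh, which fails globally because of the cohomology class of $\omega$, makes the local argument considerably lighter than its global Hermitian counterpart, where one must work with an envelope or a global auxiliary Monge--Amp\`ere equation instead.
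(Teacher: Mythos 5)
Your proof is correct and takes essentially the same route as the paper: both solve the auxiliary Dirichlet problem $(\ddc \rho)^n = f\,dV_X$ with zero boundary values, bound it uniformly via Ko\l odziej's local $L^p$ estimate, use the barrier $\psi = v + \rho$ together with superadditivity of the Monge--Amp\`ere operator, and conclude $u \ge \psi$ by the comparison principle. The only cosmetic difference is that you invoke Cegrell's theorem explicitly for existence, whereas the paper gets existence and the bound directly from the cited local results.
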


\begin{proof}
     Let $\rho \in \PSH(\Omega)$ be the solution to the Monge--Amp\`ere equation \[
    (\ddc \rho)^n =f dV_X\text{ in }\Omega; \; \rho=0 \text{ on }\partial\Omega.
    \]
    The uniform bound for $\rho$ can be deduced from the results in \cite{Kolodziej1998,GL21}.
    Consider now $\psi =  v + \rho$. It is sufficient to show that $\psi \le u$.
    Indeed, $\psi$ satisfies
    \[
    (\ddc \psi)^n  \ge  (\ddc v)^n +  (\ddc \rho)^n =  (\ddc v)^n +  f dV_X \ge (\ddc u)^n.
    \]
    It thus follows from the Comparison Principle (Proposition~\ref{prop:CP}) that $u \ge \psi $ on $\Omega$.
\end{proof}

We are now prepared to prove a local uniform estimate.
\begin{lem}[Local uniform estimate for Monge--Amp\`ere equations] \label{lem:locMA}
    Assume that $u$ is a bounded plurisubharmonic solution to the Dirichlet problem \eqref{equation:locMA}. Then 
    \[
    \osc_{\Omega} u \le C,
    \]
    where $C=C(n, \Omega)$. 
\end{lem}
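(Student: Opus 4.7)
My plan is to adapt the approach of Guedj--Lu \cite{gl2025} to the local setting, exploiting the fact that plurisubharmonicity is preserved under addition in $\C^n$. The strategy is to construct a bounded plurisubharmonic subsolution by summing solutions to localized Dirichlet problems with bounded densities, and then apply Lemma \ref{lem: ineql est}. Since $u \in \PSH(\Omega)$ with $u=0$ on $\partial \Omega$, the maximum principle gives $u \le 0$ and $\sup_{\Omega}u=0$, so it suffices to bound $-\inf_\Omega u$ from above by a constant depending only on $n, \Omega, \chi$.

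For each $k \ge 0$, let $E_k := \{2^k < g \le 2^{k+1}\}$, and let $v_k$ denote the plurisubharmonic solution of the Dirichlet problem $(\ddc v_k)^n = g \1_{E_k} dV_X$ with $v_k = 0$ on $\partial \Omega$. Each $v_k$ exists and is bounded by Yau's theorem applied to the $L^{\infty}$ density $g \1_{E_k} \le 2^{k+1}$. The Orlicz normalization $\|g\|_\chi \le 1$ gives $\int_\Omega \chi(g)\, dV_X \le C_0$, whence the measure estimate
\[
|E_k| \le \frac{C}{\chi(2^k)} \lesssim \frac{1}{2^k k^n h(\log k)^n}.
\]
I then plan to invoke a local Kolodziej-type $L^{p_k}$ estimate with a $k$-dependent exponent $p_k \to 1^+$ (concretely $p_k = 1 + 1/k$) to obtain $\|v_k\|_\infty \lesssim 1/(k h(\log k))$ for large $k$.

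Set $v := \sum_{k \ge 0} v_k$. Because sums of plurisubharmonic functions are plurisubharmonic in the local setting, $v \in \PSH(\Omega)$ with $v=0$ on $\partial \Omega$, and the convergence $\sum_k 1/(k h(\log k)) < \infty$ (equivalent to $\int^{+\infty} dt/h(t) < \infty$ via the substitution $t=\log k$) yields $v \in L^{\infty}(\Omega)$. Iterating the pointwise inequality $(\ddc (w_1 + w_2))^n \ge (\ddc w_1)^n + (\ddc w_2)^n$ for psh functions gives $(\ddc v)^n \ge \sum_{k\ge 0}(\ddc v_k)^n = g \1_{\{g > 1\}} dV_X$. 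Writing $g = g\1_{\{g \le 1\}} + g\1_{\{g > 1\}}$, we then obtain
\[
(\ddc u)^n = g\, dV_X \le (\ddc v)^n + g \1_{\{g \le 1\}} dV_X,
\]
with $g \1_{\{g \le 1\}} \in L^{\infty} \subset L^p$ for every $p>1$. Lemma \ref{lem: ineql est} thus produces $u \ge -C(n, p, \|v\|_\infty, \|g\1_{\{g \le 1\}}\|_p)$, yielding $\osc_\Omega u \le C(n, \Omega, \chi)$.

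The main obstacle is the estimate $\|v_k\|_\infty \lesssim 1/(k h(\log k))$. The classical Kolodziej $L^{p}$ constant may blow up as $p \to 1^+$, so the choice of $p_k$ must be carefully balanced against both this growth and the measure estimate on $E_k$. The alternative is to use a direct Orlicz-type estimate in the spirit of \cite{gl2025}, which avoids any reference to the conjugate norm $\chi^{*}$ and streamlines the balancing argument; either way, the optimality of $\int^{+\infty} h^{-1}(t)\,dt < \infty$ in Condition \hyperref[Condition K]{(K)} is precisely what guarantees summability in Step~3.
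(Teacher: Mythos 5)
Your overall architecture (build a bounded psh function $v$ with $(\ddc v)^n$ dominating the bad part of $g\,dV_X$, then invoke Lemma~\ref{lem: ineql est}) is sound, but the step you yourself flag as the main obstacle is a genuine gap, and it cannot be repaired in the form you propose. The bound $\|v_k\|_\infty \lesssim 1/(k\,h(\log k))$ would require Ko{\l}odziej's local $L^{p}$ estimate $\|v_k\|_\infty \le C(n,p_k,\Omega)\|g\1_{E_k}\|_{p_k}^{1/n}$ with a constant that stays bounded as $p_k\to 1^+$. That is false: if $C(n,p,\Omega)$ were uniformly bounded for $p$ near $1$, then letting $p\to1$ for bounded densities would give $\|u\|_\infty\le C(n,\Omega)\|f\|_{L^1}^{1/n}$, hence (by approximation) uniform bounds for all $L^1$ densities, contradicting the existence of unbounded potentials with integrable Monge--Amp\`ere density. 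Quantitatively, suppose $C(n,p,\Omega)\lesssim q^{\gamma}$ with $q=p/(p-1)$ and some $\gamma>0$ (even $\gamma$ arbitrarily small, or logarithmic blow-up). Writing $p_k=1+c_k/k$, your measure estimate gives
\[
C(n,p_k,\Omega)\,\|g\1_{E_k}\|_{p_k}^{1/n}\;\gtrsim\;\Bigl(\tfrac{k}{c_k}\Bigr)^{\gamma}\,2^{c_k/n}\,\frac{1}{k\,h(\log k)},
\]
and minimizing $(k/c)^{\gamma}2^{c/n}$ over $c>0$ still leaves a factor of order $k^{\gamma}$; so for borderline admissible weights such as $h(t)=t(\log t)^2$ the series $\sum_k k^{\gamma-1}/h(\log k)$ diverges, no matter how the exponents $p_k$ are balanced. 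In other words, the dyadic $L^{p}$ scheme can only reach Condition (K) if the $L^p$ constant does not blow up at all, which is exactly what fails; and your fallback (``a direct Orlicz-type estimate in the spirit of \cite{gl2025}'') is essentially the statement being proved, so it cannot be invoked.

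For comparison, the paper avoids any $p\to1$ limit: it solves one auxiliary Dirichlet problem $(\ddc v)^n=\chi(g)\,dV_X$ (mass $\le\chi(1)$, $v\in\mathcal F(\Omega)$ possibly unbounded), controls the region $\{\log(g+3)<-\alpha v\}$ via exponential integrability of $v$, and on the complementary region composes $v$ with a \emph{bounded} convex increasing weight $\Theta$ built from $h$ --- boundedness of $\Theta$ is precisely where $\int^{+\infty}h^{-1}(t)\,dt<+\infty$ enters --- so that $(\ddc(\Theta\circ v))^n\ge g\,dV_X$ there; Lemma~\ref{lem: ineql est} then concludes. If you want to salvage your decomposition idea, you would need on each piece an estimate whose constant is genuinely uniform in the integrability exponent, which in effect forces you back to an auxiliary-potential argument of the paper's type rather than the classical $L^{p_k}$ bounds.
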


\begin{proof}
Let $v \in \mathcal{F}(\Omega)$ be such that
\[
(\ddc v)^n =  \chi \left( g \right) dV_X \text{, in }\Omega;\; v=0  \text{, on } \partial \Omega.
\]
The existence of $v\in \mathcal{F}(\Omega)$ is ensured by Theorem~\ref{thm:Cegrell98}. Let $\alpha>0$ be fixed (to be specified later). On the set $\left\{ \log \left( g+3 \right) < - \alpha v \right\}$ we have
\[
 (\ddc u)^n \le (e^{- \alpha v} -3 ) dV_X .
\]
We are going to show that $e^{-\alpha v }\in L^2$. To see this, observe that
\begin{equation} \label{equ:estlocv}
\int_{\Omega} (\ddc v)^n = \int_{\Omega} \chi\left( g\right) dV_X \le \chi(1).
\end{equation}
We now choose $\alpha = \frac{1}{\chi(1) }$. It follows from \cite[Theorem B]{ACK+} that
\[
\int_{\Omega} e^{-2 \alpha v } \le C_0(n,\Omega).
\]
We proceed to bound $\1_{\left\{\log  \left( g +3 \right) \ge - \alpha v \right\}} (\ddc u)^n$. 
Let $\Theta : \R^- \rightarrow \R^-$ be a convex increasing function. Then $\Theta \circ v$ satisfies 
\[
\Theta \circ v \in \PSH(\Omega) \text{, and } ( \ddc(\Theta \circ v ))^n=\left( \Theta''(v) dv \wedge d^c v + \Theta'(v) \ddc v\right)^n \ge (\Theta'\circ v)^n(\ddc v)^n.
\]
On the set $\left\{\log  \left( g +3\right) \ge -\alpha v  \right\}  $, we deduce that
\begin{equation} \label{equ:Thev}
( \ddc(\Theta \circ v ))^n \ge (\Theta'\circ v)^n(\ddc v)^n \ge 
\left(\Theta'\left( \frac{-\log  \left( g  +3\right)}{\alpha}\right)\right)^n  \chi \left( g \right) dV_X.
\end{equation}
We aim to construct a uniformly bounded function $\Theta$ such that, for every $t >1$, the following inequality is satisfied:
\begin{equation} \label{Formule: Conlam}
\left(\Theta'\left(\frac{-t}{\alpha}\right)\right)^n  t^n (h \circ \log t)^n \ge 2 .
\end{equation}
To see this, let $x = \log t$ and let 
\[
b(x)= - \Theta \left(\frac{-e^x }{\alpha} \right).
\]
Then the condition \eqref{Formule: Conlam} is equivalent to $b'(x)h(x) \ge \frac{2^{1/n}}{\alpha}$, for all $x>0$. We define $b$ by \[b(x) \coloneqq \int_0^{x}\frac{2^{1/n}}{\alpha h(s)} ds.\]
Since $h^{-1}$ is integrable, the function $b(x)$ remains bounded, which in turn implies that $\Theta$ is uniformly bounded, as claimed.
Taking $t=\log  \left( g +3\right)$ in \eqref{Formule: Conlam} yields that
\begin{equation} \label{formule:Theta}
\left(\Theta'\left(\frac{-\log  \left(g +3\right)}{\alpha}\right)\right)^n \left(\log  \left( g +3\right) \right)^n \left(h \circ \log \log  \left( g +3\right) \right)^n \ge 2.
\end{equation}
Without loss of generality, we assume that there exist $k_0 >0$ such that
\[
\chi(k)\ge \frac{1}{2}k\log^n (k+3) (h \circ\log \circ \log (k+3))^n, \;\forall k \ge k_0.
\]
From \eqref{equ:Thev} and \eqref{formule:Theta}, it follows that on the set $\{\log \left( g +3 \right) \ge -\alpha v \} \cap \{ g \ge k_0\}$, \[
( \ddc(\Theta \circ v ))^n \ge  g dV_X =  (\ddc u)^n.
\]
On the other hand, on the set $\{\log \left( g +3 \right) \ge -\alpha v \} \cap \{ g < k_0\}$, we deduce that
\[
(\ddc u)^n < k_0 dV_X.
\]
From the preceding arguments, we arrive at the result that
\begin{equation} 
(\ddc u)^n \le (e^{ -\alpha v} +k_0 ) dV_X  +  \left( \ddc(\Theta \circ v )\right)^n.
\end{equation}
It follows from Lemma~\ref{lem: ineql est} that $\osc_{\Omega} u \le C(n, \Omega )$.
\end{proof}

\subsection{Global \texorpdfstring{$L^{\infty}$-}{} bounds } We now provide a proof of the global $L^{\infty}$ bound for the solution to the Monge--Amp\`ere equations on Hermitian manifolds, that only relies on local solutions. The strategy is to construct a bounded subsolution by solving equations on each small ball, and then to show that the uniform boundedness of these subsolutions implies a uniform bound for the solution, using domination principle. This method is adapted from the work of Guedj--Lu, in \cite{GL21}.
We start by establishing a subsolution lemma.

\begin{lem}[Subsolution Lemma] \label{lem:subsol}
Assume $\chi$ satisfies \hyperref[Condition K]{Condition (K)}. Then 
there exist a uniform constant $s=s(\chi)>0$ such that for all $0\leq g\in L^{\chi}$, we can find $\psi\in \PSH(X,\omega)$, $-1\leq \psi \leq 0$, with
  \[
  (\omega +\ddc \psi)^n \geq  s(\chi) \frac{g}{\|g\|_{\chi}}  dV_X.
  \]  
\end{lem}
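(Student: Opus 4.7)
The strategy is to patch local Dirichlet solutions into a global $\omega$-plurisubharmonic subsolution, adapting Guedj--Lu's Kähler construction \cite{GL21, gl2025} to the Hermitian setting. I will cover $X$ by finitely many coordinate balls $\{B_j\}_{j=1}^N$, each biholomorphic to the unit ball in $\mathbb{C}^n$, such that the shrunken balls $B_j' \Subset B_j$ still cover $X$. On each $B_j$, Theorem~\ref{thm:Cegrell98} provides a unique plurisubharmonic solution $u_j \in \mathcal{F}(B_j)$ of
\[
(\ddc u_j)^n = \frac{g}{\|g\|_\chi}\, dV_X \text{ in } B_j, \qquad u_j|_{\partial B_j} = 0,
\]
and Lemma~\ref{lem:locMA} yields the uniform oscillation bound $-C_1 \leq u_j \leq 0$, with $C_1 = C_1(\chi, X, \omega)$ independent of $g$.

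Next, for each $j$ I construct a globally defined $\tilde u_j \in \PSH(X, \omega) \cap L^\infty(X)$ with $-C_2 \leq \tilde u_j \leq 0$ for a uniform $C_2 > 0$, such that $(\omega + \ddc \tilde u_j)^n \geq (\ddc u_j)^n$ on $B_j'$. Choose a smooth quasi-potential $\rho_j(z_j) = A|z_j|^2$ on $B_j$ with $A$ large enough so that $\ddc \rho_j \geq \omega$ on $\bar B_j$; then $u_j + \rho_j$ is plurisubharmonic on $B_j$ with $(\omega + \ddc(u_j + \rho_j))^n \geq (\ddc u_j)^n = g/\|g\|_\chi\, dV_X$. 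The remaining task is to extend $u_j + \rho_j$ from $B_j$ to a globally $\omega$-psh function on $X$: select a smooth $\omega$-psh auxiliary $\phi_j$ on $X$ adapted to the boundary values of $u_j + \rho_j$ on $\partial B_j$, and set $\tilde u_j := \max(u_j + \rho_j + c_j, \phi_j)$ for a shift $c_j$ chosen so that the maximum equals $\phi_j$ in a collar neighborhood of $\partial B_j$ (making the gluing $\omega$-psh across the interface) while equalling $u_j + \rho_j + c_j$ on $B_j'$.

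Finally, I set $\psi := \frac{1}{NC_2}\sum_{j=1}^N \tilde u_j$. Then $-1 \leq \psi \leq 0$ by construction, and for $C_2 \geq 1$,
\[
\omega + \ddc \psi \ = \ \tfrac{1}{NC_2}(\omega + \ddc \tilde u_j) \ + \ \sum_{k \neq j}\tfrac{1}{NC_2}(\omega + \ddc \tilde u_k) \ + \ \Bigl(1 - \tfrac{1}{C_2}\Bigr)\omega
\]
is a sum of three non-negative $(1,1)$-currents for each $j$, giving $\omega + \ddc \psi \geq \tfrac{1}{NC_2}(\omega + \ddc \tilde u_j)$. Raising to the $n$-th power preserves the inequality between positive $(1,1)$-forms (since the difference expands into a sum of wedge products of non-negative forms), so on $B_j'$
\[
(\omega + \ddc \psi)^n \ \geq \ \tfrac{1}{(NC_2)^n}(\omega + \ddc \tilde u_j)^n \ \geq \ \tfrac{1}{(NC_2)^n}\,\frac{g}{\|g\|_\chi}\, dV_X,
\]
and since $X = \bigcup_j B_j'$ this holds on $X$. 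The lemma follows with $s(\chi) := (NC_2)^{-n}$.

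The main technical obstacle will be the globalization step (the construction of $\tilde u_j$). In the Kähler case this is facilitated by honest local potentials $\omega = \ddc \rho_j$, but on a Hermitian manifold one has only the inequality $\ddc \rho_j \geq \omega$, and the non-vanishing outward normal derivative of the Dirichlet solution $u_j$ at $\partial B_j$ rules out naive extensions by a constant or by zero, which fail to be $\omega$-psh across the interface. The construction of the auxiliary $\phi_j$ and shift $c_j$ in the max must therefore be balanced so that $\tilde u_j$ coincides with $\phi_j$ in a neighborhood of $\partial B_j$ (killing the interface contribution to $\omega + \ddc \tilde u_j$) while retaining $u_j + \rho_j + c_j$ on $B_j'$ so as to preserve the Monge--Ampère lower bound there, using the uniform oscillation bound for $u_j$ from Lemma~\ref{lem:locMA} and the vanishing of $\rho_j$ on $\partial B_j$.
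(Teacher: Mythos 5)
Your proposal is correct and follows essentially the same route as the paper: solve the local Dirichlet problems, use the uniform local estimate of Lemma~\ref{lem:locMA}, glue each local solution to a global $\omega$-plurisubharmonic function by taking a maximum with a quasi-plurisubharmonic function dominating near the boundary of the chart, and average the glued functions, so that $s(\chi)$ depends only on the fixed cover and the uniform oscillation bound. The only cosmetic slip is that $\rho_j=A|z_j|^2$ does not vanish on $\partial B_j$ as claimed in your last paragraph (take $A(|z_j|^2-1)$ instead), which does not affect the argument.
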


\begin{proof}
 Cover $X$ by balls, $X \subset \cup_{j=1}^N B_j$. Choose $B_j'=\{\rho_j <0 \}$, where $\rho_j: X \rightarrow \R$ are smooth functions such that $\omega+ \ddc \rho_j>0$ in a neighborhood of $B_j'$, and such that $B_j \Subset B_j^{'}$. In $B_j'$, let $v_j \in \PSH(B_j')$ be solution to Monge--Amp\`ere equation \[
(\ddc v_j)^n = \frac{g}{\|g\|_{\chi} } dV_X \text{ in }B_j'; \; v_j = -1 \text{ on }\partial \Omega.
\]
The local $L^{\infty}$-estimate (Lemma~\ref{lem:locMA}) implies $\osc_{B_j'}v_j \le C_j'$. We define $\psi_j \coloneqq \max (v_j, \xi_j \rho_j)$, where $\xi_j >1$ for all $j$. Thus, $\psi_j$ coincides with $v_j$ in $B_j$, and coincides with $\xi_j \rho_j$ in $X \setminus B_j'$ and in the neighborhood of $\partial B_j'$. Moreover, $\psi_j$ is $\delta^{-1}\omega$-plurisubharmonic for some uniform constant $\delta>0$ in $B_j$. Taking $\psi\coloneqq \frac{1}{N}\sum_{j=1}^N \delta \psi_j$, and taking $0 <s(\chi) \le \frac{\delta^n}{N^n}$, we infer that in $B_j$,
   \[
   \omega_{\psi} ^n \ge \frac{\delta^n}{N^n}(\omega+\ddc \psi_j)^n \ge s(\chi) (\ddc v_j)^n dV_X = s(\chi) \frac{g }{\|g\|_{\chi} }dV_X.
   \]
   Therefore, $\psi$ is the subsolution that we are looking for.   
\end{proof}

\begin{thm}[A priori estimates for Monge--Amp\`ere equations] \label{Thm:uniMA}
Let $\chi$ be as above, and let $f \in L^{\chi}$ with $f \ge 0$. Suppose that $\varphi\in \PSH(X,\omega)\cap L^{\infty}(X)$ satisfies 
  \[
  (\omega+dd^c \varphi)^n \leq fdV_X. 
  \]
  Let $s\coloneqq s(\chi)$ be the constant in Lemma \ref{lem:subsol} and assume that $A>0$ satisfies 
  \[
  \|\1_{\{f >A\} }f \|_{\chi}\leq \frac{s(\chi)}{4}.
  \]
  Then
  $Osc_X(\varphi)\leq C$, where $C$ depends on $A$, and $\chi$. 
\end{thm}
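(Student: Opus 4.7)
The plan is to combine the Subsolution Lemma~\ref{lem:subsol} with the Domination Principle (Proposition~\ref{prop: DP}), splitting the density as $f = f\1_{\{f \le A\}} + f\1_{\{f > A\}}$. First I would apply Lemma~\ref{lem:subsol} to $g := f\1_{\{f > A\}}$: since $\|g\|_\chi \le s(\chi)/4$ by hypothesis, there exists $\psi \in \PSH(X,\omega)$ with $-1 \le \psi \le 0$ and $(\omega + \ddc \psi)^n \ge 4\, f\1_{\{f > A\}}\, dV_X$. The factor $4$ is calibrated precisely to fit the Domination Principle on the tail region $\{f > A\}$; this is exactly what the threshold $s(\chi)/4$ in the hypothesis is engineered for.

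Normalize $\sup_X \varphi = 0$; it then suffices to bound $\inf_X \varphi$ from below. For parameters $\epsilon,\delta \in (0,1)$ with $\epsilon + \delta < 1$ and a constant $\tau \in \R$, I would consider the convex combination $v := \epsilon \psi + \delta \eta + (1-\epsilon-\delta)\varphi + \tau \in \PSH(X,\omega)$, where $\eta$ is a second subsolution to be described below. From $\omega_v \ge \epsilon \omega_\psi + \delta \omega_\eta$ one gets $\omega_v^n \ge \epsilon^n \omega_\psi^n + \delta^n \omega_\eta^n$, and the goal is to verify the Domination hypothesis $\omega_\varphi^n \le c_0\, \omega_v^n$ with $c_0 < 1$; this splits naturally along $\{f > A\}$ and $\{f \le A\}$.

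On $\{f > A\}$, the chain $\omega_\varphi^n \le f\,dV_X \le (1/4)\, \omega_\psi^n \le (1/(4\epsilon^n))\, \omega_v^n$ yields a DP constant strictly below $1$ as soon as $\epsilon > (1/4)^{1/n}$, which is possible because $(1/4)^{1/n} < 1$. On $\{f \le A\}$ only $\omega_\varphi^n \le A\, dV_X$ is available, and $\psi$ provides no useful lower bound on $\omega_v^n$ there; this is the place where the auxiliary subsolution $\eta \in \PSH(X,\omega) \cap L^\infty(X)$, satisfying $\omega_\eta^n \ge K\, dV_X$ for some $K = K(A,\chi) \gg A$ with $\|\eta\|_\infty \le C(A,\chi)$, enters. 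Such an $\eta$ will be obtained by solving a Monge--Amp\`ere equation with bounded density on the Hermitian manifold using the existing $L^\infty$-theory (e.g.\ \cite{Ko_odziej_2016}), after which $\omega_\varphi^n \le A\,dV_X \le (A/(\delta^n K))\, \omega_v^n$ provides a DP constant $< 1$ whenever $\delta > (A/K)^{1/n}$.

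Choosing $K$ large enough that $(1/4)^{1/n} + (A/K)^{1/n} < 1$, the pair $(\epsilon,\delta)$ can be fixed so that both domination constants remain strictly below $1$ while $\epsilon + \delta < 1$. Proposition~\ref{prop: DP} then gives $\varphi \ge v$ on $X$, which rearranges to $(\epsilon+\delta)\varphi \ge \epsilon \psi + \delta \eta + \tau$; combined with $\psi \ge -1$ and the uniform bound on $\eta$, this yields $\inf_X \varphi \ge -C(A,\chi)$, establishing the $L^\infty$-estimate. The \emph{main analytic obstacle} is the construction of the auxiliary subsolution $\eta$ with Monge--Amp\`ere mass $\ge K\,dV_X$: Lemma~\ref{lem:subsol} applied to a constant density only delivers $\omega_\eta^n \ge c_\chi\,dV_X$ with $c_\chi$ independent of $A$, so the bounded part of $f$ forces an external appeal to the $L^\infty$-theory of bounded-density MA equations on Hermitian manifolds. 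The accompanying combinatorial subtlety is that $(1/4)^{1/n} \to 1$ as $n \to \infty$, so $\delta$ must be taken small in high dimensions, which in turn pushes $K$ to be correspondingly large.
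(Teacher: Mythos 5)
Your handling of the tail $\{f>A\}$ is exactly the paper's: apply Lemma~\ref{lem:subsol} to $\1_{\{f>A\}}f$ and use $\|\1_{\{f>A\}}f\|_{\chi}\le s/4$ to get $(\omega+\ddc\psi)^n\ge 4\,\1_{\{f>A\}}f\,dV_X$. The gap is in your treatment of $\{f\le A\}$: the auxiliary function $\eta\in\PSH(X,\omega)\cap L^{\infty}(X)$ with $(\omega+\ddc\eta)^n\ge K\,dV_X$ for $K\gg A$ does not exist in general. The total Monge--Amp\`ere mass of a bounded $\omega$-psh function is bounded independently of any prescribed $K$ (in the K\"ahler case it equals $\int_X\omega^n$; in the Hermitian case it is controlled by $\|\eta\|_{\infty}$ and $\omega$), and more directly the Corollary following Proposition~\ref{prop: DP}, applied with $u=0$, $v=\eta$, $m=n$, forces $K\le \sup_X(\omega^n/dV_X)$: one cannot solve a Monge--Amp\`ere equation on a compact manifold with an arbitrarily large prescribed constant on the right-hand side, since the constant is itself determined by the equation. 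Your scheme needs $K>A\bigl(1-(1/4)^{1/n}\bigr)^{-n}$, and $A$ grows with $\|f\|_{\chi}$, so the construction fails precisely in the regime the theorem is about. A symptom of the problem: your verification produces the global inequality $\omega_{\varphi}^n\le c_0\,\omega_v^n$ with $c_0<1$, uniformly in the constant $\tau$, so Proposition~\ref{prop: DP} would give $\varphi\ge v$ for every $\tau$, which is absurd; equivalently, the Corollary after Proposition~\ref{prop: DP} says that a global inequality with constant $<1$ between Hessian measures of bounded potentials is impossible, so one of your hypotheses (the existence of $\eta$) cannot hold.

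The paper circumvents this by not demanding large density everywhere: after normalizing $\sup_X\varphi=0$, it invokes \cite[Lemma 2.5]{GL21} to produce $\rho\in\PSH(X,\omega)$, $-1\le\rho\le0$, with $(\omega+\ddc\rho)^n\ge 4A\,\1_{\{\varphi<-B\}}\,dV_X$ for $B$ large, so the density $4A$ is only required on the sublevel set $\{\varphi<-B\}$, whose volume is small and hence compatible with the mass bound. The domination principle is then applied on $\{\varphi<\frac{\psi+\rho}{2}-B\}\subset\{\varphi<-B\}$, where $\omega_{\varphi}^n\le\frac14\omega_{\psi}^n+\frac14\omega_{\rho}^n\le\frac12\bigl(\omega+\ddc\frac{\psi+\rho}{2}\bigr)^n$, yielding $\varphi\ge\frac{\psi+\rho}{2}-B\ge-1-B$. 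If you replace your $\eta$ by such a $\rho$ (and drop the superfluous $(1-\epsilon-\delta)\varphi$ and $\tau$ terms), your argument becomes the paper's proof.
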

\begin{proof}
We consider the function \[
g = \frac{\1_{\{f >A\} }f }{\|\1_{\{f >A\} }f \|_{\chi}}.
\]
Applying the Subsolution Lemma (Lemma~\ref{lem:subsol}) to $g$ yields that there exists subsolution $\psi \in \PSH(X,\omega)$, such that
\[
(\omega + \ddc \psi)^n \ge s g dV_X.
\]
The dominated convergence theorem implies that
\[
\lim_{t \rightarrow + \infty}\int_X \chi(\1_{\{ f>t\} }f ) dV_X = \int_X \lim_{t \rightarrow + \infty} \chi \left(\1_{\{ f>t\} }f \right) dV_X  =0.
\]
We can thus choose $A$ sufficiently large, depending only on $f$ and $s$, such that \[ \| \1_{\{ f>A\}  } f\|_{\chi} \le \frac{s}{4} . \]
It follows that 
\[
(\omega + \ddc \psi )^n \ge 4 \1_{\{ f>A\}  } (\omega + \ddc \varphi)^n.
\]
Therefore,
\[
(\omega + \ddc \varphi)^n \le \frac{1}{4 }(\omega + \ddc \psi)^n + A dV_X.
\]
Fix $B>0$ large enough. It follows from \cite[Lemma 2.5]{GL21} that there exist $ \rho \in \PSH(X, \omega), -1 \le \rho \le 0$ such that 
\[
(\omega + \ddc \rho )^n \ge 4 A \1_{\{\varphi < -B\} } dV_X.  
\]
Normalize $\varphi$ such that $\sup_X \varphi =0$. Then on the set $\{ \varphi < \frac{\psi + \rho}{2}-B\} \subset \{ \varphi <-B\} $, we have
\[
(\omega + \ddc \varphi)^n \le \frac{1}{4 } (\omega + \ddc \psi)^n  + \frac{1}{4}(\omega+ \ddc \rho)^n \le \frac{1}{2} \left(\omega + \ddc \frac{\psi + \rho}{2}\right)^n.
\]
We conclude from the domination principle (Proposition~\ref{prop: DP}) that $\varphi \ge \frac{ \psi + \rho}{2}-B \ge -1-B$.
\end{proof}

\subsection{Global \texorpdfstring{$L^{\infty}$-}{} estimates for Hessian equations }

The uniform bound for the Hessian equation follows the method of \cite[Theorem 3.1]{guedj2023degeneratecomplexhessianequations}, which assumes $f \in L^p$ with $p > n/m$. A generalization to Orlicz spaces is outlined in Remark 3.2 therein. It has also been shown in the recent article \cite[Theorem B]{gl2025} that this approach can be adapted in certain general complex geometric PDEs. We provide a more detailed presentation in the proof of \hyperlink{ThmB}{Theorem B} here.

The following lemma will be used to obtain a uniform estimate for bounded solutions.
\begin{lem} \label{Lem:density}
    Assume $f \in L^1(dV_X) $. Let $\varphi  \in \SH_m(X,\omega) \cap L^{\infty}(X)$ be such that 
    \[
    (\omega + \ddc \varphi)^{m}\wedge \omega^{n-m} = f \omega^n.
    \]
    Then, we have $(\omega+ \ddc u)^n \le  f^{n/m} \omega^n$, where $u = P_{\omega}(\varphi) $.
\end{lem}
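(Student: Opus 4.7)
The strategy is to exploit the contact set between $u = P_{\omega}(\varphi)$ and $\varphi$, combining the maximum principle at the level of Hessian measures with the pointwise Maclaurin inequality for positive Hermitian forms.

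First, I would observe that $u \in \PSH(X,\omega) \cap L^{\infty}(X)$ with $u \le \varphi$; since $\Gamma_n(\omega) \subset \Gamma_m(\omega)$, the function $u$ is in particular $(\omega,m)$-subharmonic. Applying the maximum principle (Lemma \ref{Lem:maxp}) to the pair $u \le \varphi$ at the level of the mixed Hessian measure $(\omega+\ddc \cdot)^m \wedge \omega^{n-m}$, and using that $\max(u,\varphi) = \varphi$, one obtains
\[
\1_{\{u = \varphi\}}(\omega+\ddc u)^m \wedge \omega^{n-m} \le \1_{\{u = \varphi\}}(\omega+\ddc\varphi)^m \wedge \omega^{n-m} = \1_{\{u = \varphi\}} f\, \omega^n.
\]

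Second, since $\omega + \ddc u \ge 0$, the pointwise Maclaurin inequality $\sigma_n(\lambda) \le \bigl(\sigma_m(\lambda)/\binom{n}{m}\bigr)^{n/m}$ for nonnegative eigenvalues $\lambda = (\lambda_1,\ldots,\lambda_n)$ of $\omega+\ddc u$ with respect to $\omega$ translates into the form inequality
\[
(\omega+\ddc u)^n \le \left(\frac{(\omega+\ddc u)^m \wedge \omega^{n-m}}{\omega^n}\right)^{n/m} \omega^n.
\]
Combined with the previous display, this yields $(\omega+\ddc u)^n \le f^{n/m}\omega^n$ on the contact set $\{u = \varphi\}$.

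Third, I would argue that $(\omega+\ddc u)^n$ is concentrated on $\{u = \varphi\}$. The complementary set $\{u < \varphi\} = \bigcup_{c \in \Q} \bigl(\{u < c\} \cap \{\varphi > c\}\bigr)$ is open, because both $u$ and $\varphi$ are upper semicontinuous. On this open set $u$ is locally maximal as an $\omega$-psh function: a standard perturbation argument shows that otherwise, by choosing $c$ with $u(x_0) < c < \varphi(x_0)$ and working inside a small ball where $\varphi > c$, one could raise $u$ while staying $\omega$-psh and $\le \varphi$, contradicting the definition of the envelope. By Bedford--Taylor's characterization of maximal $\omega$-psh functions, $(\omega+\ddc u)^n = 0$ on $\{u < \varphi\}$, so the previous inequality globalizes to $(\omega+\ddc u)^n \le f^{n/m}\omega^n$ on $X$, as required.

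The main technical obstacle is the rigorous justification of the Maclaurin step at the level of \emph{measures}, since the mixed Monge--Amp\`ere and Hessian measures of a bounded $\omega$-psh function need not be absolutely continuous a priori and the Maclaurin inequality is naturally a pointwise statement for smooth forms. I would resolve this by approximating $u$ by a decreasing sequence of smooth (possibly $(1+\epsilon)\omega$-psh) functions via Demailly's regularization adapted to the Hermitian setting, applying the inequality pointwise to each smooth term, and passing to the limit through the Bedford--Taylor continuity of Monge--Amp\`ere and mixed Hessian operators along decreasing sequences of bounded quasi-psh functions.
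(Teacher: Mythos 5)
There are two genuine gaps, and both occur exactly where the paper has to work hardest. First, your concentration argument in the third step fails: you claim $\{u<\varphi\}=\bigcup_{c\in\Q}\bigl(\{u<c\}\cap\{\varphi>c\}\bigr)$ is open "because both $u$ and $\varphi$ are upper semicontinuous", but upper semicontinuity of $\varphi$ makes $\{\varphi>c\}$ \emph{closed}-type, not open; openness of $\{\varphi>c\}$ would require $\varphi$ to be lower semicontinuous, i.e.\ continuous, which is precisely not assumed here ($\varphi$ is only a bounded $(\omega,m)$-subharmonic function). The paper's Proposition~\ref{prop: concentrated} on envelopes is stated for \emph{continuous} obstacles for exactly this reason, so you cannot invoke maximality of $u$ on $\{u<\varphi\}$ nor concentration of $(\omega+\ddc u)^n$ on the contact set without a further argument (quasi-continuity of $\varphi$ plus a capacity argument, or an approximation scheme).

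Second, your resolution of the "main technical obstacle" — regularizing $u$ and applying the pointwise Maclaurin inequality to the approximants — does not go through. The hypothesis you have is an upper bound $\1_{\{u=\varphi\}}(\omega+\ddc u)^m\wedge\omega^{n-m}\le f\,\omega^n$ on the \emph{limit} function $u$; this bound does not transfer to Demailly regularizations $u_\epsilon$, and even though the Hessian measures of $u_\epsilon$ converge weakly, the map taking a measure to the $(n/m)$-th power of its density is nonlinear and not stable under weak convergence, so you cannot pass the inequality $(\omega+\ddc u_\epsilon)^n\le\bigl((\omega+\ddc u_\epsilon)^m\wedge\omega^{n-m}/\omega^n\bigr)^{n/m}\omega^n$ to the limit and land on $f^{n/m}\omega^n$. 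The paper circumvents both issues by approximating the \emph{equation} rather than the function: it solves auxiliary Hessian equations $(\omega+\ddc\varphi_j)^m\wedge\omega^{n-m}=e^{\varphi_j-\psi_j}f_j\,\omega^n$ with smooth data (existence by Sz\'ekelyhidi, uniform convergence by Ko{\l}odziej--Nguyen stability, identification of the limit by the domination principle), takes $u_j=P_\omega(\varphi_j)$ which is $\mathcal{C}^{1,1}$ so that the mixed Monge--Amp\`ere/G\aa rding inequality is legitimately applied pointwise a.e., and then passes to the limit using \emph{uniform} convergence of $u_j$ to $u$ and explicit control of the right-hand sides; a second step handles general $f\in L^1$ by truncation. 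Your outline identifies the right ingredients (contact set, maximum principle, Maclaurin inequality) but, as written, both the concentration step and the limiting step would fail for merely bounded $\varphi$ and $u$.
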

\begin{proof} 
    The proof proceeds in two steps. We first treat the case where the density $f$ is bounded. In the general case, we approximate a given $f\in L^{1}$ by a sequence of bounded densities. Normalize the solution $\varphi$ such that $\sup_X \varphi=0$.
    
    \emph{Step 1.} Assume that $f$ is bounded. Let $\{f_j\}$ be a sequence of smooth, uniformly bounded functions such that $f_j \to f$ in $L^{2n}$. It follows from \cite[Lemma 3.20]{Ko_odziej_2016} that there exists a decreasing sequence of $(\omega,m)$-subharmonic functions $\{ \psi_j\}$ such that $\psi_j \to \varphi$. We consider the following complex Hessian equation:
    \[
    (\omega + \ddc \varphi_j)^m \wedge \omega^{n-m} = e^{\varphi_j - \psi_j} f_j \omega^n.
    \]
    The existence of smooth, $(\omega,m)$-subharmonic solution $\varphi_j$ follows from \cite{Sze18}. Then, from \cite[Theorem 3.9]{Ko_odziej_2016} we deduce that $\varphi_j$ converges uniformly to $\widetilde{\varphi}\in \SH_m(X,\omega) \cap \mathcal{C}^0(X)$, which is the solution to
    \[
    (\omega + \ddc \widetilde{\varphi})^m \wedge \omega^{n-m} = e^{\widetilde{\varphi} - \varphi} f \omega^n.
    \]
    By the domination principle (Proposition~\ref{prop: DP}), we obtain that $\widetilde{\varphi} = \varphi$.
    Taking $u_j = P_{\omega}(\varphi_j)$, it follows from Proposition~\ref{prop: concentrated} that $u_j \in \mathcal{C}^{1,1}(X)$. The mixed Monge--Amp\`ere inequality \cite[Lemma 1.19]{NG16} implies that
    \[
    (\omega + \ddc u_j)^n \le e^{\frac{n(\varphi_j - \psi_j)}{m}} f_j^{n/m} \omega^n.
    \]
    Since $u_j$ converges uniformly to $u$, passing to the limit, we conclude that \[(\omega + \ddc u)^n \le  f^{n/m} \omega^n.\]

    \emph{Step 2.} Assume now that $f \in L^{1}(dV_X)$. We consider the equation given by
    \[
    (\omega + \ddc \varphi_j)^m \wedge \omega^{n-m} = e^{\varphi_j - \varphi} \min(f,j) \omega^n.
    \]
   Theorem 3.19 of \cite{KN15weaksol} ensures the existence of a bounded solution $\varphi_j$, satisfying $\varphi_j \ge \varphi$. The domination principle yields that $\varphi_j$ is decreasing to a $(\omega,m)$-subharmonic function $\widetilde{\varphi}$. It follows from the domination principle again that $\widetilde{\varphi}=\varphi$. Taking $u_j \coloneqq P_{\omega}(\varphi_j)$, then $u_j$ decreases to $u$. Moreover, Step 1 yields \[
    (\omega + \ddc u_j)^n \le e^{n/m(\varphi_j - \varphi)} f_j^{n/m} \omega^n.
    \]
    Applying the continuity of Monge--Amp\`ere operator along monotone sequences \cite{BedfordTaylor1976,BedfordTaylor1982}, we obtain
    \[
    (\omega + \ddc u)^n \le  f^{n/m} \omega^n,
    \]
    as required.
\end{proof}

\begin{proof}[Proof of \hyperlink{ThmB}{Theorem B}]
    Normalize $\varphi$ such that $\sup_X \varphi=0$. Let $u=P_{\omega} \left( \varphi\right)$ and let $\mathcal{C}$ denote the contact set $\mathcal{C} \coloneqq \{u=\varphi\}$. We remark that $u \in \PSH(X, \omega) \subset \SH_m(X, \omega)$. Therefore, it follows from the mixed Monge--Amp\`ere inequality \cite[Lemma 1.19]{NG16} and proposition~\ref{prop: concentrated} that 
    \[
    \1_{\mathcal{C}} (\omega +\ddc u)^m \wedge \omega^{n-m} \le \1_{\mathcal{C}} (\omega + \ddc \varphi)^m \wedge \omega^{n-m} \le f dV_X
    \]
We remark that $h_X \coloneqq \omega^n / d V_X \ge \delta_0$ is positive on X. Lemma~\ref{Lem:density} ensures us to take $g \coloneqq \left(\omega+d d^c u\right)^n / d V_X$. Then, the mixed Monge--Amp\`ere inequality yields that
\[
\delta_0^{1-m/n} g^{m/n} \le h_X^{1-m/n} g^{m/n} \le f,
\]
hence $g \le C_0 f^{n/m}$, so that $g \in L^{\chi_0}$, where $\chi_0=  t \log^n(1+t) [h \circ \log \circ \log (3+t)]^n $ satisfies \hyperref[Condition K]{Condition (K)}. It follows from Theorem~\ref{Thm:uniMA} that
\[
\osc_X(u) \le A.
\]
It suffices to bound $\sup_X u$ from below. Set \[v^{-}_A(\omega) \coloneqq \inf \left\{ \int_X (\omega + \ddc \varphi)^n: \varphi\in L^{\infty}(X) \cap \PSH(X, \omega), -A \le \varphi \le 0\right\}.\] Recall that $v_A^{-}(\omega)$ is strictly positive by \cite[Proposition 3.4]{GL2022}. We denote by $(\chi_0^{*})^{-1}$ the inverse of $\chi_0^{*}$. Then it follows from additive H\"older--Young inequality (Proposition~\ref{Ineq: addYoung}) that
$$
\begin{aligned}
v^{-}_A(\omega)(\chi_0^{*})^{-1}\left(-\sup_X u\right) & \le \int_X (\chi_0^{*})^{-1}(-u)\left(\omega+d d^c u\right)^n \\
&= \int_X (\chi_0^{*})^{-1}(-u)g d V_X  \\
&\le \int_X \chi_0(g) d V_X + \int_X (-u) d V_X.
\end{aligned}
$$
We remark that $\int_{X} (-u)dV_X$ is uniformly bounded from above by Lemma~\ref{lem:comp}. Thus we obtain a uniform lower bound of $\sup_X u$, which yields together with $\osc_X u \le A$ that $|u|$ has a uniform bound, and since $u \le \varphi$, the solution $\varphi$ is also uniformly bounded.

\end{proof}

\section{Weak solution} \label{sec:weaksol}

\subsection{Stability estimates}
\begin{thm} \label{Thm:stab}
    Let $u,v \in \SH_m(X,\omega)\cap L^{\infty}(X)$ be such that $\sup_X u =0$ and $v \le 0$. Assume that $u \in \SH_m(X,\omega)$ is a solution to the complex Hessian equation 
    \begin{equation} \label{eqn: stab}
     \omega_u^m \wedge \omega^{n-m}= e^{\lambda u} f \omega^n,   
    \end{equation}
     where $\lambda\ge0$ and $f^{n/m} \in L^{\chi}$ with $\chi$ satisfying \hyperref[Condition K]{Condition (K)}.
    Then,
    \[
    \sup_X (v-u)_{+} \le C \| (v-u)_+f ^{n/m}\|_{\chi}^{\frac{1}{n+1}},
    \]
    where the positive constant $C$ depends only on $n,m$, $\omega$, $\|f^{\frac{n}{m} }\|_{\chi}$, and $\sup_X|v|$.
\end{thm}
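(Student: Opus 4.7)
The plan is to apply the Domination Principle (Proposition~\ref{prop: DP}) to the solution $u$ against a carefully tuned comparison function constructed from the Subsolution Lemma (\hyperlink{LemA}{Lemma A}), then optimize two parameters to produce the exponent $1/(n+1)$. Set $M := \sup_X(v-u)_+$, $B := \sup_X|v|$, and $\Lambda := \|(v-u)_+ f^{n/m}\|_\chi$, and assume $M>0$, otherwise there is nothing to prove. Since $v-u$ is bounded and $f^{n/m}\in L^\chi$, the density $g := (v-u)_+ f^{n/m}$ lies in $L^\chi$. Applying \hyperlink{LemA}{Lemma A} to $g$ furnishes $\psi\in\PSH(X,\omega)\subset\SH_m(X,\omega)$, with $-1\le\psi\le 0$, satisfying
\[
(\omega+\ddc\psi)^n \;\geq\; s(\chi)\,\Lambda^{-1}\,(v-u)_+\,f^{n/m}\,dV_X.
\]
By Maclaurin's inequality applied to the (almost everywhere defined) eigenvalues of $\omega_\psi$ with respect to $\omega$, we have $\omega_\psi^m\wedge\omega^{n-m}\ge c_{n,m}(\omega_\psi^n/\omega^n)^{m/n}\omega^n$; combined with the equivalence $\omega^n\asymp dV_X$, this upgrades the subsolution bound to
\[
\omega_\psi^m\wedge\omega^{n-m} \;\geq\; C_1\,\Lambda^{-m/n}\,(v-u)_+^{m/n}\,f\,dV_X
\]
for some $C_1=C_1(n,m,\omega,\chi)$.

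For parameters $\delta\in(0,1)$ and $b>0$ to be chosen, let $\varphi := (1-\delta)v+\delta\psi-b$, which is $(\omega,m)$-subharmonic as a convex combination of two $(\omega,m)$-subharmonic functions. Expanding $\omega_\varphi=(1-\delta)\omega_v+\delta\omega_\psi$ and keeping only the purely $\psi$ term yields $\omega_\varphi^m\wedge\omega^{n-m}\ge\delta^m\,\omega_\psi^m\wedge\omega^{n-m}$. On the set $\{u<\varphi\}$, unpacking the defining inequality and using $v\ge-B$ together with $\psi\ge-1$ gives $v-u > b-\delta B$, so $(v-u)_+$ is automatically large there. Since $u\le 0$ and $\lambda\ge 0$, the solution measure obeys $\omega_u^m\wedge\omega^{n-m}=e^{\lambda u}f\,\omega^n\le C_0\,f\,dV_X$. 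Comparing the two density bounds, the inequality
\[
\omega_u^m\wedge\omega^{n-m} \;\leq\; \tfrac{1}{2}\,\omega_\varphi^m\wedge\omega^{n-m}
\]
holds on $\{u<\varphi\}$ as soon as $b-\delta B\ge C_2\,\Lambda/\delta^n$, where $C_2=C_2(n,m,\omega,\chi)$. Choosing $b := \delta B+C_2\,\Lambda/\delta^n$ fulfills this constraint, and Proposition~\ref{prop: DP} then forces $u\ge\varphi$ on all of $X$.

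The pointwise inequality $u\ge(1-\delta)v+\delta\psi-b$, combined with $\psi\ge-1$ and $v\le 0$, rearranges to $\sup_X(v-u)\le\delta+b=\delta(1+B)+C_2\,\Lambda/\delta^n$. Minimizing the right-hand side in $\delta$ (optimum $\delta\asymp[\Lambda/(1+B)]^{1/(n+1)}$) yields
\[
\sup_X(v-u)_+ \;\leq\; C\,(1+B)^{n/(n+1)}\,\Lambda^{1/(n+1)},
\]
for $C=C(n,m,\omega,\chi)$. In the borderline regime where the optimal $\delta$ would exceed $1$, $\Lambda$ is already bounded below by a constant depending on $B$ and $\|f^{n/m}\|_\chi$, and the desired estimate follows trivially from \hyperlink{ThmB}{Theorem B} applied to $\sup_X|u|$. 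The main obstacle I anticipate is the coordination among (a) the Maclaurin passage from the $n$-th to the $m$-th mixed Hessian while preserving the exact power of $(v-u)_+$, (b) the pointwise lower bound $(v-u)_+>b-\delta B$ on $\{u<\varphi\}$, and (c) the two-parameter balancing of $(\delta, b)$ that extracts the specific exponent $1/(n+1)$.
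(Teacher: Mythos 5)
Your argument is correct, and it runs on the same engine as the paper's proof: the Subsolution Lemma, the mixed Monge--Amp\`ere/Hessian inequality of \cite{NG16} to pass from the $n$-th power to the $m$-th, positivity of the mixed terms in $\left[(1-\delta)\omega_v+\delta\omega_\psi\right]^m\wedge\omega^{n-m}$, the domination principle, and a final optimization producing the exponent $\frac{1}{n+1}$. The difference is in how the smallness of $\|(v-u)_+f^{n/m}\|_{\chi}$ is injected: the paper applies Lemma A to the truncated density $\1_{\{u<v-\epsilon\}}f^{n/m}$, bounds $\|\1_{\{u<v-\epsilon\}}f^{n/m}\|_\chi\le \epsilon^{-1}\|(v-u)_+f^{n/m}\|_\chi$ by a Chebyshev-type argument, and optimizes over the two parameters $(\epsilon,\delta)$; you instead apply Lemma A directly to the weighted density $(v-u)_+f^{n/m}$ and exploit that $(v-u)_+>b-\delta B$ pointwise on the comparison set $\{u<\varphi\}$, which slaves $b$ to $\delta$ and leaves a one-parameter optimization. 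Your variant avoids the truncation/Chebyshev step and handles $\lambda\ge 0$ in one stroke via $e^{\lambda u}\le 1$, at the cost of the extra shift parameter $b$ and the explicit treatment of the borderline regime $\delta\ge 1$ (correctly reduced to \hyperlink{ThmB}{Theorem B}); the paper's truncation has the mild advantage that the subsolution density is controlled without invoking the pointwise lower bound on the bad set. Two small presentational points: justify the passage $\omega_\psi^m\wedge\omega^{n-m}\ge c\,(\omega_\psi^n/\omega^n)^{m/n}\omega^n$ for merely bounded $\psi$ by citing the mixed Monge--Amp\`ere inequality \cite[Lemma 1.9]{NG16} (as the paper does) rather than via almost-everywhere eigenvalues, and cite \cite[Lemma 2.3]{KN15weaksol} for the nonnegativity of the mixed terms when discarding all but the $\delta^m\omega_\psi^m$ term; also dispose of the degenerate case $\|(v-u)_+f^{n/m}\|_\chi=0$ directly by the domination principle, since then $f=0$ a.e.\ on $\{u<v\}$.
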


\begin{proof}
We consider only the case  $\lambda =0$; when $\lambda >0$, the $L^{\infty}$-estimate implies that $e^{\lambda u}f \le f $, allowing the problem to be reduced to the case $\lambda=0$. 
We now fix $\epsilon>0$, to be chosen later, and define $g_{\epsilon} \coloneqq \1_{\{u <v -\epsilon \} } f^{n/m}$.

By the subsolution lemma (Lemma~\ref{lem:subsol}), there exist an uniform constant $s \coloneqq s(\chi)$, and $-1 \le \psi \le 0$ such that
\[
(\omega+dd^c \psi)^n \geq s \frac{g_{\epsilon}dV_X}{\|g_{\epsilon}\|_{\chi} } .
\]
It follows from mixed Monge--Amp\`ere inequality \cite[Lemma 1.9]{NG16} that 
\[
(\omega+dd^c \psi)^{m} \wedge \omega^{n-m} \geq s^{m/n}  \frac{\1_{\{u <v - \epsilon \} } f}{\|g_{\epsilon}\|_{\chi}^{m/n}}.
\]
We set $\varphi \coloneqq (1- \delta)v + \delta \psi$, where $0<\delta<1$ is to be specified later. Then there exists a positive constant $C_2$, depending only on $\| \psi \|_{\infty}$ and $\sup_X|v|$ such that
\[
\{\varphi > u + C_2\delta +\epsilon\} \subset \{u < v -  \epsilon\}.
\]
From the $(\omega,m)$-subharmonicity of $v$ and $\psi$, and applying \cite[Lemma 2.3]{KN15weaksol}, we derive that
\[
\begin{aligned}
    \1_{\{\varphi > u + C_2 \delta +  \epsilon\}} \omega_{\varphi}^m \wedge \omega^{n-m} =& \1_{\{\varphi > u + C_2 \delta +  \epsilon\}} \left[ (1-\delta)\omega_v +\delta \omega_{\psi}\right]^m \wedge \omega^{n-m} \\
    =& \1_{\{\varphi > u + C_2 \delta +  \epsilon\}} \sum_{k=0}^m \binom{m}{k} \left[(1-\delta)\omega_v\right]^k \wedge (\delta \omega_{\psi})^{m-k} \wedge \omega^{n-m}\\
    \ge & \1_{\{ \varphi > u + C_2 \delta +  \epsilon\}} \delta^{m} (\omega_{\psi})^m \wedge \omega^{n-m} .
\end{aligned}
\]
If $ \|g_{\epsilon}\|_{\chi}$ is not small, then
the stability property follows trivially by adjusting $C$. Hence, we can assume that $\|g_{\epsilon}\|_{\chi} < \frac{s}{2^{n/m}}$. Choosing $\delta = \frac{2^{1/m} \|g_{\epsilon}\|_{\chi}^{1/n} }{s^{1/n}} $, we obtain that 
\[
\begin{aligned}
\1_{\{ \varphi > u + C_2 \delta + \epsilon\}} \delta^{m} (\omega_{\psi})^m \wedge \omega^{n-m}  
\ge& 2 \1 _{\{ \varphi > u + C_2 \delta + \epsilon\}}f \omega^n \\
=& 2 \1 _{\{ \varphi > u + C_2 \delta + \epsilon\}} \omega_u^m \wedge \omega^{n-m}.
\end{aligned}
\]
It follows from domination principle (Proposition~\ref{prop: DP}) that
\[
\varphi \le u + C_2\delta +\epsilon.
\]
Hence,
\[ \label{formule:stabes}
\begin{aligned}
(v-u )_{+}\le & C_3 \left(\frac{2^{1/m}}{s^{1/n}}\right)\|g_{\epsilon}\|_{\chi}^{1/n} +  \epsilon\\
\le & C_4 \|g_{\epsilon}\|_{\chi}^{1/n} +  \epsilon\\
\le & C_4 ( \epsilon)^{-1/n} \|(v-u)_{+} f^{n/m} \|^{1/n}_{\chi} +\epsilon.
\end{aligned}
\]
Optimizing the right hand side by choosing 
\[
\epsilon = \left(\frac{ \|C_4(v-u)_{+} f^{n/m} \|^{1/n}_{\chi}}{n} \right)^{\frac{n}{n+1}},
\]
we obtain
\[
(v-u)_{+} \le C \left( \|(v-u)_{+} f^{n/m} \|_{\chi}\right)^{\frac{1}{n+1}}.
\]
The conclusion follows by taking the supremum of the left-hand side.
\end{proof}

\begin{remark} \label{rem: Lpstability}
    When $\chi(t)= t^p/p$, with $p >1$, the Orlicz norm $ \|\cdot\|_{\chi}$ is simply the Lebesgue $L^p$-norm. Assume $f^{n/m} \in L^{p}$, for $p>1$. Let $\chi_q(t)= t^q/q$ for some $1<q<p$. Then $\psi \in \PSH(X,\omega)$ is a subsolution such that 
    \[
    (\omega + \ddc \psi)^{n}  \ge s'g \omega^n, \text{ where } g =\frac{\1_{\{u < v- B\epsilon \}}  f^{n/m}}{\| \1_{\{u < v- B\epsilon \}}  f^{n/m}\|_{q} } .
    \]
    Following the same procedure and applying the Chebyshev inequality to \[(v-u )_{+} \le C_4 \|(v-u )_{+} g_{\epsilon}\|_q^{1/n} +\epsilon,
    \]we obtain that
    \[
    (v-u )_{+} \le \frac{C'}{( \epsilon)^{\frac{p-q}{p q n}}} \|(v-u )_{+}\|^{\frac{p-q}{pqn}}_{1} \|f^{n/m}\|^{1/n}_p +  \epsilon.
    \]
    Choosing an optimal $\epsilon = C'_1 \|(v-u )_{+}\|^{\frac{p-q}{  (npq+p-q)}}_{1} \|f^{n/m}\|^{\frac{pq}{(npq+p-q)}}_p$, we deduce a stability estimate \[
    (v-u )_{+} \le  C^{''} \|(v-u )_{+}\|_1^{\frac{p-q}{(npq+p-q)}} \left\|f^{\frac{n}{m} }\right\|^{\frac{pq}{(npq+p-q)}}_p,
    \]
    which slightly generalize the stability estimate in \cite{Ko_odziej_2016}.
\end{remark}
The following result is derived by applying the above theorem twice.
\begin{coro} \label{cor:stab}
  Suppose that $u,v \in \SH_m(X,\omega)$, normalized such that $\sup_X u=0$, $\sup_X v =0$, satisfy the complex Hessian equations:
  \[
  \omega_u^m \wedge \omega^{n-m} = e^{\lambda u}f \omega^n, \omega_v^m \wedge \omega^{n-m} = e^{\lambda v}g \omega^n,
  \]
  where $0 < f^{\frac{n}{m} },g^{\frac{n}{m} } \in L^{\chi}$, and $\lambda\ge 0$. Then,
  \[
  \| u-v\|_{\infty} \le C\left( \| (v-u)_+f^{\frac{n}{m} }\|_{\chi}^{\frac{1}{n+1}} + \| (u-v)_+g^{\frac{n}{m} }\|_{\chi}^{\frac{1}{n+1}} \right),
  \]
  where the constant $C$ depends on $ n, m, \omega$, $\|f^{\frac{n}{m} }\|_{\chi}$ and $\|g^{\frac{n}{m} }\|_{\chi}$.
\end{coro}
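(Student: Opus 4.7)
The corollary is meant to be a direct consequence of Theorem~\ref{Thm:stab} applied symmetrically in the two variables, so my plan is to set up the two applications carefully and then combine them. The main observation is that the hypothesis of Theorem~\ref{Thm:stab} requires the ``solution'' to be normalized with supremum $0$ and the ``comparison function'' merely to be non-positive, and both $u$ and $v$ satisfy these normalizations here.

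First, I would apply Theorem~\ref{Thm:stab} with $u$ playing the role of the solution: since $\sup_X u = 0$, $v \leq 0$, and $u$ solves $\omega_u^m \wedge \omega^{n-m} = e^{\lambda u} f \omega^n$ with $f^{n/m} \in L^\chi$ and $\chi$ satisfying Condition (K), the theorem yields
\[
\sup_X (v-u)_+ \leq C_1 \,\|(v-u)_+ f^{n/m}\|_{\chi}^{\frac{1}{n+1}},
\]
where $C_1$ depends on $n, m, \omega, \|f^{n/m}\|_\chi$, and $\sup_X|v|$. By Theorem~\hyperlink{ThmB}{B} (applied to $v$), $\sup_X|v|$ is controlled in terms of $n, m, \omega, \|g^{n/m}\|_\chi$, so $C_1$ may be absorbed into a constant depending only on the quantities named in the corollary's statement.

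Next, I would swap the roles of $u$ and $v$: since $\sup_X v = 0$, $u \leq 0$, and $v$ solves $\omega_v^m \wedge \omega^{n-m} = e^{\lambda v} g \omega^n$, Theorem~\ref{Thm:stab} gives
\[
\sup_X (u-v)_+ \leq C_2 \,\|(u-v)_+ g^{n/m}\|_{\chi}^{\frac{1}{n+1}},
\]
with $C_2$ controlled, after invoking Theorem~\hyperlink{ThmB}{B} for $u$, in terms of $n, m, \omega, \|f^{n/m}\|_\chi, \|g^{n/m}\|_\chi$.

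Finally, since $\|u - v\|_\infty = \max\bigl(\sup_X (u-v)_+, \sup_X (v-u)_+\bigr) \leq \sup_X (u-v)_+ + \sup_X (v-u)_+$, adding the two inequalities yields the desired estimate with $C = \max(C_1, C_2)$. There is no real obstacle here: the only thing to check is that the $L^\infty$-bounds on $u$ and $v$ needed to fix the constants in Theorem~\ref{Thm:stab} are themselves controlled by the Orlicz norms of $f^{n/m}$ and $g^{n/m}$, which is precisely the content of Theorem~\hyperlink{ThmB}{B}.
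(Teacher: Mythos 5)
Your proposal is correct and coincides with the paper's own (one-line) argument: the corollary is obtained precisely by applying Theorem~\ref{Thm:stab} twice with the roles of $u$ and $v$ exchanged and combining the two bounds. Your extra remark that the dependence of the constants on $\sup_X|v|$ (resp.\ $\sup_X|u|$) is absorbed via the $L^\infty$-estimate of Theorem B is exactly the right way to justify that $C$ depends only on $n,m,\omega,\|f^{n/m}\|_{\chi},\|g^{n/m}\|_{\chi}$.
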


\subsection{Continuity of solutions}
This subsection is devoted to the proof of \hyperlink{ThmC}{Theorem C}. We approximate $f$ by positive densities \[f_j= \min (f,j).\] It follows from the main theorem of \cite{Ko_odziej_2016} that for each $j$, there exist a continuous solution $u_j$ and a constant $c_j>0$ such that
\[
(\omega + \ddc u_j)^m \wedge \omega^{n-m} = c_j e^{\lambda u_j}f_j dV_X, \;\lambda\ge 0.
\]
We normalized $u_j$ such that $\sup_Xu_j=0$. Our first step is to bound the coefficients $c_j$ from above and below. This requires an inequality involving mixed Hessian measures.
It's well known that if ${\varphi}_j\in \SH_m(X,\omega)\cap \mathcal{C}^{\infty}(X)$ and $(\omega+\ddc {\varphi}_j)^m \wedge \omega^{n-m} = f_j dV_X$, then G\aa rding's inequality \cite{gar59} holds:
\[
(\omega +\ddc {\varphi}_1)\wedge\cdots \wedge (\omega+ \ddc {\varphi}_m)\wedge \omega^{n-m} \ge (f_1\cdots f_m)^{1/m}dV_X.
\]
In the case where $\varphi_j$'s are not smooth, we can apply an approximation argument to obtain the following inequality. 

\begin{prop} \label{prop:mixedHess}
    Let $u \in \SH_m(X,\omega)$ be bounded functions such that 
    \[
    (\omega + \ddc {\varphi})^m \wedge \omega^{n-m} \ge f\omega^n.
    \]
    Then we have that $(\omega + \ddc {\varphi}) \wedge \omega^{n-1} \ge f^{1/m} \omega^n$.
\end{prop}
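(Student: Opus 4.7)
The plan is to deduce the inequality from G\aa rding's (equivalently, Maclaurin's) inequality in the smooth setting, and then extend it to bounded $(\omega,m)$-subharmonic $\varphi$ by a decreasing smooth approximation, as indicated in the paragraph preceding the statement.

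For smooth $\varphi \in \SH_m(X,\omega)$, I would apply G\aa rding's polarized inequality to the $m$-tuple $(\omega+\ddc\varphi, \omega, \ldots, \omega)$, using that the zero function belongs to $\SH_m(X,\omega)$ with $\omega^m\wedge \omega^{n-m} = \omega^n$. This yields the pointwise bound
\[
(\omega + \ddc \varphi) \wedge \omega^{n-1} \;\ge\; \left(\frac{(\omega + \ddc \varphi)^m \wedge \omega^{n-m}}{\omega^n}\right)^{1/m}\omega^n.
\]
Combined with the assumption $(\omega + \ddc \varphi)^m \wedge \omega^{n-m} \ge f\omega^n$, this gives the proposition for smooth $\varphi$.

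For a bounded $\varphi \in \SH_m(X,\omega)$, I would take a decreasing sequence $\varphi_j \searrow \varphi$ of smooth $(\omega,m)$-subharmonic functions—whose existence is provided by \cite[Lemma 3.20]{Ko_odziej_2016}, already invoked in the proof of Lemma~\ref{Lem:density}—and apply the smooth inequality to each $\varphi_j$, obtaining
\[
(\omega + \ddc \varphi_j) \wedge \omega^{n-1} \;\ge\; \rho_j^{1/m}\,\omega^n, \qquad \rho_j := \frac{(\omega + \ddc \varphi_j)^m \wedge \omega^{n-m}}{\omega^n}.
\]

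The hard part is passing to the weak limit as $j\to\infty$. Continuity of the Hessian operators along decreasing sequences \cite{kolodziej2023complexhessianmeasuresrespect} yields the weak convergence of measures $(\omega+\ddc\varphi_j) \wedge \omega^{n-1} \to (\omega+\ddc\varphi) \wedge \omega^{n-1}$ and $\rho_j\omega^n \to (\omega+\ddc\varphi)^m \wedge \omega^{n-m} \ge f\omega^n$, but the concavity of $x\mapsto x^{1/m}$ obstructs a direct term-by-term passage on the right-hand side. To overcome this, I would test against an arbitrary nonnegative continuous function $\eta$, exploit the sublinear growth of $x\mapsto x^{1/m}$ (its recession is zero, so the singular part of the weak limit contributes nothing), and approximate $f$ from below by simple functions in order to reduce to Lebesgue density estimates that follow directly from the smooth pointwise inequality. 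Combined with the convergence on the left, this yields $\int_X \eta\,(\omega+\ddc\varphi)\wedge\omega^{n-1} \ge \int_X \eta\, f^{1/m}\,\omega^n$ for every such $\eta$, hence the claimed measure inequality.
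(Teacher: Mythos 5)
Your smooth-case step is the same as the paper's (Gårding's inequality applied pointwise), but the limiting step you yourself flag as "the hard part" is a genuine gap, not a technicality. Once you replace $\varphi$ by decreasing smooth approximants $\varphi_j$, the only information available on $\rho_j\,\omega^n=(\omega+\ddc\varphi_j)^m\wedge\omega^{n-m}$ is weak convergence to $(\omega+\ddc\varphi)^m\wedge\omega^{n-m}\ge f\,\omega^n$; the $\varphi_j$ need not satisfy $\rho_j\ge f$ anywhere. To conclude you would need the weak \emph{lower} semicontinuity
\[
\liminf_{j\to\infty}\int_X\eta\,\rho_j^{1/m}\,\omega^n\;\ge\;\int_X\eta\,f^{1/m}\,\omega^n,
\]
and this is false for concave integrands under weak convergence: densities oscillating between $0$ and $2$ on sets of measure $1/2$ converge weakly to the constant density $1$, while $\int\rho_j^{1/2}\to 2^{-1/2}<1$. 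Your recession-function/singular-part remark yields the opposite, upper-semicontinuity, bound $\limsup_j\int\eta\,\rho_j^{1/m}\le\int\eta\,\bigl(d\mu_{ac}/\omega^n\bigr)^{1/m}\omega^n$, which is of no use here, and approximating $f$ from below by simple functions does not help either: the obstruction is oscillation of the approximating densities $\rho_j$, not lack of regularity of $f$, so there is no "Lebesgue density estimate" to fall back on.

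The paper closes exactly this gap by approximating the equation rather than the potential: it chooses smooth bounded $g_j\to f$ and smooth $(\omega,m)$-subharmonic $v_j\to\varphi$, solves $(\omega+\ddc\varphi_j)^m\wedge\omega^{n-m}=e^{\varphi_j-v_j}g_j\,\omega^n$ with smooth $\varphi_j$ via \cite{Sze18}, and applies Gårding to get $(\omega+\ddc\varphi_j)\wedge\omega^{n-1}\ge e^{(\varphi_j-v_j)/m}g_j^{1/m}\omega^n$. Here the right-hand side has explicitly known densities converging to $f^{1/m}$, and the stability theorem \cite[Theorem 3.19]{Ko_odziej_2016} combined with the domination principle (Proposition~\ref{prop: DP}) gives uniform convergence $\varphi_j\to\varphi$, so both sides of the inequality pass to the limit without any semicontinuity issue. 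To repair your argument you would need a comparable device that pins down the densities of the approximating Hessian measures, rather than relying on weak convergence of $(\omega+\ddc\varphi_j)^m\wedge\omega^{n-m}$ alone.
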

\begin{proof}
    Let $\{g_j\}$ be a sequence of bounded, positive and smooth functions converging to $f$. Let $v_j$ be a sequence of smooth, $(\omega,m)$-subharmonic functions converging to ${\varphi}$. We consider the following Hessian equation:
    \[
    (\omega + \ddc {\varphi}_j)^m \wedge \omega^{n-m} = e^{{\varphi}_j - v_j}g_j \omega^n.
    \]
    The existence of smooth solution ${\varphi}_j$ follows from \cite{Sze18}. Applying G\aa rding's inequality \cite{gar59} yields
    \[(\omega + \ddc {\varphi}_j) \wedge \omega^{n-1} \ge e^{\frac{{\varphi}_j - v_j}{m}}g_j^{1/m} \omega^n.\]
    Moreover, we deduce from \cite[Theorem 3.19]{Ko_odziej_2016} that ${\varphi}_j$ converges uniformly to $\widetilde{{\varphi}} \in \SH_m(X,\omega)$. Passing to the limit as $j \to +\infty$, we obtain
    \[
    (\omega + \ddc \widetilde{u} )^m \wedge \omega^{n-m} = e^{\widetilde{{\varphi}} -{\varphi}}f \omega^n.
    \]
   The domination principle (Proposition~\ref{prop: DP}) then implies $\widetilde{{\varphi}}={\varphi}$. Consequently, we conclude that
    \[
    (\omega +\ddc {\varphi}) \wedge \omega^{n-1} \ge f^{1/m} \omega^n.
    \]
\end{proof}
We now turn to obtaining uniform bounds for $c_j$.
\begin{lem} \label{lem:bdd cj}
 The sequence of constant $\{ c_j\}$ is uniformly bounded. In particular, the Orlicz norm $\| c_j f_j\|_{\chi}$ is uniformly bounded from above.
\end{lem}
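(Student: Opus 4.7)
The strategy is to integrate a mixed Hessian inequality over $X$, apply Hermitian integration by parts on the left, and combine a Chebyshev estimate with absolute continuity of $f^{1/m}\, dV_X$ on the right. Applying Proposition~\ref{prop:mixedHess} to $u_j$ with density $c_j e^{\lambda u_j} f_j$ gives
\[
(\omega + dd^c u_j) \wedge \omega^{n-1} \ge c_j^{1/m} e^{\lambda u_j/m} f_j^{1/m}\, \omega^n.
\]
Integrating over $X$ and using the Stokes identity $\int_X dd^c u_j \wedge \omega^{n-1} = \int_X u_j\, dd^c \omega^{n-1}$ (valid since $u_j$ is continuous and $X$ is compact without boundary), the left-hand side is controlled by $\int_X \omega^n + C'\int_X |u_j|\, dV_X \le C_1$, with $C'$ depending only on $\omega$; here $\sup_X u_j = 0$ and Lemma~\ref{lem:comp} bound $\int_X |u_j|\, dV_X$.

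The principal task is to bound $\int_X e^{\lambda u_j/m} f_j^{1/m}\, dV_X$ uniformly from below. We may assume $f$ is not identically zero (otherwise the lemma is vacuous). Chebyshev combined with Lemma~\ref{lem:comp} yields $\Vol(\{u_j < -t\}) \le A/t$. Since $f^{n/m} \in L^\chi \subset L^1$, we have $f \in L^{n/m}$, hence $f^{1/m} \in L^n \subset L^1$, and the measure $f^{1/m}\, dV_X$ is absolutely continuous: for every $\varepsilon > 0$ there is $\delta > 0$ with $\Vol(E) < \delta$ implying $\int_E f^{1/m}\, dV_X < \varepsilon$. Fix $t$ with $A/t < \delta$; the monotone convergence $f_j \nearrow f$ gives $\int_X f_j^{1/m}\, dV_X \ge \tfrac{1}{2}\int_X f^{1/m}\, dV_X$ for $j$ large, so choosing $\varepsilon = \tfrac{1}{4}\int_X f^{1/m}\, dV_X$ yields $\int_{\{u_j \ge -t\}} f_j^{1/m}\, dV_X \ge \tfrac{1}{4}\int_X f^{1/m}\, dV_X$, whence
\[
\int_X e^{\lambda u_j/m} f_j^{1/m}\, dV_X \ge e^{-\lambda t/m}\cdot \tfrac{1}{4}\int_X f^{1/m}\, dV_X =: c_0 > 0.
\]

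Combining the two displays gives $c_j^{1/m} \le C_1/c_0$ uniformly for $j$ large, so $\sup_j c_j < \infty$ (the finitely many initial indices contribute specific finite values). The Orlicz norm estimate then follows by monotonicity of $\|\cdot\|_\chi$: since $f_j \le f$ pointwise, $\|(c_j f_j)^{n/m}\|_\chi = c_j^{n/m}\|f_j^{n/m}\|_\chi \le (\sup_k c_k)^{n/m}\|f^{n/m}\|_\chi$. The main obstacle is the uniform lower bound on $\int_X e^{\lambda u_j/m} f_j^{1/m}\, dV_X$: while each ingredient (Chebyshev, absolute continuity, monotone convergence of $f_j$) is classical, they must be threaded carefully because both the weight $e^{\lambda u_j/m}$ and the density $f_j^{1/m}$ depend on $j$. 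A Skoda-type exponential integrability for $(\omega,m)$-subharmonic functions would sharpen the decay rate, but is not needed; the elementary $L^1$ bound of Lemma~\ref{lem:comp} already suffices.
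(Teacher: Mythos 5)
Your upper-bound argument is essentially the paper's: the mixed-Hessian inequality of Proposition~\ref{prop:mixedHess}, the Stokes identity $\int_X dd^c u_j\wedge\omega^{n-1}=\int_X u_j\, dd^c(\omega^{n-1})$, and the $L^1$-compactness of Lemma~\ref{lem:comp}. On one point you are in fact more careful than the paper: since the equation carries the factor $e^{\lambda u_j}\le 1$, it cannot simply be discarded on the right-hand side (the inequality would go the wrong way for $\lambda>0$), and your Chebyshev/absolute-continuity argument on $\{u_j\ge -t\}$ handles this correctly, whereas the paper's displayed inequality silently drops the factor. The homogeneity/monotonicity argument for the Orlicz norm is fine (modulo the harmless discrepancy between $\|c_jf_j\|_\chi$ and $\|(c_jf_j)^{n/m}\|_\chi$, which is what is actually needed for Theorem~B).

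There is, however, a genuine omission relative to the paper's proof: you never show that $\{c_j\}$ is bounded away from $0$. The paper proves this as the second half of the same lemma, and it is not decorative: the limiting constant $c$ in Theorem~C must be strictly positive, and without a uniform lower bound on $c_j$ the limiting equation could degenerate to $c=0$. The missing half is short with the tools already at hand: by the Subsolution Lemma (Lemma~\ref{lem:subsol}) applied to $g_j=f_j^{n/m}$ there is $-1\le v_j\le 0$ with
\[
(\omega+dd^c v_j)^n \ \ge\ \frac{s}{\|f_j^{n/m}\|_\chi}\, f_j^{n/m}\, dV_X,
\]
and $\|f_j^{n/m}\|_\chi\le\|f^{n/m}\|_\chi$, so the constant $\delta_j := s/\|f_j^{n/m}\|_\chi$ is uniformly positive; the mixed Monge--Amp\`ere inequality \cite[Lemma 1.9]{NG16} then gives $(\omega+dd^c v_j)^m\wedge\omega^{n-m}\ge \delta_j^{m/n} f_j\,\omega^n$, and comparing with $(\omega+dd^c u_j)^m\wedge\omega^{n-m}=c_j e^{\lambda u_j}f_j\,dV_X\le c_j f_j\,dV_X$, the domination principle (Proposition~\ref{prop: DP}, via its corollary that the comparison constant must be $\ge 1$) forces $c_j\ge \delta_j^{m/n}$ up to a uniform volume-ratio constant. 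You should add this step (the paper also notes in passing that the same principle gives uniqueness of each $c_j$); as written, your proposal establishes only the part of the lemma needed for the $L^\infty$-estimate, not the part needed for the positivity of $c$ in Theorem~C.
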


\begin{proof}
    We first bound $c_j$ from above. Fix $\delta_0 >0$ such that $\delta_0 \le \omega^n /dV_X$. Proposition~\ref{prop:mixedHess} implies that for each $j$,
    \[
    (\omega + \ddc u_j) \wedge \omega^{n-1} \ge c_j^{1/m} f_j^{1/m}  \omega^n \ge \delta_0 c_j^{1/m} f_j^{1/m} dV_X.
    \]
    Thus, by taking the integral of both sides:
    \[
     \delta_0 c_j^{1/m} \int_X f_j^{1/m} dV_X \le \int_X(\omega+ \ddc u_j) \wedge \omega^{n-1}.
    \]
    Note that $u_j \in \SH_m(X, \omega) \subset \SH_1(X, \omega)$, and the sequence $(u_j)$ is relatively compact in $L^1$. Moreover, we have $\int_X f_j^{1/m} \omega^n \rightarrow \int_X f^{1/m} \omega^n >0$ as $j \rightarrow +\infty$. It follows that there exist $j_0$ large enough such that for each $j > j_0$, the following inequality holds:
    \[
    c_j^{1/m} \le \frac{2}{\delta_0 \int_X f^{1/m} dV_X} \int_X (\omega+ \ddc u_j)\wedge \omega^{n-1}.
    \]
    It suffices to bound the right hand side. Indeed, it follows from Stokes' theorem that
    \[
    \int_X (\omega+ \ddc u_j)\wedge \omega^{n-1} = \int_X \omega^n + \int_X u_j \ddc(\omega^{n-1})  \le \int_X \omega^n + B\int_X |u_j |\omega^{n}.
    \] 
    By the $L^1$-compactness result for $(\omega,1)$-subharmonic functions (Lemma~\ref{lem:comp}), we conclude that the right hand side above is bounded. Thus $c_j$ is bounded from above.

    We next show that $\{c_j\}$ is bounded away from $0$. Let $g_j= f_j^{n/m}\in L^{\chi}$. By the subsolution Lemma (Lemma~\ref{lem:subsol}), there exists a uniform constant $\delta>0$, such that 
    \[(\omega+\ddc v_j)^n  \ge \delta_j g_j  dV_X.
    \]
    Then, it follows from mixed Monge--Amp\`ere inequality \cite[Lemma 1.9]{NG16} that
    $$
     \omega_{v_j}^m \wedge \omega^{n-m} \ge \delta_j^{m/n} g_j^{m/n} \omega^n  \ge  \delta_j^{m/n} f_j \omega^n.
    $$
    The domination principle (Proposition~\ref{prop: DP}) yields that 
    \[
    \delta_j^{m/n} \le c_j.
    \]
    We thus obtain a uniform lower bound for $c_j$'s, since $\delta_j$'s are strictly positive, uniform constants. The uniqueness of $c_j$ for each $f_j$ follows directly from domination principle (Proposition~\ref{prop: DP}).
    \end{proof}
\emph{Uniform bound of $u_j$.}
It follows from \hyperlink{ThmB}{Theorem B} that $\{ u_j\}$ have uniform bound:
\[
-C \le u_j \le 0. 
\]
By extracting, we can assume that $\{ u_j \}$ converges in $L^1(X)$ to a bounded $(\omega,m)$-subharmonic function $u$, as $j \rightarrow +\infty$. 

\emph{Existence of solutions.} We now show that, by construction, the limit $u$ is a solution to the Hessian equation
\[
(\omega+\ddc u)^m \wedge \omega^{n-m} = ce^{\lambda u}f dV_X, \lambda \ge 0.
\]
We first consider the simpler case in which $\lambda>0$.
    \begin{thm} \label{Thm: exis bdd sol 0}
Let $\lambda>0$. Then $u \in \SH_m(X,\omega) \cap L^{\infty}(X)$ is the unique solution such that
\[
(\omega + \ddc u )^m \wedge \omega^{n-m} = e^{\lambda u} f dV_X.
\]
Moreover $u$ is uniformly bounded.
\end{thm}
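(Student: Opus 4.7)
The case $\lambda > 0$ is simpler than $\lambda = 0$ because the multiplicative constant $c_j$ can be absorbed into the solution by translation. I would set $\tilde u_j := u_j + \lambda^{-1}\log c_j$. Since $\omega + dd^c \tilde u_j = \omega + dd^c u_j$ and $c_j e^{\lambda u_j} = e^{\lambda \tilde u_j}$, this $\tilde u_j$ satisfies the constant-free equation
\[
(\omega + dd^c \tilde u_j)^m \wedge \omega^{n-m} = e^{\lambda \tilde u_j} f_j \, dV_X.
\]
By \hyperlink{ThmB}{Theorem B} and Lemma~\ref{lem:bdd cj}, both $u_j$ and $\log c_j$ are uniformly bounded, so $\{\tilde u_j\}_j$ is uniformly bounded in $L^\infty(X)$. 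The candidate solution will be the monotone limit of this sequence.

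I would then establish the monotonicity $\tilde u_j \ge \tilde u_{j+1}$. Since $f_j \le f_{j+1}$,
\[
e^{-\lambda \tilde u_{j+1}} \omega_{\tilde u_{j+1}}^m \wedge \omega^{n-m} = f_{j+1}\, dV_X \ge f_j\, dV_X = e^{-\lambda \tilde u_j} \omega_{\tilde u_j}^m \wedge \omega^{n-m},
\]
and the Corollary to the domination principle stated right after Proposition~\ref{prop: DP}, applied with $v = \tilde u_{j+1}$ and $u = \tilde u_j$, yields $\tilde u_j \ge \tilde u_{j+1}$. Combined with the uniform lower bound, this forces $\tilde u_j \searrow u$ pointwise for some bounded $(\omega,m)$-subharmonic function $u$.

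Finally I would pass to the limit. Continuity of the complex Hessian operator along decreasing sequences of bounded $(\omega,m)$-subharmonic functions on Hermitian manifolds \cite{kolodziej2023complexhessianmeasuresrespect} gives $\omega_{\tilde u_j}^m \wedge \omega^{n-m} \to \omega_u^m \wedge \omega^{n-m}$ in the weak sense of measures. On the right-hand side, $e^{\lambda \tilde u_j} f_j \to e^{\lambda u} f$ pointwise a.e., dominated by $e^{\lambda\|\tilde u_1\|_\infty} f \in L^1(dV_X)$ (recall $f \in L^{n/m} \subset L^1$), so dominated convergence yields weak convergence of the corresponding measures. Thus $u$ solves the desired equation and is uniformly bounded. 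Uniqueness among bounded $(\omega,m)$-subharmonic solutions follows by applying the same Corollary in both directions to any two candidates with identical densities. The only nontrivial step is the monotonicity: it goes through because the weighted Corollary holds for bounded (not necessarily continuous) $(\omega,m)$-subharmonic functions, and it crucially relies on the strict positivity $\lambda > 0$.
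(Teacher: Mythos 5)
Your proposal is correct and follows essentially the same route as the paper: monotonicity of the approximating sequence via the domination principle (its weighted corollary), passage to the limit by continuity of the Hessian operator along decreasing sequences together with dominated convergence, and the uniform bound from Theorem B. You are in fact slightly more careful than the paper, which silently drops the constants $c_j$ and does not spell out uniqueness; your absorption $\tilde u_j = u_j + \lambda^{-1}\log c_j$ and the two-sided application of the corollary fill exactly those gaps.
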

\begin{proof}
    To simplify the notation, we assume that $\lambda=1$. Then each $u_j$ satisfies
    \[
    (\omega+ \ddc u_j)^m \wedge \omega^{n-m} = e^{u_j} f_j dV_X.
    \]
    From the domination principle (Proposition~\ref{prop: DP}), we deduce that $u_j$ decreases to some functions $u \in \SH_m(X,\omega)$. Passing to the limit, it follows from the continuity of Hessian operator along decreasing sequence (\cite[Section 5]{kolodziej2023complexhessianmeasuresrespect}) that 
    \[
    (\omega+ \ddc u)^m \wedge \omega^{n-m} = e^{ u} fdV_X.
    \]
    Applying \hyperlink{ThmB}{Theorem B}, we obtain a uniform bound for $u$.
\end{proof}
Monotonicity of the sequence $u_j$ is not guaranteed when $\lambda=0$. Our approach is to construct a supersolution and a subsolution and demonstrate that they coincide, as in \cite[Section 3]{GL21a} and \cite[Theorem 4.5]{BGL24}.
\begin{thm}[Existence of bounded solution] \label{Thm: exis bdd sol}
Let $f$, $u$ and the constant $c$ be as above. Then $u \in \SH_m(X,\omega) \cap L^{\infty}(X)$ is a solution to
\[
(\omega + \ddc u)^m \wedge \omega^{n-m} = c f dV_X.
\]
Moreover, $u$ is uniformly bounded.
\end{thm}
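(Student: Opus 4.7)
My plan is to upgrade the $L^1$-convergence $u_j \to u$ to uniform convergence using the stability estimate (Corollary~\ref{cor:stab}), and then pass to the limit in the Hessian equation by continuity of the complex Hessian operator under uniform convergence. After relabeling subsequences, I would assume $c_j \to c > 0$ (available by Lemma~\ref{lem:bdd cj}) and $u_j \to u$ both in $L^1(X)$ and almost everywhere on $X$; the limit $u$ is uniformly bounded by \hyperlink{ThmB}{Theorem B}. Set $M := \sup_j \|u_j\|_{\infty}$ and $c_+ := \sup_j c_j$, both finite.

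The core of the argument would be to show that $\{u_j\}$ is Cauchy in $L^{\infty}(X)$. Since each $u_j$ solves the Hessian equation with density $c_j f_j$ (here $\lambda = 0$) and is normalized by $\sup_X u_j = 0$, Corollary~\ref{cor:stab} yields
\[
\|u_j - u_k\|_{\infty} \le C\Big[\big\|(u_k - u_j)_+ (c_j f_j)^{n/m}\big\|_{\chi}^{1/(n+1)} + \big\|(u_j - u_k)_+ (c_k f_k)^{n/m}\big\|_{\chi}^{1/(n+1)}\Big],
\]
with $C$ uniform in $j,k$, since $\|(c_j f_j)^{n/m}\|_{\chi} \le c_+^{n/m}\|f^{n/m}\|_{\chi}$ and $M$ are both uniformly bounded.

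The decisive observation is that any weight $\chi$ satisfying \hyperref[Condition K]{Condition (K)} satisfies the $\Delta_2$-condition: from the asymptotic $\chi(t)\sim t(\log t)^n\,(h\circ\log\circ\log t)^n$ one verifies directly that $\chi(2t)/\chi(t)\to 2$ as $t\to+\infty$, so $\chi(2t)\le K\chi(t)$ for some uniform constant $K$. This makes dominated convergence available in the Orlicz norm: whenever $g_\ell\to 0$ almost everywhere and $|g_\ell|\le G\in L^{\chi}$, one has $\|g_\ell\|_{\chi}\to 0$. Apply this to $(u_k-u_j)_+(c_j f_j)^{n/m}$, which is dominated by $2Mc_+^{n/m}f^{n/m}\in L^{\chi}$ and tends to zero almost everywhere as $j,k\to+\infty$ along the subsequence on which $u_j\to u$ a.e.; a standard sub-subsequence argument then shows both Orlicz norms in the displayed bound go to zero, hence $u_j\to u$ uniformly on $X$.

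To conclude, I would pass to the limit in $\omega_{u_j}^m\wedge\omega^{n-m}=c_j f_j\,dV_X$: the left-hand side converges weakly to $\omega_u^m\wedge\omega^{n-m}$ by the continuity of the complex Hessian operator along uniformly convergent sequences (cf.\ \cite{kolodziej2023complexhessianmeasuresrespect}), while the right-hand side converges to $cf\,dV_X$ in $L^1$ by ordinary dominated convergence (since $c_j f_j\le c_+ f$). Uniform boundedness of $u$ is inherited from \hyperlink{ThmB}{Theorem B}. The main obstacle in this plan is the dominated convergence in $L^{\chi}$, whose validity crucially relies on the $\Delta_2$-property of $\chi$; the rest reduces to bookkeeping with the uniform stability constant and the a priori $L^\infty$-bound. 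An alternative route, perhaps closer to the one suggested by the authors, would be to construct a super-solution $\bar u:=(\limsup_j u_j)^*$ and an envelope-based sub-solution $\underline u$ (e.g.\ built from suitable $P_{\omega,m}(\inf_{k\ge j} u_k)$), apply Lemma~\ref{lem:subsol preserve} to the maxima/envelopes, and invoke the domination principle (Proposition~\ref{prop: DP}) to identify $\bar u=\underline u=u$; this would avoid the $\Delta_2$-condition but require a delicate handling of monotone convergence of Hessian measures.
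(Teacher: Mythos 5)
Your primary route breaks down at precisely the step you call decisive: the claim that every weight satisfying Condition~(K) obeys the $\Delta_2$-condition is false. Condition~(K) only requires $h$ to be increasing with $\int^{+\infty}h^{-1}(t)\,dt<+\infty$, so $h$ may grow arbitrarily fast. For instance $h(s)=\exp(\exp(\exp s))$ is admissible, and then $h(\log\log t)=e^{t}$, so $\chi(t)\asymp t(\log t)^n e^{nt}$ and $\chi(2t)/\chi(t)\to+\infty$. The point is that $\log\log(2t)-\log\log(t)=O(1/\log t)$, but a rapidly growing $h$ amplifies this small increment without bound; the asymptotic $\chi(2t)/\chi(t)\to 2$ holds only for slowly growing $h$. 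Without $\Delta_2$, dominated convergence in the Luxembourg norm is not available: with the paper's definition of $L^{\chi}$ one only knows $\int_X\chi(s_0 f^{n/m})\,dV_X<+\infty$ for \emph{some} $s_0>0$, so the dominating function $2Mc_+^{n/m}f^{n/m}$ need not lie in the Orlicz heart, and $g_\ell\to 0$ a.e.\ with $|g_\ell|\le G\in L^{\chi}$ does not imply $\|g_\ell\|_{\chi}\to 0$ (take $G$ with $\int_X\chi(G)<+\infty$ but $\int_X\chi(2G)=+\infty$ and $g_\ell=G\,\1_{\{G>\ell\}}$: then $\|g_\ell\|_{\chi}>1/2$ for all $\ell$). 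Consequently your Cauchy-in-$L^{\infty}$ argument, and with it the passage to the limit in the equation, is not justified for all weights covered by the statement; repairing it (e.g.\ by descending to a smaller Condition~(K) weight that is $\Delta_2$) is a nontrivial step you have not carried out, and it is not clear it can always be done since slowing $h$ down threatens the integrability of $h^{-1}$.

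The paper's proof avoids Orlicz-norm convergence altogether. It sets $\rho_j=(\sup_{k\ge j}u_k)^*$ and $v_j=P_{\omega,m}(\inf_{k\ge j}u_k)$, rewrites the equations for the $u_k$ with an artificial exponential factor so that Lemma~\ref{lem:subsol preserve} applies to the maxima and to the envelope of the minima, obtaining $\omega_{\rho_j}^m\wedge\omega^{n-m}\ge \inf_{k\ge j}(c_kf_k)\,dV_X$ and $\omega_{v_j}^m\wedge\omega^{n-m}\le e^{v_j-\inf_{k\ge j}u_k}\sup_{k\ge j}(c_kf_k)\,dV_X$, then passes to the limit along these monotone sequences and concludes $v=u$ from the domination principle (Proposition~\ref{prop: DP}). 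This is exactly the ``alternative route'' you mention in your last sentence, but there it is only a sketch: the exponential rewriting that makes Lemma~\ref{lem:subsol preserve}(2) applicable and feeds the domination principle is the actual content of the proof and is missing from your proposal. For what it is worth, your stability constant from Corollary~\ref{cor:stab} is indeed uniform in $j,k$, so in the special case of weights that do satisfy $\Delta_2$ (e.g.\ $h$ of polynomial growth) your route is correct and even yields uniform convergence directly; but as written it does not prove the theorem in the generality stated.
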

\begin{proof}
    We define
    \[
    \rho_j \coloneqq \left(\sup_{k \ge j} u_k \right)^*, \; v_j \coloneqq P_{\omega,m}\left(\inf_{k \ge j}u_k\right),
    \]
    where $P_{\omega,m}\left(\inf_{k \ge j}u_k\right)$ denotes the largest $(\omega,m)$-subharmonic function lying below $\inf_{k \ge j}u_k$. Observe that $\rho_j $ is a decreasing sequence that converges to $u$, and $v_j$ is an increasing sequence that converges to an $(\omega,m)$-subharmonic function $v$. In particular, by construction, we have $u \ge v$. We are going to show that $u=v$.

   Set $c_j^- f_j^- \coloneqq \inf_{k\ge j} c_k f_k$. Then it follows from  Lemma~\ref{lem:subsol preserve} that 
    \[
    (\omega + \ddc \rho_j)^m \wedge \omega^{n-m} \ge c_j^- f_j^- dV_X.
    \]
    Then we deduce from the continuity of Hessian operator along monotone sequences (see \cite[Lemma 5.1]{kolodziej2023complexhessianmeasuresrespect}) that 
    \begin{equation} \label{eqn:supersol}
    (\omega + \ddc u)^m \wedge \omega^{n-m} \ge c f dV_X.
     \end{equation}
    On the other hand, we write $(\omega+ \ddc v_j)^{m}\wedge \omega^{n-m} = e^{v_j - v_j}c_j f_j dV_X$. Define $c_j^+ f_j^+ \coloneqq \sup_{k\ge j} c_k f_k$. It also follows from Lemma~\ref{lem:subsol preserve} that 
    \[
    (\omega + \ddc v_j)^m \wedge\omega^{n-m} \le e^{v_j - \inf_{k \ge j}u_k} c_j^{+} f_j^+ dV_X.
    \]
    Let $j \to +\infty$. Together with \eqref{eqn:supersol} we deduce that 
    \[
    (\omega+ \ddc v)^m \wedge \omega^{n-m} \le e^{v- u}cf dV_X \le e^{v-u} (\omega+ \ddc u)^m \wedge \omega^{n-m} .
    \]
    By the domination principle (Proposition~\ref{prop: DP}), we conclude that $v \ge u$. Hence, $v = u$ is a bounded solution to the complex Hessian equation:
    \[
    (\omega + \ddc u)^m \wedge \omega^{n-m} = c f dV_X.
    \]
    The uniform bound for $u$ follows from \hyperlink{ThmB}{Theorem B}.
\end{proof}

\begin{proof}[Proof of \hyperlink{ThmC}{Theorem C}]

\emph{Existence of continuous solutions.}  We now prove that the sequence $\{ u_j \}$ converges uniformly to $u $ as $j \rightarrow +\infty$. We recall that when $\lambda >0$, the $L^{\infty}$-estimate implies that $e^{\lambda u }f \le f$, allowing us to reduce to the case where $\lambda =0$. Fix $j_0$ large enough. For each $j>j_0$, we infer by Theorem~\ref{Thm:stab} that\[
(u_j - u)_+ \le C_1  \left\|(u_j - u)_+f^{\frac{n}{m} }  \right\|_{\chi} ^{\frac{1}{n+1}}, \; (u - u_j )_+ \le C_1 \left\|(u - u_j)_+f_j^{\frac{n}{m} }  \right\|_{\chi} ^{\frac{1}{n+1}}.
\]
We are going to show that $\left\|(u_j - u)_+f ^{\frac{n}{m} }  \right\|_{\chi} \rightarrow 0$ as $j \rightarrow +\infty$. Fixing $s >0$, we proceed to compute the corresponding integrability.
\small
\[
\begin{aligned}
&\int_X \chi \left( \frac{(u_j-u)_+f^{\frac{n}{m} }}{s} \right) dV_X 
\\= & \int_X  \left( \frac{(u_j-u)_+f^{\frac{n}{m} }}{s} \right) \log^n \left( 1+\frac{(u_j-u)_+f^{\frac{n}{m} } }{s} \right) \left(h \circ \log \circ \log \left( 3+\frac{(u_j-u)_+f^{\frac{n}{m} }}{s} \right)\right)^ndV_X.
\end{aligned}
\]
As $u$ and $u_j$'s are uniformly bounded, it follows that $(u_j -u)+ \le C_0$ for some uniform constant $C_0$, and since $h$ is increasing, we deduce that
\begin{multline*}
\int_X \chi \left( \frac{(u_j-u)_+f^{\frac{n}{m} }}{s} \right) dV_X \le  \int_X  \left( \frac{(u_j - u)_+f^{\frac{n}{m} }}{s} \right) \log^n \left( 1+\frac{C_0 f^{\frac{n}{m} }}{s} \right) \\ \cdot \left(h \circ \log \circ \log \left( 3+\frac{C_0f^{\frac{n}{m} }}{s} \right)\right)^n dV_X.
\end{multline*}

By the dominated convergence theorem, we conclude that \[
\left\|\frac{(u_j - u)_+ f^{\frac{n}{m} }}{s} \right\|_{\chi} \rightarrow 0 \text{, for all } 0<s<+\infty,
\] 
and consequently \[
\|(u_j - u)_+f^{\frac{n}{m} } \|_{\chi} ^{\frac{1}{n+1}} \rightarrow 0.\]
Applying the same argument, we obtain that $\left\|(u - u_j)_+f_j^{\frac{n}{m} }  \right\|_{\chi} ^{\frac{1}{n+1}} \le \left\|(u - u_j)_+f^{\frac{n}{m} }  \right\|_{\chi} ^{\frac{1}{n+1}} \rightarrow 0$.
Therefore, $ \{u_j\}_{j=1}^{\infty}$ is a Cauchy sequence in $\SH_m(X,\omega)\cap \mathcal{C}^0(X)$. It follows that the bounded solution $u$ to the complex Hessian equation
\[
(\omega + \ddc u )^m \wedge \omega^{n-m} = c e^{\lambda u} f \omega^n, \text{ where }f \in L^{\chi} ,
\]
is continuous.

\emph{Continuity of bounded solutions.} Assume now that $v \in \SH_m(X,\omega)$ is another bounded solution to the complex Hessian equation
\[
(\omega + \ddc v)^m \wedge \omega^{n-m} = c f dV_X.
\]
Then $v$ is continuous. Indeed, the uniform bound for $v$ ensures that $\widetilde{f} = e^{-v}f \in L^{\chi}$. We deduce from the previous argument that there exist $\widetilde{u} \in \SH_m(X,\omega)\cap \mathcal{C}^0(X)$ solving
\[
(\omega + \ddc \widetilde{u})^m \wedge \omega^{n-m} = e^{\widetilde{u}}\widetilde{f}dV_X.
\]
It follows that 
\[
(\omega + \ddc \widetilde{u})^m \wedge \omega^{n-m} = e^{\widetilde{u}-v} f dV_X =  e^{\widetilde{u}-v} (\omega + \ddc v)^m \wedge \omega^{n-m}.
\]
Applying the Domination Principle (Proposition~\ref{prop: DP}) yields that $v = \widetilde{u}$. Therefore, $v \in \mathcal{C}^0(X)$, as claimed.
\end{proof}

\bibliographystyle{alpha}
\bibliography{ref}

\newcommand{\etalchar}[1]{$^{#1}$}
\begin{thebibliography}{GPTW21}

\bibitem[{\AA}C25]{AC24}
Per {\AA}hag and Rafał Czyż.
\newblock Continuity of solutions to complex {Hessian} equations via the dinew–{Kołodziej} estimate.
\newblock {\em Ann. Fenn. Math.}, 50:201--214, 2025.

\bibitem[{\AA}CK{\etalchar{+}}09]{ACK+}
Per {\AA}hag, Urban Cegrell, S{\l}awomir Ko{\l}odziej, Hoang-Hiep Ph{\d{a}}m, and Ahmed Zeriahi.
\newblock Partial pluricomplex energy and integrability exponents of plurisubharmonic functions.
\newblock {\em Adv. Math.}, 222:2036--2058, 01 2009.

\bibitem[Ber19]{Berman2019}
Robert~J. Berman.
\newblock From {Monge--Amp{\`e}re} equations to envelopes and geodesic rays in the zero temperature limit.
\newblock {\em Math. Z.}, 291:365--394, 2019.

\bibitem[BGL25]{BGL24}
S{\'e}bastien Boucksom, Vincent Guedj, and Chinh~H. Lu.
\newblock Volumes of bott--chern classes.
\newblock {\em Peking Math. J.}, 06 2025.

\bibitem[B{\l}o05]{Blocki05}
Zbigniew B{\l}ocki.
\newblock Weak solutions to the complex {H}essian equation.
\newblock {\em Ann. Inst. Fourier (Grenoble)}, 55(5):1735--1756, 2005.

\bibitem[B{\l}o12]{Błocki2012Lecture}
Zbigniew B{\l}ocki.
\newblock {\em The Calabi--Yau Theorem}, pages 201--227.
\newblock Springer Berlin Heidelberg, Berlin, Heidelberg, 2012.

\bibitem[BT76]{BedfordTaylor1976}
Eric Bedford and B.~A. Taylor.
\newblock The dirichlet problem for a complex {Monge-Ampère} equation.
\newblock {\em Invent. Math.}, 37:1--44, 1976.

\bibitem[BT82]{BedfordTaylor1982}
Eric Bedford and B.~A. Taylor.
\newblock A new capacity for plurisubharmonic functions.
\newblock {\em Acta Math.}, 149:1--40, 1982.

\bibitem[CC21]{chencheng2021cscK1}
Xiuxiong Chen and Jingrui Cheng.
\newblock On the constant scalar curvature kähler metrics (i)—a priori estimates.
\newblock {\em J. Amer. Math. Soc.}, 34(4):909--936, 2021.

\bibitem[Ceg98]{Cegrell1998}
Urban Cegrell.
\newblock Pluricomplex energy.
\newblock {\em Acta Math.}, 180(2):187--217, 1998.

\bibitem[Ceg04]{Cegrell04}
Urban Cegrell.
\newblock The general definition of the complex {Monge-Amp\`ere} operator.
\newblock {\em Ann. Inst. Fourier (Grenoble)}, 54(1):159--179, 2004.

\bibitem[Cha16]{Cha16}
Mohamad Charabati.
\newblock Modulus of continuity of solutions to complex {Hessian} equations.
\newblock {\em Internat. J. Math.}, 27(01):1650003, 2016.

\bibitem[CM21]{ChuMcCleerey}
Jianchun Chu and Nicholas McCleerey.
\newblock Fully non-linear degenerate elliptic equations in complex geometry.
\newblock {\em J. Funct. Anal.}, 281(9):109176, 2021.

\bibitem[CZ19]{ChuZhou2019}
Jiayu Chu and Ben Zhou.
\newblock Optimal regularity of plurisubharmonic envelopes on compact hermitian manifolds.
\newblock {\em Sci. China Math.}, 62(3):371--380, 2019.

\bibitem[Dar19]{Dar_survay}
Tamas Darvas.
\newblock Geometric pluripotential theory on {K}ähler manifolds.
\newblock {\em Advances in Complex Geometry}, 735:1--104, 2019.

\bibitem[DDG{\etalchar{+}}14]{DDG+14}
Jean-Pierre Demailly, Sławomir Dinew, Vincent Guedj, H.-H. Ph{\d{a}}m, Sławomir Kołodziej, and Ahmed Zeriahi.
\newblock Hölder continuous solutions to {Monge–Ampère} equations.
\newblock {\em J. Eur. Math. Soc. (JEMS)}, 16(4):619--647, 2014.

\bibitem[DDL21]{DDL21}
Tamás Darvas, Eleonora DiNezza, and Chinh~H. Lu.
\newblock Log-concavity of volume and complex {M}onge–{A}mpère equations with prescribed singularity.
\newblock {\em Math. Ann.}, 379:95–132, 02 2021.

\bibitem[DK14]{DK14}
S{\l}awomir Dinew and S{\l}awomir Ko{\l}odziej.
\newblock A priori estimates for complex {H}essian equations.
\newblock {\em Anal. PDE}, 7(1):227–244, 2014.

\bibitem[DP10]{DP2010}
Jean-Pierre Demailly and Nefton Pali.
\newblock Degenerate complex {Monge--Ampère} equations over compact kähler manifolds.
\newblock {\em Internat. J. Math.}, 21(3):357--405, 2010.

\bibitem[EGZ08]{EGZ08}
Philippe Eyssidieux, Vincent Guedj, and Ahmed Zeriahi.
\newblock A priori $\mathrm{L}^\infty$ estimates for degenerate complex {Monge–Ampère} equations.
\newblock {\em International Mathematics Research Notices}, 2008:rnn070, 01 2008.

\bibitem[G{\aa}r59]{gar59}
Lars G{\aa}rding.
\newblock An inequality for hyperbolic polynomials.
\newblock {\em Journal of Mathematics and Mechanics}, 8(6):957--965, 1959.

\bibitem[GGZ25]{GGZ25}
Vincent Guedj, Henri Guenancia, and Ahmed Zeriahi.
\newblock Diameter of kähler currents.
\newblock {\em J. Reine Angew. Math.}, 2025(820):115--152, 2025.

\bibitem[GL22]{GL2022}
Vincent Guedj and Chinh~H. Lu.
\newblock Quasi-plurisubharmonic envelopes 2: Bounds on {M}onge–-{A}mpère volumes.
\newblock {\em J. Algebraic Geom.}, page 688–713, 2022.

\bibitem[GL23]{GL21}
Vincent Guedj and Chinh~H. Lu.
\newblock Quasi-plurisubharmonic envelopes 3: Solving {M}onge–{A}mpère equations on hermitian manifolds.
\newblock {\em J. Reine Angew. Math.}, 2023(800):259--298, 2023.

\bibitem[GL24]{GL21a}
Vincent Guedj and Chinh~H. Lu.
\newblock Quasi-plurisubharmonic envelopes 1: uniform estimates on {K}ähler manifolds.
\newblock {\em J. Eur. Math. Soc. (JEMS)}, 27(3):1185–1208, 2024.

\bibitem[GL25a]{guedj2023degeneratecomplexhessianequations}
Vincent Guedj and Chinh~H. Lu.
\newblock Degenerate complex {Hessian} equations on compact {Hermitian} manifolds.
\newblock {\em Pure Appl. Math. Q.}, 21(3):1171--1194, 2025.

\bibitem[GL25b]{gl2025}
Vincent Guedj and Chinh~H. Lu.
\newblock Uniform estimates: from {Y}au to {K}o{\l}odziej, 2025.

\bibitem[GLZ19]{GLZ2019envelopes}
Vincent Guedj, Chinh~H. Lu, and Ahmed Zeriahi.
\newblock Plurisubharmonic envelopes and supersolutions.
\newblock {\em J. Differential Geom.}, 113(2):273--313, October 2019.

\bibitem[GN18]{GuNguyen2018}
Dongwei Gu and Ngoc~Cuong Nguyen.
\newblock The {D}irichlet problem for a complex {Hessian} equation on compact {Hermitian} manifolds with boundary.
\newblock {\em Ann. Sc. Norm. Super. Pisa Cl. Sci. (5)}, XVIII:1189--1248, 2018.

\bibitem[GP24]{GP24}
Bin Guo and Duong Phong.
\newblock On {$L^\infty$} estimates for fully non-linear partial differential equations.
\newblock {\em Annals of Mathematics}, 200:365--398, 07 2024.

\bibitem[GPT23]{GPT23}
Bin Guo, Duong Phong, and Freid Tong.
\newblock On {$L^\infty$} estimates for complex {Monge-Ampère }equations.
\newblock {\em Annals of Mathematics}, 198:393--418, 07 2023.

\bibitem[GPTW21]{guo2021moduluscontinuitysolutionscomplex}
Bin Guo, Duong~H. Phong, Freid Tong, and Chuwen Wang.
\newblock On the modulus of continuity of solutions to complex {Monge-Amp\`ere} equations, 2021.

\bibitem[KN15]{KN15weaksol}
Slawomir Ko{\l}odziej and Ngoc~Cuong Nguyen.
\newblock Weak solutions to the complex {Monge-Amp\`ere} equation on hermitian manifolds.
\newblock {\em Contemp. Math.}, 644:141--158, 06 2015.

\bibitem[KN16]{Ko_odziej_2016}
S{\l}awomir Ko{\l}odziej and Ngoc~Cuong Nguyen.
\newblock Weak solutions of complex {H}essian equations on compact hermitian manifolds.
\newblock {\em Compos. Math.}, 152(11):2221–2248, 2016.

\bibitem[KN19]{KN2019Stability}
Sławomir Kołodziej and Ngoc~Cuong Nguyen.
\newblock Stability and regularity of solutions of the {Monge–Ampère }equation on hermitian manifolds.
\newblock {\em Adv. Math.}, 346:264--304, 2019.

\bibitem[KN21]{KN2021continuous}
S{\l}awomir Ko{\l}odziej and Ngoc~Cuong Nguyen.
\newblock Continuous solutions to {M}onge--{A}mp{\`e}re equations on hermitian manifolds for measures dominated by capacity.
\newblock {\em Calc. Var. Partial Differential Equations}, 60(3):93, 2021.

\bibitem[KN23]{kolodziej2023complexhessianmeasuresrespect}
S{\l}awomir Ko{\l}odziej and Ngoc~Cuong Nguyen.
\newblock Complex {H}essian measures with respect to a background hermitian form.
\newblock {\em Anal. PDE, to appear}, page~~, 2023.
\newblock Preprint available at \url{https://arxiv.org/abs/2308.10405}.

\bibitem[Ko{\l}98]{Kolodziej1998}
S{\l}awomir Ko{\l}odziej.
\newblock The complex {M}onge-{A}mpère equation.
\newblock {\em Acta Math.}, 180(1):69--117, 1998.

\bibitem[Ngu14]{Nguyen_2014}
Ngoc~Cuong Nguyen.
\newblock Hölder continuous solutions to complex hessian equations.
\newblock {\em Potential Anal.}, 41(3):887–902, 04 2014.

\bibitem[Ngu16]{NG16}
Ngoc~Cuong Nguyen.
\newblock The complex {M}onge–{A}mpère type equation on compact hermitian manifolds and applications.
\newblock {\em Adv. Math.}, 286:240--285, 2016.

\bibitem[Qia24]{qiao2024sharpmathrmlinftyestimatesfully}
Yuxiang Qiao.
\newblock Sharp $\mathrm{L}^\infty$ estimates for fully non-linear elliptic equations on compact complex manifolds, 2024.

\bibitem[Sz{\'e}18]{Sze18}
G{\'a}bor Sz{\'e}kelyhidi.
\newblock {Fully non-linear elliptic equations on compact Hermitian manifolds}.
\newblock {\em J. Differential Geom.}, 109(2):337 -- 378, 2018.

\bibitem[Tos18]{Tosatti_2018}
Valentino Tosatti.
\newblock Regularity of envelopes in {Kä}hler classes.
\newblock {\em Math. Res. Lett.}, 25(1):281–289, 2018.

\bibitem[TW15]{Tosatti_2015}
Valentino Tosatti and Ben Weinkove.
\newblock On the evolution of a hermitian metric by its {Chern-Ricci} form.
\newblock {\em J. Differential Geom.}, 99(1):125 -- 163, 01 2015.

\bibitem[Yau78]{Yau78}
Shing-Tung Yau.
\newblock {On the {R}icci curvature of a compact {K}ähler manifold and the complex {Monge--Amp\`ere} equation, I}.
\newblock {\em Comm. Pure Appl. Math.}, 31(3):339--411, 1978.

\end{thebibliography}

\end{document}